\newtheorem{theorem}{Theorem}[section]
\newtheorem{lemma}[theorem]{Lemma}
\newtheorem{proposition}[theorem]{Proposition}
\theoremstyle{definition}
\newtheorem{remark}[theorem]{Remark}
\newcommand{\inpr}[3][]{\ensuremath{\langle#2,#3\rangle_{#1}}}
\DeclareMathOperator{\tr}{trace}
\DeclareMathOperator{\Ab}{Ab}
\newcommand{\MQCnq}[2]{
\begin{tikzpicture}[scale=.36,>=Stealth,auto=right]%,every node/.style={circle,fill=blue!20,inner sep=1pt,draw}]
\tikzmath{% All math operations should be defined here or in the code with \pgfmathtruncatemacro{}{}, it is not possible perfomr operations inside a foreach for instance
\n=#1; %Vakue of $n$
\s=\n-1;
\q=#2;  % Value of $q$
\d=gcd(\n,\q);
}
\ifnum \d=1 % Check that gcd(\n,\q)=1
    \ifnum \q=1 % Draw the case $q=1$
        \foreach \i in {0,1,...,\s} {
            \node[circle,fill=blue!20,inner sep=1pt,font=\tiny,draw] (a\i) at ({3*cos(360/\n*\i)},{3*sin(360/\n*\i)}) {$\i$};};
        \foreach \i [evaluate=\i as \j using {int(mod(\i+\q,\n))}] in {0,...,\s} \draw[->,red, thick] (a\i) -- (a\j) node[pos=0.5,black] {$2$};
        \node at (0,-4.5) {$Q_{\rho_{\mathrm{Nat}}}(\mathbb{C}_{\n,\q})$}; % Print the name
    \else
        \ifnum \q=\s  % Draw the case $q=n-1$
            \foreach \i in {0,1,...,\s} {
            \node[circle,fill=blue!20,inner sep=1pt,font=\tiny,draw] (a\i) at ({3*cos(360/\n*\i)},{3*sin(360/\n*\i)}) {$\i$};};
            \foreach \i [evaluate=\i as \j using {int(mod(\i+\q,\n))}] in {0,...,\s} \draw[red, thick] (a\i) -- (a\j);
             \node at (0,-4.5) {$Q_{\rho_{\mathrm{Nat}}}(\mathbb{C}_{\n,\q})$}; % Print the name
        \else   % Draw the other cases
                \foreach \i in {0,1,...,\s} {
                \node[circle,fill=blue!20,inner sep=1pt,font=\tiny,draw] (a\i) at ({3*cos(360/\n*\i)},{3*sin(360/\n*\i)}) {$\i$};};
                \foreach \i [evaluate=\i as \j using {int(mod(\i+1,\n))}] in {0,...,\s} \draw[->,red, thick] (a\i) -- (a\j);
                \foreach \i [evaluate=\i as \j using {int(mod(\i+\q,\n))}] in {0,...,\s} \draw[->,blue, thick] (a\i) -- (a\j);
                \node at (0,-4.5) {$Q_{\rho_{\mathrm{Nat}}}(\mathbb{C}_{\n,\q})$}; % Print the name
        \fi
    \fi
\else
\node[red] at (0,0) {$gcd(n,q)\neq1$}; % Warning if $\gcd(n,q)\neq1$.
\fi
\end{tikzpicture}
}
\newcommand{\MQCn}[1]{
\begin{tikzpicture}[scale=.36,>=Stealth,auto=right]%,every node/.style={circle,fill=blue!20,inner sep=1pt,draw}]
\tikzmath{% All math operations should be defined here or in the code with \pgfmathtruncatemacro{}{}, it is not possible perfomr operations inside a foreach for instance
\n=#1; %Vakue of $n$
\s=\n-1;
\q=1;  % Value of $q$
}
        \foreach \i in {0,1,...,\s} {
            \node[circle,fill=blue!20,inner sep=1pt,font=\tiny,draw] (a\i) at ({3*cos(360/\n*\i)},{3*sin(360/\n*\i)}) {$\i$};};
        \foreach \i [evaluate=\i as \j using {int(mod(\i+\q,\n))}] in {0,...,\s} \draw[->,red, thick] (a\i) -- (a\j);
        \node at (0,-4.5) {$Q_{\beta_{1}}(\mathbf{C}_{\n})$}; % Print the name
\end{tikzpicture}
}
\title{McKay quivers of small finite subgroups of $GL(2,\mathbb{C})$}
\author{J.~L.~Cisneros-Molina}
\address{Unidad Cuernavaca, Instituto de Matem\'aticas, Universidad Nacional Aut\'onoma de M\'exico. Av.~Universidad s/n, Lomas de Chamilpa, Cuernavaca, Mexico}
\email{jlcisneros@im.unam.mx}
\author{M.~Tosun}
\address{Department of Mathematics, Galatasaray University\\  Ortak{\"o}y 34357,  Istanbul, Turkey}
\email{mtosun@gsu.edu.tr}
\thanks{The first author was partially supported by Tubitak 2221 grant. The second author was partially supported by UNAM-DGAPA-PREI scholarship.}
\begin{document}

\begin{abstract}
We explicitly compute the McKay quivers of small finite subgroups of $GL(2,\mathbb{C})$ relative to the natural representation, using character theory and the McKay quivers of
finite subgroups of $SU(2)$.
We present examples that shows the rich symmetry and combinatorial structure of these quivers.
We compare our results with the MacKay quivers computed by Auslander and Reiten in \cite{Auslander-Reiten:MQEDD}.
\end{abstract}

\maketitle

\section{Introduction}

McKay quivers connect several areas such as algebraic geometry, representation theory and mathematical physics.
In algebraic geometry they appear in the McKay correspondence \cite{McKay:GSFG,Steinberg:FSGSU2DDACE,Gonzalez-Sprinberg-Verdier:McKay,Artin-Verdier:RMORDP,Esnault-Knorrer:RMORDP,Knorrer:GRRRDP,Kapranov-Vasserot:KSDCHA,Crawley-Holland:NCDKS,Ito-Nakamura:HSSS,Ito-Nakamura:MCHS} and its generalizations \cite{Slodowy:SSSAG,Esnault:RMQSS,Wunram:RMQSS,Khan:RMMES,FernandezdB-Romano:RMNGSSDM,Bridgeland-etal:MCEC,Ito-Reid:MCFGSL3C,Reid:CM}.
In representation theory of algebras they appear in the study of the Auslander-Reiten quivers of Cohen-Macaulay modules \cite{Auslander:RSASS,Auslander-Reiten:MQEDD}.
In physics, they have been used  to understand string compactifications \cite{Lawrence-etal:CFTFD,Hanany-He:NAFGT,Feng-etal:DTNAOSM,Feng-etal:DTCGQD,Aspinwall-Plesser:DBDTMC} through the McKay correspondence.

McKay quivers were introduced by McKay in \cite{McKay:GSFG}, where he presented what it is known as the McKay correspondence: a relation between McKay quivers of finite subgroups $\Gamma$ of $SU(2)$ relative to the natural representation and the dual graph of the minimal resolution of the corresponding Kleinian singularity $S_\Gamma=\mathbb{C}^2/\Gamma$.
In this case, the underlying graphs of the McKay quivers are the extended Dynkin diagrams of types $\tilde{A}$, $\tilde{D}$ and $\tilde{E}$.

The goal of the present article is to construct the McKay quivers of small finite subgroups $G$ of $GL(2,\mathbb{C})$, for which the quotient $S_G=\mathbb{C}^2/G$ is a surface singularity.
In \cite{Auslander:RSASS} Auslander proved that the McKay quiver of such a group is isomorphic to the Auslander-Reiten quiver of the reflexive modules of the associated quotient singularity $S_G$, and in \cite{Auslander-Reiten:MQEDD}  to illustrate the main theorem, Auslander and Reiten give the combinatorial structure of these McKay quivers, using the description of the small finite subgroups of $GL(2,\mathbb{C})$ given by Brieskorn in \cite{Brieskorn:RSKF}, the proof is concise and no examples of the quivers were included.
In \cite{Arciniega-etal:CRMQSAPS} it was proved that the list of small finite subgroups of $GL(2,\mathbb{C})$ coincides with the list of non-trivial finite subgroups of $SO(4)$ acting freely and isometrically on the sphere $\mathbb{S}^3$. This latter list consists of the finite subgroups of $SU(2)$, two families of groups denoted $D_{2^k(2r+1)}$ and $P'_{8\cdot 3^k}$, and the direct product of any of these groups with a cyclic group of relatively prime order. These groups are given by presentations, this allowed the authors to write explicitly the irreducible representations of the groups $D_{2^k(2r+1)}$ and $P'_{8\cdot 3^k}$.
Here we construct the McKay quivers of the small finite subgroups of $GL(2,\mathbb{C})$ as follows:
\begin{enumerate}
 \item Using the irreducible representations of the groups $D_{2^k(2r+1)}$ and $P'_{8\cdot 3^k}$ given in \cite{Arciniega-etal:CRMQSAPS}, we compute their character tables and then the McKay quivers using character theory.
 \item We compute the McKay quivers of the rest of the small finite subgroups of $GL(2,\mathbb{C})$ using Theorem~\ref{thm:MQ.prod} which gives the McKay quiver of a direct product of groups from the McKay quivers of the factor groups.
\end{enumerate}
We illustrate examples of the different types of these McKay quivers.

In Section~\ref{sec:MQ} we define McKay quivers, we explain how they can be computed using character theory. We also state a result on connectivity and describe how to compute the McKay quiver of a direct product of groups from the McKay quivers of the factor groups.
In Section~\ref{sec:MC} we list the finite subgroups of $SU(2)$ and their McKay quivers.
In Section~\ref{sec:QS} we give the classification of the small finite subgroups of $GL(2,\mathbb{C})$.
In Section~\ref{sec:CT} we compute the conjugacy classes and the character tables of the families of groups $D_{2^k(2r+1)}$ and $P'_{8\cdot 3^k}$.
In Section~\ref{sec:MQ.all} we explicitly compute the McKay quivers of the small finite subgroups of $GL(2,\mathbb{C})$. Finally in Section~\ref{sec:AR} we compare our results with the McKay quivers given in \cite{Auslander-Reiten:MQEDD}.

\section{McKay quivers}\label{sec:MQ}

A finite group $G$ has a finite number of complex irreducible representations which is equal to the number of conjugacy classes of $G$ \cite[Theorem~15.3]{James-Liebeck:RCG}.
Let $\mathrm{Irr}(G)=\{\rho_0,\rho_1,\dots,\rho_r\}$ be the set of complex \textit{irreducible representations}\index{representation!irreducible}
of $G$, where $\rho_0$ denotes the \textit{trivial representation}\index{representation!trivial}.
The relation between the dimensions $n_i$ of the irreducible representations $\rho_i\colon G\to GL(n_i,\mathbb{C})$ and the order $|G|$ of the group $G$ is given by
\cite[Theorem~11.12]{James-Liebeck:RCG}
\begin{equation}\label{eq:irr.dim.ord}
\sum_{i=0}^r n_i^2=|G|.
\end{equation}

Let $\rho$ be a (possibly reducible) representation of $G$.
Consider the tensor products $\rho\otimes\rho_i$ for $i=0,\dots,r$,  by Maschke's theorem \cite[Theorems~8.1 \& 8.7]{James-Liebeck:RCG} they decompose as direct sum of irreducible representations\index{representation!irreducible}
\begin{equation}\label{eq:rho.rhoi}
\rho\otimes\rho_i=\bigoplus_{j=0}^r a_{ij}\rho_j,\qquad j=0,\dots,r,
\end{equation}
where $a_{ij}\in\mathbb{N}$ is the multiplicity of $\rho_j$ in $\rho\otimes\rho_i$. The \textit{McKay matrix} of $G$ relative to $\rho$ is defined by $A_\rho(G)=\{a_{ij}\}_{i,j=0}^r$.
With the information given by $A_\rho(G)$ we construct the \textit{McKay quiver of $G$ relative to $\rho$}\index{McKay quiver}, denoted by $Q_\rho(G)$, as follows:
associate a vertex to each irreducible representation $\rho_i$, and join the $i$-th vertex to the $j$-th vertex by $a_{ij}$ arrows. We take the convention that an undirected edge between two vertices,
represents a pair of arrows between those vertices pointing in opposite directions.

\subsection{Computing McKay quivers using character theory}\label{ssec:MQ.CT}

Recall that the \textit{character} $\chi_\rho\colon G\to\mathbb{C}$ of a representation $\rho\colon G\to GL(n,\mathbb{C})$ of $G$ is given by $\chi_\rho(g)=\tr(\rho(g))$.
For simplicity we denote the characters of the irreducible representations $\rho_i$ by $\chi_i$ and we call them \textit{irreducible characters}.
Representations of $G$ are characterized by their characters.
Let $\rho$ and $\sigma$ be representations of $G$ and $\chi_\rho$ and $\chi_\sigma$ their corresponding characters. Then $\rho$ and $\sigma$ are isomorphic if and only if $\chi_\rho=\chi_\sigma$
\cite[Theorem~14.21]{James-Liebeck:RCG}.
One can take the direct sum $\rho\oplus\sigma$ and tensor product $\rho\otimes\sigma$ of the representations $\rho$ and $\sigma$ and their characters are given, respectively, by the sum and product of the corresponding characters $\chi_\rho$ and $\chi_\sigma$, that is \cite[(7.10) \& Proposition~19.6]{James-Liebeck:RCG}
\begin{equation}\label{eq:sum.prod}
\chi_{\rho\oplus\sigma}(g)=\chi_\rho(g)+\chi_\sigma(g),\quad
\chi_{\rho\otimes\sigma}(g)=\chi_\rho(g)\chi_\sigma(g),\quad\text{for all $g\in G$.}
\end{equation}

There is an inner product of characters given by
\begin{equation}\label{eq:inprod}
\inpr[G]{\chi_\rho}{\chi_\sigma}=\frac{1}{|G|}\sum_{g\in G}\chi_\rho(g)\overline{\chi_\sigma(g)}.
\end{equation}
The characters of the irreducible representations form an orthonormal set with respect to the inner product \eqref{eq:inprod}, that is \cite[Theorem~14.12]{James-Liebeck:RCG}
\begin{equation}\label{eq:ortho}
\inpr[G]{\chi_i}{\chi_j}=\begin{cases}
                       1, & i=j,\\
                       0, & i\neq j.
                      \end{cases}
\end{equation}
Moreover, we have that a representation $\rho$ with character $\chi_\rho$ is irreducible if and only if $\inpr[G]{\chi_\rho}{\chi_\rho}=1$ \cite[Theorem~14.20]{James-Liebeck:RCG}.

Taking the character of the representation given in \eqref{eq:rho.rhoi} and using \eqref{eq:sum.prod} we get
\begin{equation*}
\chi_\rho\chi_i=\sum_{i=0}^r a_{ij}\chi_j,
\end{equation*}
and taking the inner product with the character $\chi_j$ by the orthogonality relations \eqref{eq:ortho} we obtain
\begin{equation}\label{eq:aij}
 a_{ij}=\inpr[G]{\chi_\rho\chi_i}{\chi_j}.
\end{equation}
Hence, in order to compute the McKay quiver $Q_\rho(G)$ of $G$ relative to the representation $\rho$, we only need the character $\chi_\rho$ of $\rho$ and the character table of $G$.
We associate a vertex to each irreducible character $\chi_i$, and join the $i$-th vertex to the $j$-th vertex by $a_{ij}$ arrows with $a_{ij}$ given by \eqref{eq:aij}.

\subsection{Some results of McKay quivers}

Here we present some results of McKay quivers that we shall use in the sequel.

\subsubsection{Connectivity}
The first result is about connectivity of the McKay quiver (see also \cite[Proposition~3.3 \& Proposition~3.10]{Browne:CPMQ}).

\begin{proposition}[{\cite[Proposition~1]{McKay:GSFG}}]\label{prop:connect}
The McKay quiver $Q_\rho(G)$ is connected if and only of $\rho$ is a faithful representation.
\end{proposition}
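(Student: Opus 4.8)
The plan is to translate connectivity of $Q_\rho(G)$ into a statement about which irreducible representations occur in tensor powers of $\rho$, and then to invoke the Burnside--Brauer theorem. The starting point is the observation that the McKay matrix $A_\rho(G)=\{a_{ij}\}$ records exactly the operation of tensoring with $\rho$ at the level of characters: from $\chi_\rho\chi_i=\sum_j a_{ij}\chi_j$ one gets by induction $\chi_\rho^{\,k}\chi_i=\sum_j (A_\rho^k)_{ij}\chi_j$, so with $i=0$ (and $\chi_0\equiv 1$) the entry $(A_\rho^k)_{0j}$ is the multiplicity of $\rho_j$ in $\rho^{\otimes k}$. Since $(A_\rho^k)_{0j}$ is also the number of directed paths of length $k$ from the vertex $0$ to the vertex $j$, a vertex $j$ is reachable from the trivial-representation vertex $0$ by a directed path if and only if $\rho_j$ occurs in some tensor power $\rho^{\otimes k}$.

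For the undirected graph I would first record the adjacency rule. Using \eqref{eq:aij} and $\overline{\chi_\rho}=\chi_{\rho^*}$, a short computation with the inner product \eqref{eq:inprod} shows that $a_{ji}$ equals the multiplicity of $\rho_j$ in $\rho^*\otimes\rho_i$. Hence two vertices $i,j$ are joined by an (undirected) edge precisely when $\rho_j$ is a summand of $(\rho\oplus\rho^*)\otimes\rho_i$. Iterating this, the connected component of the vertex $0$ consists of exactly those $\rho_j$ that appear in some tensor product of copies of $\rho$ and $\rho^*$, i.e. in some power $(\rho\oplus\rho^*)^{\otimes\ell}$.

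Now the two implications. If $\rho$ is faithful, then its character takes finitely many---say $m$---distinct values, and the Burnside--Brauer theorem guarantees that every irreducible character is a constituent of one of $\chi_\rho^{\,0},\dots,\chi_\rho^{\,m-1}$; equivalently every $\rho_j$ occurs in some $\rho^{\otimes k}$. By the first paragraph every vertex is then reachable from $0$ by a directed path, so $Q_\rho(G)$ is connected. Conversely, if $\rho$ is not faithful, put $K=\ker\rho\neq\{e\}$. Every tensor product of copies of $\rho$ and $\rho^*$ is trivial on $K$, so by the second paragraph every irreducible lying in the connected component of $0$ is trivial on $K$. Since the regular representation is faithful, some irreducible $\rho_j$ is nontrivial on $K$ and therefore lies in a different component; thus $Q_\rho(G)$ is disconnected.

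I expect the Burnside--Brauer theorem to be the main external input; everything else is bookkeeping with the inner product and with the correspondence between paths in the quiver and constituents of tensor powers. The one point that needs care is the passage from directed reachability, which is all that powers of $\rho$ give directly, to undirected connectivity: this is why I prefer to phrase the second implication through the self-dual representation $\rho\oplus\rho^*$, whose McKay matrix is symmetric, so that the connected component of $0$ is cleanly identified with the set of irreducibles trivial on $\ker\rho$, which is all of $\mathrm{Irr}(G)$ exactly when $\rho$ is faithful.
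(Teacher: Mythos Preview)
Your argument is correct. The paper itself does not supply a proof of this proposition; it is stated with a citation to McKay's original article and used as a black box thereafter. So there is no ``paper's own proof'' to compare against.

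What you have written is essentially the standard proof. The identification of $(A_\rho^k)_{0j}$ with the multiplicity of $\rho_j$ in $\rho^{\otimes k}$ is exactly right, and your handling of the undirected case via $\rho\oplus\rho^*$ is the clean way to do it: since $(\rho\oplus\rho^*)^*\cong\rho\oplus\rho^*$, the associated McKay matrix is symmetric, so undirected reachability from the trivial vertex is precisely ``occurs as a constituent of some $(\rho\oplus\rho^*)^{\otimes\ell}$''. The forward direction then reduces to Burnside's theorem on tensor powers of a faithful representation (the refinement due to Brauer on the bound is not needed here, only the existence statement). For the converse, your observation that every constituent of $(\rho\oplus\rho^*)^{\otimes\ell}$ has $K=\ker\rho$ in its kernel, together with the faithfulness of the regular representation to produce an irreducible nontrivial on $K$, is the right mechanism.

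One cosmetic remark: in the forward direction you already get \emph{directed} reachability from $0$ to every vertex via Burnside, which immediately gives undirected connectivity; the detour through $\rho\oplus\rho^*$ is only really needed for the converse. You note this yourself in the final paragraph, so the presentation is fine.
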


\subsubsection{McKay quivers of direct products of groups}

Let $G$ and $H$ be finite groups. Let $V$ be a representation of $G$ and $W$ be a representation of $H$. Consider the tensor product $V\otimes W$,
the action of $G\times H$ on $V\otimes W$ given by
\begin{equation*}
 (g,h)(v\otimes w)=gv\otimes hw,\quad (g,h)\in G\times H,\ v\in V,\  w\in W,
\end{equation*}
makes $V\otimes W$ a representation of $G\times H$ \cite[page~206]{James-Liebeck:RCG}.
Let $\chi_V$ be the character of $V$ and $\chi_W$ be the character of $W$, the character of $V\otimes W$ is $\chi_V \times\chi_W$ \cite[page~206 \& Proposition~19.6]{James-Liebeck:RCG} with
\begin{equation}
 (\chi_V \times\chi_W)(g,h)=\chi_V(g)\chi_W(h),\quad g\in G,\ h\in H.
\end{equation}

\begin{theorem}[{\cite[Theorem~19.18]{James-Liebeck:RCG}}]\label{thm:CT.prod}
Let $\chi_0,\dots,\chi_r$ be the distinct irreducible characters of $G$ and let $\psi_0,\dots,\psi_s$ be the distinct irreducible characters of $H$.
Then $G\times H$ has precisely $(r+1)(s+1)$ distinct irreducible characters, these are
\begin{equation}
 \chi_i\times\psi_k,\quad 0\leq i\leq r,\ 0\leq k\leq s.
\end{equation}
\end{theorem}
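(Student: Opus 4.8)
The plan is to use the orthonormality relations \eqref{eq:ortho} together with the sum-of-squares identity \eqref{eq:irr.dim.ord}, exploiting the fact that both the inner product \eqref{eq:inprod} and the degree count factor across the direct product. Throughout, let $n_i$ denote the degree of $\chi_i$ and $m_k$ the degree of $\psi_k$.

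First I would record the single computation that does the real work: for any characters $\chi,\chi'$ of $G$ and $\psi,\psi'$ of $H$, the inner product on $G\times H$ splits as a product. Substituting the definition of $\chi\times\psi$ into \eqref{eq:inprod} and using $|G\times H|=|G|\,|H|$, the double sum over $(g,h)\in G\times H$ factors, giving
\begin{equation*}
\inpr[G\times H]{\chi\times\psi}{\chi'\times\psi'}=\inpr[G]{\chi}{\chi'}\,\inpr[H]{\psi}{\psi'}.
\end{equation*}
Everything after this is bookkeeping.

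Applying this identity to the proposed characters and invoking the orthonormality relations \eqref{eq:ortho} for $G$ and for $H$ yields
\begin{equation*}
\inpr[G\times H]{\chi_i\times\psi_k}{\chi_{i'}\times\psi_{k'}}=\inpr[G]{\chi_i}{\chi_{i'}}\,\inpr[H]{\psi_k}{\psi_{k'}}=\delta_{ii'}\delta_{kk'}.
\end{equation*}
Taking $(i,k)=(i',k')$ gives inner product $1$, so by the irreducibility criterion recalled after \eqref{eq:ortho} each $\chi_i\times\psi_k$ is irreducible, being the character of the tensor product representation $V_i\otimes W_k$. Taking $(i,k)\neq(i',k')$ gives inner product $0$, so these $(r+1)(s+1)$ irreducible characters are pairwise distinct.

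It remains to check that there are no further irreducible characters; this completeness step is the only subtle point, since orthonormality by itself guarantees only that there are \emph{at least} $(r+1)(s+1)$ of them. I would settle it with a dimension count. The degree of $\chi_i\times\psi_k$ is its value at the identity, namely $(\chi_i\times\psi_k)(1,1)=\chi_i(1)\psi_k(1)=n_i m_k$, so using \eqref{eq:irr.dim.ord} for $G$ and for $H$,
\begin{equation*}
\sum_{i=0}^r\sum_{k=0}^s (n_i m_k)^2=\Bigl(\sum_{i=0}^r n_i^2\Bigr)\Bigl(\sum_{k=0}^s m_k^2\Bigr)=|G|\,|H|=|G\times H|.
\end{equation*}
The $(r+1)(s+1)$ distinct irreducible characters already exhaust the sum of squares prescribed by \eqref{eq:irr.dim.ord} for the group $G\times H$; any additional irreducible character would contribute a further positive square and violate that identity. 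Hence the $\chi_i\times\psi_k$ are precisely all the irreducible characters of $G\times H$, and there are exactly $(r+1)(s+1)$ of them.
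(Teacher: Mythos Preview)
Your proof is correct and is the standard argument. However, the paper does not actually prove this theorem: it is stated with a citation to \cite[Theorem~19.18]{James-Liebeck:RCG} and no proof is given. So there is nothing in the paper to compare your argument against; you have simply supplied the (correct) textbook proof that the authors chose to quote rather than reproduce.
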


The following theorem allows us to compute the McKay quiver of a direct product of groups relative to the tensor product of representations of each factor group.

\begin{theorem}\label{thm:MQ.prod}
Let $\chi_0,\dots,\chi_r$ be the distinct irreducible characters of $G$ and let $\psi_0,\dots,\psi_s$ be the distinct irreducible characters of $H$.
Let $\rho$ be a representation of $G$ with character $\chi_\rho$ and $\sigma$ be a representation of $H$ with character $\psi_\sigma$.
Then in the McKay quiver $Q_{\rho\otimes\sigma}(G\times H)$ of $G\times H$ relative to the representation $\rho\otimes\sigma$,
there are $a_{ij}b_{kl}$ arrows from $\chi_i\times\psi_k$ to $\chi_j\times\psi_l$
if and only if there are $a_{ij}$ arrows from $\chi_i$ to $\chi_j$ in the McKay quiver $Q_\rho(G)$ of $G$ relative to $\rho$ and $b_{kl}$ arrows from
$\psi_k$ to $\psi_l$ in the McKay quiver $Q_\sigma(H)$ of $H$ relative to $\sigma$.
\end{theorem}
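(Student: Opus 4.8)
The plan is to reduce the statement to the inner-product formula \eqref{eq:aij} for the arrow multiplicities, applied directly to $G\times H$, and then to exploit the multiplicativity of characters under tensor product together with the factorization of the inner product over a direct product.

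First I would invoke Theorem~\ref{thm:CT.prod}, which tells us that the irreducible characters of $G\times H$ are exactly the products $\chi_i\times\psi_k$; hence the vertices of $Q_{\rho\otimes\sigma}(G\times H)$ are naturally indexed by pairs $(i,k)$. Writing $c_{(i,k),(j,l)}$ for the number of arrows from $\chi_i\times\psi_k$ to $\chi_j\times\psi_l$, formula \eqref{eq:aij} applied to the group $G\times H$ and the representation $\rho\otimes\sigma$ yields
\[
c_{(i,k),(j,l)}=\inpr[G\times H]{\chi_{\rho\otimes\sigma}\cdot(\chi_i\times\psi_k)}{\chi_j\times\psi_l}.
\]
This is legitimate because the orthogonality relations \eqref{eq:ortho} hold for the irreducible characters $\chi_j\times\psi_l$ of $G\times H$.

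Next I would use that $\chi_{\rho\otimes\sigma}=\chi_\rho\times\psi_\sigma$ and evaluate the relevant product of characters pointwise: at $(g,h)\in G\times H$ one has
\[
\bigl(\chi_{\rho\otimes\sigma}\cdot(\chi_i\times\psi_k)\bigr)(g,h)=\bigl(\chi_\rho\chi_i\bigr)(g)\,\bigl(\psi_\sigma\psi_k\bigr)(h),
\]
so this product character factors as $(\chi_\rho\chi_i)\times(\psi_\sigma\psi_k)$. Substituting into the definition \eqref{eq:inprod} of the inner product on $G\times H$ and using $|G\times H|=|G|\,|H|$, the double sum over $(g,h)$ separates into a product of a sum over $g\in G$ and a sum over $h\in H$. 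Each factor is precisely the inner product occurring in \eqref{eq:aij} for the respective group, whence
\[
c_{(i,k),(j,l)}=\inpr[G]{\chi_\rho\chi_i}{\chi_j}\,\inpr[H]{\psi_\sigma\psi_k}{\psi_l}=a_{ij}\,b_{kl}.
\]
Since $a_{ij}$ and $b_{kl}$ are exactly the arrow multiplicities defining $Q_\rho(G)$ and $Q_\sigma(H)$, this equality is the desired equivalence.

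I do not expect a genuine obstacle: once the product character is seen to factor, the whole argument is a direct separation of the summation. The only point needing care is the bookkeeping in that factorization, and in particular handling the normalization $1/(|G|\,|H|)$ in \eqref{eq:inprod} so that it splits cleanly into the normalizations $1/|G|$ and $1/|H|$ of the two factor inner products.
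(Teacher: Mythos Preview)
Your proposal is correct and follows essentially the same route as the paper: apply \eqref{eq:aij} to $G\times H$, use that $\chi_{\rho\otimes\sigma}\cdot(\chi_i\times\psi_k)$ factors as $(\chi_\rho\chi_i)\times(\psi_\sigma\psi_k)$, and split the double sum in \eqref{eq:inprod} into a product of the two inner products $a_{ij}$ and $b_{kl}$.
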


\begin{proof}
By \eqref{eq:aij} there are $c_{ik,jl}$ arrows from $\chi_i\times\psi_k$ to $\chi_j\times\psi_l$ with
\begin{align*}
c_{ik,jl}&=\inpr[G\times H]{(\chi_\rho\times\psi_\sigma)(\chi_i\times\psi_k)}{\chi_j\times\psi_l}\\
         &=\frac{1}{|G\times H|}\sum_{\substack{g\in G\\h\in H}}\chi_\rho(g)\psi_\sigma(h)\chi_i(g)\psi_k(h)\overline{\chi_j(g)\psi_l(h)}\\
         &=\Bigl(\frac{1}{|G|}\sum_{g\in G}\chi_\rho(g)\chi_i(g)\overline{\chi_j(g)}\Bigr)\Bigl(\frac{1}{|H|}\sum_{h\in H}\psi_\sigma(h)\psi_k(h)\overline{\psi_l(h)}\Bigr)\\
         \intertext{again by \eqref{eq:aij}}
         &=a_{ij}b_{kl}.\qedhere
\end{align*}
\end{proof}

\section{McKay quivers of finite subgroups of $SU(2)$}\label{sec:MC}%\section{Classical McKay correspondence}\label{sec:MC}

In this section we list the finite subgroups of $SU(2)$ and list their McKay quivers.

\subsection{Finite subgroups of $SU(2)$}\label{ssec:fsgSU2}

The finite subgroups of $SU(2)$ were classified (up to conjugation) by F.~Klein in his book \cite{Klein:Icosahedron} as follows.
Let $SO(3)$ be the group of rotations of $\mathbb{R}^3$, there is a surjective homomorphism $\rho\colon SU(2)\to SO(3)$ with kernel of order $2$.
The finite subgroups of $SO(3)$ are very well known, they are the cyclic groups $\mathbf{C}_q$ of order $q$ ($q\geq 2$),
the dihedral groups $\mathbf{D}_{2q}$ of order $2q$ ($q\geq2$), and the rotation groups of the Platonic solids:
the \textit{tetrahedral group} $\mathbf{T}$ of order $12$, the \textit{octahedral group} $\mathbf{O}$ of order $24$ and the \textit{icosahedral group} $\mathbf{I}$ of order $60$.
If $G$ is a finite subgroup of $SO(3)$, then $BG=\rho^{-1}(G)$ is a finite subgroup of $SU(2)$, we say that $BG$ is the \textit{binary group}\index{binary group} of $G$ since its order is twice the order of $G$.
The finite subgroups of $SU(2)$ are given in Table~\ref{tb:fin.subgr.SU2} \cite{Cayley:Poly,Klein:Icosahedron,Lamotke:RSIS}:

\begin{table}[H]
\begin{equation*}
\setlength{\extrarowheight}{5pt}
\begin{array}{|c|c|c|c|}\hline
\text{Name} & \text{Notation} & \text{Order} & \\\hline
\text{Cyclic groups} & \mathbf{C}_q & q & q\geq 2\\\hline
\text{Binary dihedral groups} & \mathrm{B}\mathbf{D}_{2q} & 4q & q\geq 2\\\hline
\text{Binary tetrahedral group} & \mathrm{B}\mathbf{T} & 24 &\\\hline
\text{Binary octahedral group} & \mathrm{B}\mathbf{O} & 48 &\\\hline
\text{Binary icosahedral group} & \mathrm{B}\mathbf{I} &120 &\\\hline
 \end{array}
\end{equation*}
\caption{Finite subgroups of $SU(2)$.}\label{tb:fin.subgr.SU2}
\end{table}

\subsection{Character tables of finite subgroups of $SU(2)$}\label{ssec:char.tab.SU2}

In this article we only need the character table of cyclic groups since the quivers $Q_{\rho_{\mathrm{Nat}}}(\Gamma)$ are known and we shall list them later.
One can find the character tables of all the finite subgroups of $SU(2)$ in \cite{Cisneros:EITDOSG,Lindh:McKayCorr} and the irreducible representations in \cite[Appendix]{Cisneros-Romano:RMC} or in \cite{Lindh:McKayCorr}.

\subsubsection{\textbf{Cyclic groups.} $\mathbf{C}_n$, $n\geq 2$.}\label{sssec:Cn}

We identify $\mathbf{C}_n$ with the $n$-th roots of unity. Let $\zeta_{n}=e^{\frac{2\pi i}{n}}$ be a $n$-th primitive root of unity.
The group $\mathbf{C}_n$ has $n$ irreducible representations $\beta_j$, $0\leq j< n$ given by $\beta_j(\zeta_{n})=\zeta_n^{j}$.
The natural representation $\rho_{\mathrm{Nat}}\colon\mathbf{C}_n\to SU(2)$ is given by the direct sum $\rho_{\mathrm{Nat}}=\beta_1\oplus\beta_{n-1}$.
Since each conjugacy class has just one element, we denote the class by such element. The character table of $\mathbf{C}_n$ is given in Table~\ref{tab:CT.Cq} where we denote by $\chi_j$ the character of the representation $\beta_j$.

\begin{table}[H]
\begin{equation*}
\setlength{\extrarowheight}{2pt}
\begin{array}{|c|c|c|c|c|c|c|}\hline
\text{\small Class} & 1 & \zeta_{n} &\cdots & \zeta_{n}^l &\cdots & \zeta_{n}^{n-1}\\\hline
\chi_j & 1& \zeta_n^{j} &\cdots &  \zeta_n^{lj} & \cdots&\zeta_{n}^{(n-1)j}\\\hline
\multicolumn{7}{l}{\text{Note: $0\leq j\leq n-1$,  $0\leq l\leq n-1$.}}
\end{array}
\end{equation*}
\caption{Character table of $\mathbf{C}_n$.}\label{tab:CT.Cq}
\end{table}

\subsection{McKay quivers of finite subgroups of $SU(2)$}\label{ssec:MQ.SU2}

Let $\Gamma$ be a finite subgroup of $SU(2)$. Let $\rho_{\mathrm{Nat}}\colon\Gamma\hookrightarrow SU(2)$ be the \textit{natural representation} given by the inclusion.
We are interested in the McKay quiver $Q_{\rho_{\mathrm{Nat}}}(\Gamma)$ of $\Gamma$ relative to $\rho_{\mathrm{Nat}}$, which were first given by McKay in \cite{McKay:GSFG}.
For all the finite subgroups $\Gamma$ of $SU(2)$ the natural representation $\rho_{\mathrm{Nat}}$ is irreducible,
except for the cyclic case, where it is the direct sum of a faithful irreducible representation and its dual.

Using the character tables given in \cite{Cisneros:EITDOSG} and \eqref{eq:aij} one can check that the McKay matrices $A_{\rho_{\mathrm{Nat}}}(\Gamma)$ satisfy the following properties:
\begin{enumerate}[1.]
\item $a_{ij}=a_{ji}$,\label{it:undirected}
\item $a_{ii}=0$,\label{it:noloop}
\item $a_{ij}=\{0,1\}$.\label{it:simple}
\end{enumerate}
Since the representation $\rho_{\mathrm{Nat}}$ is faithful, by Proposition~\ref{prop:connect} the McKay quiver $Q_{\rho_{\mathrm{Nat}}}(\Gamma)$ is connected, by \ref{it:undirected} it is
an undirected graph, by \ref{it:noloop} it has no self-loops and by \ref{it:simple} is a simple graph, that is, there is only up to one (undirected) edge between vertices.
The McKay quivers $Q_{\rho_{\mathrm{Nat}}}(\Gamma)$ are the graphs listed in Table~\ref{tab:MK.graph} and they correspond to the \textit{extended Dynkin diagrams} of type $\tilde{A}$, $\tilde{D}$, $\tilde{E}$.
In other words, the McKay Quivers $Q_{\rho_{\mathrm{Nat}}}(\Gamma)$ give a bijection between isomorphism classes of finite subgroups of $SU(2)$ and extended Dynkin diagrams of type $\tilde{A}$, $\tilde{D}$ and $\tilde{E}$.
\begin{table}[H]
\begin{equation*}
\setlength{\extrarowheight}{3pt}
\begin{array}{|c|c|c|c|c|}\hline
\mathsf{C}_q\leftrightarrow\tilde{A}_{q-1} & \mathsf{BD}_{2q}\leftrightarrow\tilde{D}_{q+2} & \mathsf{BT}\leftrightarrow\tilde{E}_6 & \mathsf{BO}\leftrightarrow\tilde{E}_7 & \mathsf{BI}\leftrightarrow\tilde{E}_8 \\\hline
%Q_{\rho_{\mathrm{Nat}}}(\Gamma) &
\tikz\dynkin[scale=1,extended]A{}\dynkinLabelRoot{0}{\rho_0}; &
\tikz\dynkin[scale=1,extended]D{}\dynkinLabelRoot{0}{\rho_0}; &
\tikz\dynkin[scale=1,extended]E6{}\dynkinLabelRoot{0}{\rho_0}; &
\tikz\dynkin [scale=1,extended]E7{}\dynkinLabelRoot{0}{\rho_0}; &
\tikz\dynkin [scale=1,extended]E8{}\dynkinLabelRoot{0}{\rho_0};\\\hline
\end{array}
\end{equation*}
\caption{McKay graphs $Q_{\rho_{\mathrm{Nat}}}(\Gamma)$ of finite subgroups $\Gamma$ of $SU(2)$.}\label{tab:MK.graph}
\end{table}

\section{Small finite subgroups of $GL(2,\mathbb{C})$}\label{sec:QS}%\section{Quotient singularities}\label{sec:QS}

The classification of finite subgroups of  $GL(2,\mathbb{C})$ can be found in \cite{DuVal:HQR} obtained from the classification of finite subgroups of $GL(4,\mathbb{R})$ by Goursat in \cite{Goursat:SOD}.
The list of finite subgroups of $GL(4,\mathbb{R})$ in \cite{DuVal:HQR} is slightly incomplete, as it is pointed out in \cite{Coxeter:RCP,Conway-Smith:QOGAS} where the complete classification is given, but this does not affect the classification of finite subgroups of $GL(2,\mathbb{C})$.

\subsection{Classification of small finite subgroups of $GL(2,\mathbb{C})$}

An element of the group $GL(2,\mathbb{C})$ is a \textit{pseudo-reflection} if it fixes a line, that is, if it has $1$ as an eigenvalue.
In \cite{Prill:LCQCMDG} Prill call a subgroup $G$ of $GL(2,\mathbb{C})$ \textit{small} if no $g\in G$  is a pseudo-reflection.
The list of small finite subgroups of $GL(2,\mathbb{C})$ is given in \cite[Satz~2.9]{Brieskorn:RSKF} (see also \cite{Pe:ONPQSS}). Here we sketch the construction of this list, which also explains the notation to enumerate these groups, for more details see \cite[Appendix~A]{Pe:ONPQSS}.

First notice that any finite subgroup of $GL(2,\mathbb{C})$ is conjugate to a finite subgroup of $U(2)$, the group of $2\times2$ unitary matrices \cite[Lemma~A.17]{Pe:ONPQSS}.

\subsubsection{Finite subgroups of $U(2)$}\label{ssec:fs.U2}

Let $SU(2)$ be the subgroup of matrices in $U(2)$ with determinant $1$.
Any element of $U(2)$ can be written in the following two ways \cite[Lemma~A.18]{Pe:ONPQSS}:
\begin{equation*}
\lambda\begin{pmatrix}
 a & -\bar{c}\\
 c & \bar{a}
\end{pmatrix},\qquad -\lambda \begin{pmatrix}
                               -a & \bar{c}\\
                               -c & -\bar{a}
                              \end{pmatrix}
\end{equation*}
with $\lambda\in\mathbb{S}^1$ and the matrices in $SU(2)$.
Consider the $2:1$ homomorphism given by
\begin{equation}\label{eq:PHI}
 \begin{split}
\Phi\colon\mathbb{S}^1\times SU(2)&\to U(2)\\
\left(\lambda,\begin{pmatrix}
               a & -\bar{c}\\
               c & a
              \end{pmatrix}
\right)&\mapsto \lambda \begin{pmatrix}
               a & .\bar{c}\\
               c & a
              \end{pmatrix}.
 \end{split}
\end{equation}

Given a finite subgroup $G< U(2)$, consider the subgroup of $\mathbb{S}^1\times SU(2)$
\begin{equation*}
 H_G\colon\Phi^{-1}(G)=\{(\lambda,r)\,|\, \lambda r\in G\}.
\end{equation*}
From $H_G$ we can consider the following finite subgroups
\begin{align*}
 L&=\{\lambda\in \mathbb{S}^1\,|\, \exists r\in SU(2), (\lambda,r)\in H_G\}<\mathbb{S}^1,\\
 L_K&=\{\lambda\in \mathbb{S}^1\,|\, (\lambda,1)\in H_G\}<L<\mathbb{S}^1,\\
 R&=\{r\in SU(2)\,|\, \exists \lambda\in \mathbb{S}^1, (\lambda,r)\in H_G\}<SU(2),\\
 R_K&=\{r\in SU(2)\,|\,  (1,r)\in H_G\}<R<SU(2).
\end{align*}
We have that the homomorphism
\begin{equation*}
 \phi\colon L/L_K\to R/R_K
\end{equation*}
defined by $\phi([\lambda])=[r]$ if $(\lambda,r)\in H_G$ is an isomorphism.

Conversely, given the following data:
\begin{itemize}
 \item a finite subgroup $L$ of $\mathbb{S}^1$ and a normal subgroup $L_K$ of $L$,
 \item a finite subgroup $R$ of $SU(2)$ and a normal subgroup $R_K$ of $R$,
 \item an isomorphism $\phi\colon L/L_K\to R/R_K$
\end{itemize}
we can define a subgroup $H$ of $\mathbb{S}^1\times SU(2)$ by
\begin{equation*}
 (\lambda,r)\in H \Longleftrightarrow \phi([\lambda])=[r]
\end{equation*}
which gives the subgroup $G_H=\Phi(H)$ of $U(2)$.

Apart from the choice of isomorphism $\phi$, that may not be unique, they give mutually inverse constructions.

\paragraph{\textbf{Notation:}} The subgroup $H$ and also the associated subgroup $G=\Phi(H)$ of $U(2)$ will be denoted by $ (L,L_K;R,R_K)_\phi$.

Thus, the classification of finite subgroups of $U(2)$, and therefore, the classification of finite subgroups of $GL(2,\mathbb{C})$, is the same as the classification of the subgroups
$G=(L,L_K;R,R_K)_\phi$.  %We need to know the finite subgroups of $SU(2)$.

\subsection{Small finite subgroups of $U(2)$}

The list of the subgroups $(L,L_K;R,R_K)_\phi$ is given by the first 9 families of groups in \cite[p.~57]{DuVal:HQR} (see also \cite[p.~98]{Coxeter:RCP}), among these groups,
Brieskorn found in \cite[Satz~2.9]{Brieskorn:RSKF} the conditions in order to have \textbf{small} subgroups. It is not easy to work with these groups using the notation $G=(L,L_K;R,R_K)_\phi$;
in \cite[p.~38]{Riemenschneide:IEUGL2C} Riemenschneider divides Brieskorn's list of small finite subgroups of $U(2)$ in five families and gives their generators, which allowed him to find
a minimal set of generators of the ring of $G$-invariant polynomials and a minimal set of generators for the relations between the invariant polynomials. Although in \cite{Riemenschneide:IEUGL2C} generators are given for such groups, the relations are missing in order to give a presentation of the groups, which is useful to find the irreducible representations.
In \cite{Arciniega-etal:CRMQSAPS} the authors prove that the list of small finite subgroups of $U(2)$ coincides with the list of finite subgroups of $SO(4)$ acting freely and isometrically on the $3$-dimensional sphere $\mathbb{S}^3$, that is, the fundamental groups of spherical $3$-manifolds (see \cite[\S1.7]{Aschenbrenner-etal:3MG} or \cite[\S3]{Milnor:GASNWFP}), these groups are described in terms of presentations.

In order to give the list of small finite subgroups of $U(2)$ we need to introduce three families of groups. % and the matrices given by Riemenschneider in \cite[p.~38]{Riemenschneide:IEUGL2C}.

\subsubsection{\textbf{Cyclic subgroups of $U(2)$}}

Every non-trivial subgroup $(L,L_K;R,R_K)_\phi$ of $U(2)$ with $L$ and $R$ cyclic is conjugate to one of the following cyclic groups
\begin{equation*}
 \mathbb{C}_{n,q}=\left\{
 \begin{pmatrix}
  \zeta_n & 0\\
  0 & \zeta_n^q
 \end{pmatrix} \right\}\qquad 0<q<n,\quad (n,q)=1,
\end{equation*}
where $\zeta_n$ is the primitive root of unity $e^{\frac{2\pi i}{n}}$. The groups $\mathbb{C}_{p,q}$ with $0<q<n$ and $(n,q)=1$ are small if $n>1$.

Two groups $\mathbb{C}_{n,q}$ and $\mathbb{C}_{n',q'}$ with $(n,q)=(n',q')=1$ are conjugate if and only if $n=n'$ and either $q=q'$ or $qq'\equiv 1 \mod n$ \cite[Lemma~A.12]{Pe:ONPQSS}.

\subsubsection{\textbf{Small finite subgroups of $GL(2,\mathbb{C})$}}\label{sssec:sfsU2}

Let $D_{2^{k}(2r+1)}$ and $P'_{8\cdot 3^k}$ be the groups given by the presentations
\begin{gather*}
D_{2^{k}(2r+1)}:=\langle x,y\mid x^{2^{k}}=1, y^{2r+1}=1, xyx^{-1}=y^{-1}\rangle,\ \text{$k>2$, $r\geq1$,}\\%\label{eq:f83e6c}\\
P'_{8\cdot 3^k}:=\langle x,y,z\mid x^2=(xy)^2=y^2,zxz^{-1}=y, zyz^{-1}=xy, z^{3^k}=1\rangle,\ \text{$k\geq2$.}%\label{eq:271bf2}
\end{gather*}

Now we can list the small finite subgroups of $GL(2,\mathbb{C})$, see \cite[Satz~2.9]{Brieskorn:RSKF}, \cite[p.~38]{Riemenschneide:IEUGL2C}, \cite[\S1.7]{Aschenbrenner-etal:3MG}, \cite[Remark~2.6 \& Theorem~2.7]{Arciniega-etal:CRMQSAPS}.
%Recall that the finite subgroups of $\mathbb{S}^1$ are the groups $\mathbf{C}_n$ of the $n$-th roots of unity.

\begin{theorem}\label{thm:sfs}
Each small finite subgroup of $GL(2,\mathbb{C})$ is conjugate to one of the following groups:
\begin{align*}
\intertext{\textbf{Cyclic groups.} Order $n$. Let $0<q<n$ with $\gcd(n,q)=1$,}
 \mathbb{C}_{n,q}&=\left\langle\left(\begin{smallmatrix}
                                                 \zeta_n & 0 \\ 0 & \zeta_n^q
                                                \end{smallmatrix}\right)\right\rangle.\\
\intertext{\textbf{Dihedral groups.} Order $4qm$. Let $0<q<n$ with $\gcd(n,q)=1$, }
 \mathbb{D}_{n,q}&=\begin{cases}
                   \gcd(m,2)=1, m=n-q: &\\
                   (\mathbf{C}_{2m},\mathbf{C}_{2m};\mathrm{B}\mathbf{D}_{2q},\mathrm{B}\mathbf{D}_{2q})\cong\mathrm{B}\mathbf{D}_{2q}\times\mathbf{C}_m,%\cong\langle\psi_{2q},\tau,\phi_{2m}\rangle,
                   & \\
                   \gcd(m,2)=2, \text{$m=n-q=2^{k-2}l$ with $l$ odd and $k\geq3$:}&\\
                   (\mathbf{C}_{4m},\mathbf{C}_{2m};\mathrm{B}\mathbf{D}_{2q},\mathbf{C}_{2q})\cong D_{2^{k}\cdot q}\times\mathbf{C}_l.%\cong\langle\psi_{2q},\tau\phi_{4m}\rangle.
                   &
                  \end{cases}\\
\intertext{\textbf{Tetrahedral groups.} Order $24m$.}
 \mathbb{T}_m&=\begin{cases}
                \gcd(m,6)=1: &\\
               (\mathbf{C}_{2m},\mathbf{C}_{2m};\mathrm{B}\mathbf{T},\mathrm{B}\mathbf{T})\cong\mathrm{B}\mathbf{T}\times\mathbf{C}_m,%\cong\langle\psi_4,\tau,\eta,\phi_{2m}\rangle,
               & \\
               \gcd(m,6)=3, \text{$m=3^{k-1}l$ with $l$ odd, $\gcd(3,l)=1$ and $k\geq2$:}&\\
               (\mathbf{C}_{6m},\mathbf{C}_{2m};\mathrm{B}\mathbf{T},\mathrm{B}\mathbf{D}_{4})\cong P'_{8\cdot 3^k}\times\mathbf{C}_l.%\cong\langle\psi_4,\tau,\eta\phi_{6m}\rangle.
               &\\
              \end{cases}\\
\intertext{\textbf{Octahedral groups.} Order $48m$.}
 \mathbb{O}_m&=(\mathbf{C}_{2m},\mathbf{C}_{2m};\mathrm{B}\mathbf{O},\mathrm{B}\mathbf{O})\cong\mathrm{B}\mathbf{O}\times\mathbf{C}_m,%\cong\langle\psi_8,\tau,\eta,\phi_{2m}\rangle,
 \gcd(m,6)=1.\\
\intertext{\textbf{Icosahedral groups.} Order $120m$.}
 \mathbb{I}_m&=(\mathbf{C}_{2m},\mathbf{C}_{2m};\mathrm{B}\mathbf{I},\mathrm{B}\mathbf{I})\cong\mathrm{B}\mathbf{I}\times\mathbf{C}_m,%\cong\langle\sigma,\omega,\iota,\phi_{2m}\rangle,
 \gcd(m,30)=1.
\end{align*}
\end{theorem}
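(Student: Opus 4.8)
The plan is to leverage the structural machinery assembled in Section~\ref{sec:QS} rather than reclassifying from scratch. First I would reduce to $U(2)$: by \cite[Lemma~A.17]{Pe:ONPQSS} every finite subgroup of $GL(2,\mathbb{C})$ is conjugate into $U(2)$, so it suffices to classify small finite subgroups of $U(2)$. Via the $2:1$ homomorphism $\Phi$ of \eqref{eq:PHI}, each such group is encoded by Goursat data $(L,L_K;R,R_K)_\phi$, and these quadruples run through the first nine families of \cite[p.~57]{DuVal:HQR} (equivalently \cite[p.~98]{Coxeter:RCP}). Thus the theorem reduces to (i) imposing smallness on these quadruples and (ii) recognizing the resulting abstract groups.

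For step (i) I would translate smallness into a numerical condition on the data. Writing an element of $G$ as $\lambda r$ with $\lambda\in\mathbb{S}^1$ and $r\in SU(2)$, the eigenvalues of $r$ are a conjugate pair $e^{\pm i\theta}$, so those of $\lambda r$ are $\lambda e^{\pm i\theta}$; when $r\neq\pm I$ these two are distinct, and $\lambda r$ fixes a line exactly when $\lambda^{-1}$ equals one of them. Smallness therefore requires that for every $(\lambda,r)\in H_G$ with $r\neq\pm I$, the scalar $\lambda^{-1}$ is not an eigenvalue of $r$. This is precisely the condition isolated by Brieskorn, and running it through each of the nine families is what prunes the list to the five families of the statement; I would reproduce Brieskorn's case analysis \cite[Satz~2.9]{Brieskorn:RSKF} at this point.

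For step (ii) I would organize the surviving quadruples by the isomorphism type of the $SU(2)$-factor $R$, which by Section~\ref{ssec:fsgSU2} is one of $\mathbf{C}_q$, $\mathrm{B}\mathbf{D}_{2q}$, $\mathrm{B}\mathbf{T}$, $\mathrm{B}\mathbf{O}$, $\mathrm{B}\mathbf{I}$; these yield the cyclic, dihedral, tetrahedral, octahedral and icosahedral families respectively. The cyclic case $R=\mathbf{C}_q$ produces the groups $\mathbb{C}_{n,q}$, with the conjugacy reduction $q\sim q'$ whenever $qq'\equiv1\pmod n$ supplied by \cite[Lemma~A.12]{Pe:ONPQSS}. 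When $L_K=L$ (equivalently $R_K=R$) the fiber product $H_G$ is the full direct product $L\times R$ inside $\mathbb{S}^1\times SU(2)$; applying $\Phi$, whose kernel $\{(1,I),(-1,-I)\}$ lies in $L\times R$, and using coprimality of $m$, the quotient $G=\Phi(H_G)$ becomes the split product $\mathrm{B}\mathbf{D}_{2q}\times\mathbf{C}_m$, and likewise $\mathrm{B}\mathbf{T}\times\mathbf{C}_m$, $\mathrm{B}\mathbf{O}\times\mathbf{C}_m$, $\mathrm{B}\mathbf{I}\times\mathbf{C}_m$, the coprimality restrictions on $m$ being forced by smallness.

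The main obstacle is the two genuinely twisted cases, where $L_K\subsetneq L$: the quadruples $(\mathbf{C}_{4m},\mathbf{C}_{2m};\mathrm{B}\mathbf{D}_{2q},\mathbf{C}_{2q})$ and $(\mathbf{C}_{6m},\mathbf{C}_{2m};\mathrm{B}\mathbf{T},\mathrm{B}\mathbf{D}_4)$. Here $\phi$ glues a $\mathbb{Z}/2$ (resp.\ $\mathbb{Z}/3$) quotient of the cyclic factor to the corresponding quotient $R/R_K$, so the group does not split off the full cyclic part. To identify it I would split off the coprime cyclic factor $\mathbf{C}_l$ (using $m=2^{k-2}l$ with $l$ odd, resp.\ $m=3^{k-1}l$ with $\gcd(l,6)=1$), after which the remaining factor is generated by the image of a generator of $R$ together with the gluing element; verifying that these satisfy the defining relations of $D_{2^k\cdot q}$ (resp.\ $P'_{8\cdot3^k}$) recorded in Section~\ref{sssec:sfsU2} yields the asserted isomorphisms $D_{2^{k}\cdot q}\times\mathbf{C}_l$ and $P'_{8\cdot3^k}\times\mathbf{C}_l$. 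This presentation-matching, together with the bookkeeping of the index and coprimality constraints, is where essentially all the work resides; it is carried out in \cite[p.~38]{Riemenschneide:IEUGL2C} and \cite[Remark~2.6 \& Theorem~2.7]{Arciniega-etal:CRMQSAPS}, whose generator computations I would follow.
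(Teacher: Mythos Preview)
Your proposal is correct and follows exactly the route the paper itself takes: the paper does not supply an independent proof of Theorem~\ref{thm:sfs} but states it with the references \cite[Satz~2.9]{Brieskorn:RSKF}, \cite[p.~38]{Riemenschneide:IEUGL2C}, \cite[\S1.7]{Aschenbrenner-etal:3MG}, and \cite[Remark~2.6 \& Theorem~2.7]{Arciniega-etal:CRMQSAPS}, after the preparatory discussion in Section~\ref{sec:QS} (reduction to $U(2)$, the $2:1$ map $\Phi$, Goursat data, Du~Val's nine families, Brieskorn's smallness, and the identification of the twisted cases). Your sketch simply unpacks precisely this chain of references, so there is nothing to add.
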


\begin{remark}\label{rem:GxC}
From Theorem~\ref{thm:sfs} we can see that the small finite subgroups of $U(2)$ are $\mathrm{B}\mathbf{D}_{2q}$, $\mathrm{B}\mathbf{T}$, $\mathrm{B}\mathbf{O}$, $\mathrm{B}\mathbf{I}$, $D_{2^{k+1}(2r+1)}$ and $P'_{8\cdot 3^k}$; the direct product of any of the previous groups with a cyclic group of relatively prime order; and the cyclic groups $\mathbb{C}_{n,q}$. See \cite[Remark~2.6 \& Theorem~2.7]{Arciniega-etal:CRMQSAPS}.
\end{remark}

\section{Character tables of $D_{2^{k+1}(2r+1)}$ and $P'_{8\cdot 3^k}$}\label{sec:CT}

In this section, using the presentations of the groups $D_{2^{k+1}(2r+1)}$ and $P'_{8\cdot 3^k}$, we compute their conjugacy classes, and using their irreducible representations given in \cite[\S3.3]{Arciniega-etal:CRMQSAPS} we compute their character tables.

\begin{remark}[\textbf{Notation}]\label{rem:Notation}
We shall denote the conjugacy classes of the group $G$ by the number of elements they have.
If two different conjugacy classes have the same number of elements we shall use a subindex to differentiate them.
\end{remark}

\subsection{Groups $\mathbb{C}_{n,q}$}\label{ssec:Cnq}

The irreducible representations and the character table of the cyclic groups $\mathbb{C}_{n,q}$ are given in subsubsection~\ref{sssec:Cn}. %Table~\ref{tab:CT.Cq}.
We only need to note that the natural representation
$\rho_{\mathrm{Nat}}\colon\mathbb{C}_{n,q}\to U(2)$ is given by the direct sum $\rho_{\mathrm{Nat}}=\beta_1\oplus\beta_{q}$.

\subsection{Groups $D_{2^k(2r+1)}$}\label{ssec:D2k2r+1}

This family of groups belongs to the list of finite subgroup of $SO(4)$ acting freely on $\mathbb{S}^3$ see \cite[\S1.7]{Aschenbrenner-etal:3MG}.
The group $D_{2^{k}\cdot q}$  corresponds to the small dihedral group $\mathbb{D}_{2^{k-2}+q,q}$, with $q$ odd, in Theorem~\ref{thm:sfs}.

\subsubsection{Presentation}

The group $D_{2^k(2r+1)}$ has order $2^k(2r+1)$ and a presentation is
\begin{equation}\label{eq:f83e6c}
 D_{2^k(2r+1)}:=\langle x,y\mid x^{2^k}=1, y^{2r+1}=1, xyx^{-1}=y^{-1}\rangle,
\end{equation}
where $k>2$ and $r\geq1$.

\begin{remark}
When $k=2$ there is an isomorphism between $D_{4(2r+1)}$ and the binary dihedral group $BD_{2(2r+1)}$ \cite[Remark~2.3]{Arciniega-etal:CRMQSAPS}.
\end{remark}

Let $[G,G]$ be the \textit{commutator subgroup} of $G$. Let $\Ab\colon G\to G/[G,G]$
be the projection homomorphism, and denote the \textit{abelianization} of $G$ by $\Ab(G)=G/[G,G]$.

\begin{lemma}[{\cite[Lemma~3.1]{Arciniega-etal:CRMQSAPS}}]\label{lem:Ab.D}
The abelianization of $D_{2^{k}(2r+1)}$ is $\Ab(D_{2^{k}(2r+1)})=\mathbf{C}_{2^{k}}$, where $\Ab(x)$ is the generator of $\mathbf{C}_{2^{k}}$ and $\Ab(y)=1$.
\end{lemma}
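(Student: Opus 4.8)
The plan is to compute the commutator subgroup $[G,G]$ of $G=D_{2^{k}(2r+1)}$ directly from the presentation \eqref{eq:f83e6c} and identify the quotient $G/[G,G]$. Since the abelianization is obtained by imposing commutativity on the generators, I would work with the presentation of $\Ab(G)$ obtained by adjoining the relation $xy=yx$ to the defining relations.

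First I would exploit the interaction between the conjugation relation and commutativity. In $\Ab(G)$ the relation $xyx^{-1}=y^{-1}$ together with $xy=yx$ gives $y=y^{-1}$, hence $y^2=1$. Combining this with the existing relation $y^{2r+1}=1$, the order of the image of $y$ divides $\gcd(2,2r+1)=1$, so $\Ab(y)=1$. This is the step where the oddness of $2r+1$ is essential; it is the only place coprimality enters, and it is the crux of the computation. Once $\Ab(y)=1$, the presentation of $\Ab(G)$ collapses to $\langle x\mid x^{2^{k}}=1\rangle$, which is precisely $\mathbf{C}_{2^{k}}$ with $\Ab(x)$ as generator.

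Alternatively, and perhaps more transparently, I would compute $[G,G]$ as a concrete subgroup. Using $xyx^{-1}=y^{-1}$, the basic commutator is $[x,y]=xyx^{-1}y^{-1}=y^{-2}$, so $y^{-2}\in[G,G]$. Because $y$ has odd order $2r+1$, the element $y^2$ generates the same cyclic group as $y$, whence $\langle y\rangle\subseteq[G,G]$. For the reverse inclusion, note that $\langle y\rangle$ is normal in $G$ (again by $xyx^{-1}=y^{-1}\in\langle y\rangle$) and that the quotient $G/\langle y\rangle$ has presentation $\langle x\mid x^{2^{k}}=1\rangle\cong\mathbf{C}_{2^{k}}$, which is abelian; therefore $[G,G]\subseteq\langle y\rangle$. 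Combining the two inclusions yields $[G,G]=\langle y\rangle$, and consequently $\Ab(G)=G/\langle y\rangle\cong\mathbf{C}_{2^{k}}$ with $\Ab(x)$ the generator and $\Ab(y)=1$, as claimed.

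I do not anticipate any serious obstacle here: the argument is short and purely group-theoretic. The only point requiring a moment of care is the coprimality reduction forcing $\Ab(y)=1$, which hinges on $2r+1$ being odd so that $y^2=1$ and $y^{2r+1}=1$ together imply $y=1$.
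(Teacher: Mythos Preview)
Your argument is correct; both the presentation-level computation and the explicit identification $[G,G]=\langle y\rangle$ are valid and cleanly executed. Note, however, that the paper does not supply its own proof of this lemma: it is simply quoted from \cite[Lemma~3.1]{Arciniega-etal:CRMQSAPS}, so there is no in-paper argument to compare against, and your write-up fills that gap.
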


\subsubsection{Conjugacy classes}\label{conjugacy-classes}

To find the conjugacy classes, from the second relation in  presentation \eqref{eq:f83e6c} we have
$y^{-1}=y^{2r}$, thus, from the third relation
\begin{equation}\label{eq:04c7a8}
xy^{-1}x^{-1}=xy^{2r}x^{-1}=y^{-2r}=y.
\end{equation}
From the third relation in \eqref{eq:f83e6c} and
\eqref{eq:04c7a8} we get
\begin{equation}\label{eq:10c77e}
y^{-1}x=xy,\quad yx=xy^{-1},\quad y^{-1}x^{-1}=x^{-1}y,\quad yx^{-1}=x^{-1}y^{-1}.
\end{equation}
Using \eqref{eq:10c77e} we obtain
\begin{equation*}
yx^py^{-1}=xy^{-1}x^{p-1}y^{-1}=xy^{-1}xx^{p-2}y^{-1}=x^2yx^{p-2}y^{-1}=\begin{cases}
                                                                         x^p & \text{if $p$ is even,}\\
                                                                         x^py^{-2} & \text{if $p$ is odd.}
                                                                        \end{cases}
\end{equation*}
Using \eqref{eq:10c77e} any word on the generators can be
taken to the form $x^py^q$ with $p,q\in\mathbb{Z}$. Fix an element
$x^py^q$ and conjugate it by an arbitrary element $x^ry^s$ with
$p,q,r,s\in\mathbb{Z}$,
\begin{equation}\label{eq:1437a3}
x^ry^sx^py^qy^{-s}x^{-r}%=x^ry^sx^py^{-s}y^sy^qy^{-s}x^{-r}
=\begin{cases}
                                                            x^rx^py^qx^{-r}=\begin{cases}
                                                                             x^py^q & \text{$r$ even,}\\
                                                                             x^py^{-q} & \text{$r$ odd,}
                                                                            \end{cases} & \text{$p$ even,}\\
                                                            x^rx^py^{q-2s}x^{-r}=\begin{cases}
                                                                                 x^py^{q-2s} & \text{$r$ even,}\\
                                                                                 x^py^{-q+2s} & \text{$r$ odd,}
                                                                                \end{cases} & \text{$p$ odd.}
                                                           \end{cases}
\end{equation}
By \eqref{eq:1437a3} we obtain the following conjugacy classes, which by Remark~\ref{rem:Notation}, they are denoted by the number of their elements:
\begin{align*}
\mathsf{1}_l&=\{x^{2l}\},  & l&=0,\dots,2^{k-1}-1,\\
\mathsf{2}_{l,q}&=\{x^{2l}y^q,x^{2l}y^{-q}\},  & l&=0,\dots,2^{k-1}-1,\  q=1,\dots,r,\\
\mathsf{(2r+1)}_l&=\{x^{2l+1},x^{2l+1}y,x^{2l+1}y^{2}\dots,x^{2l+1}y^{2r}\},  & l&=0,\dots,2^{k-1}-1.
\end{align*}
We get $2^{k-1}(r+2)$ conjugacy classes: $2^{k-1}$ conjugacy classes with one element each one,
$2^{k-1}r$ conjugacy classes with $2$ elements and $2^{k-1}$ conjugacy classes with $2r+1$ elements. They are all, since
$2^{k-1}+2\cdot2^{k-1}r+2^{k-1}(2r+1)=2^k(2r+1)$ is the order of the group.

\subsubsection{Irreducible representations}\label{irreducible-representations}

The group $D_{2^k(2r+1)}$ has $2^{k-1}(r+2)$ irreducible representations. For $k=2$ this agrees with the fact that
$D_{4(2r+1)}\cong\mathsf{BD}_{2(2r+1)}$.

It has $2^k$ one-dimensional representations, denoted by $\alpha_j$, $0\leq j\leq 2^k-1$, they correspond to the irreducible representations
of its abelianization, by Lemma~\ref{lem:Ab.D} $\mathrm{Ab}(D_{2^k(2r+1)})=\mathbf{C}_{2^k}$ and they are given by
\begin{equation*}
\alpha_j(x)=\zeta_{2^k}^{j},\quad \alpha_j(y)=1.
\end{equation*}

Consider the two-dimensional representations
\begin{equation*}
\varrho_{t,s}(x)=\zeta_{2^k}^s\left(\begin{smallmatrix}0&1\\ (-1)^t&0\end{smallmatrix}\right),\qquad \varrho_{t,s}(y)=\left(\begin{smallmatrix}\zeta_{2r+1}^{t}&0\\0&\zeta_{2r+1}^{-t}\end{smallmatrix}\right),
\end{equation*}
where $\varrho_{1,1}$ is the natural representation.

\begin{remark}\label{rem:k=2}
When $k=2$ we have $s=0$ in $\varrho_{t,s}$ and we recover the $2$-dimensional irreducible representations of $\mathsf{BD}_{2(2r+1)}$.
It is important to note that in this case the natural representation is $\varrho_{1,0}$ and not $\varrho_{1,1}$.
\end{remark}

It is straightforward to check that $\varrho_{t,s}$ satisfy the relations in \eqref{eq:f83e6c} so they are indeed representations of $D_{2^k(2r+1)}$. % Comment if uncomment what follows

Let $H$ be a set of elements of $D_{2^k(2r+1)}$ containing exactly
one element from each conjugacy class, for instance
\begin{align*}
H&=\{x^0,x^2,\dots,x^{2l},\dots,x^{2^{k}-2},x,x^3,\dots,x^{2^k-1},y,\dots,y^r,\dots,x^{2^k-2}y,\dots,x^{2^k-2}y^r\}\\
&=\{x^{2l},x^{2l+1},x^{2l}y^q\},\  l=0,\dots,2^{k-1}-1,\  q=1,\dots,r.
\end{align*}
Evaluating $\varrho_{t,s}$ on these elements we get
\begin{align*}
 \varrho_{t,s}(x^{2l})&=\zeta_{2^k}^{2ls}\left(\begin{smallmatrix}
                                         (-1)^{tl} & 0\\ 0 & (-1)^{tl}
                                        \end{smallmatrix}\right),\\
 \varrho_{t,s}(x^{2l+1})&=\zeta_{2^k}^{(2l+1)s}\left(\begin{smallmatrix}
                                         0 & (-1)^{tl} \\ (-1)^{t(l+1)} &0
                                        \end{smallmatrix}\right),\\
 \varrho_{t,s}(x^{2l}y^q)&=(-1)^{lt}\zeta_{2^k}^{2ls}\left(\begin{smallmatrix}
                                         \zeta_{2r+1}^{tq} & 0\\ 0 & \zeta_{2r+1}^{-tq}
                                        \end{smallmatrix}\right).
\end{align*}
Let $\chi_{t,s}$ be the character of the representation
$\varrho_{t,s}$. The corresponding values of the elements of $H$
under the character are
\begin{equation}\label{eq:abea1a}
\begin{split}
\chi_{t,s}(x^{2l})&=(-1)^{tl}2\zeta_{2^k}^{2ls},\\
\chi_{t,s}(x^{2l+1})&=0,\\
\chi_{t,s}(x^{2l}y^q)&=(-1)^{tl}\zeta_{2^k}^{2ls}(\zeta_{2r+1}^{tq}+\zeta_{2r+1}^{-tq}).
\end{split}
\end{equation}

Recall that if $n$ is not a divisor of $t$, the sum of the $t$-th
powers of the $n$-th roots of unity is zero, that is,
\begin{equation}\label{eq:srou=0}
\sum_{q=0}^{n-1}\zeta_n^{tq}=0,\qquad\text{so}\qquad \sum_{q=1}^{n-1}\zeta_n^{tq}=-1
\end{equation}
On the other hand, we have that
\begin{equation}\label{srou.neq0}
\sum_{q=0}^{n-1}\zeta_n^{tq}=n,\qquad\text{if $t$ is a multiple of $n$.}
\end{equation}
Hence, if $t\neq 0$, the inner product of the character $\chi_{t,s}$ with itself is
\begin{align*}
\langle \chi_{t,s},\chi_{t,s}\rangle&=\frac{1}{2^k(2r+1)}\left[\sum_{l=0}^{2^{k-1}-1}\bigl((-1)^{tl}2\zeta_{2^k}^{2ls}\bigr)\bigl((-1)^{tl}2\zeta_{2^k}^{-2ls}\bigr)\right.\\
&\ \left.+2\sum_{l=0}^{2^{k-1}-1}\sum_{q=1}^{r}\bigl((-1)^{tl}\zeta_{2^k}^{2ls}(\zeta_{2r+1}^{tq}+\zeta_{2r+1}^{-tq})\bigr)\bigl((-1)^{tl}\zeta_{2^k}^{-2ls}(\zeta_{2r+1}^{-tq}+\zeta_{2r+1}^{tq})\bigr)\right]\\
&=\frac{1}{2^k(2r+1)}\left[4(2^{k-1})+2\sum_{l=0}^{2^{k-1}-1}\sum_{q=1}^{r}(\zeta_{2r+1}^{tq}+\zeta_{2r+1}^{-tq})(\zeta_{2r+1}^{-tq}+\zeta_{2r+1}^{tq})\right]\\
&=\frac{1}{2^k(2r+1)}\left[4(2^{k-1})+2\sum_{l=0}^{2^{k-1}-1}\sum_{q=1}^{r}(2+\zeta_{2r+1}^{2tq}+\zeta_{2r+1}^{-2tq})\right]\\
&=\frac{1}{2^k(2r+1)}\left[4(2^{k-1})+4(2^{k-1}r)+2\sum_{l=0}^{2^{k-1}-1}\Bigl(\sum_{q=1}^{r}(\zeta_{2r+1}^{q})^{2t}+\sum_{q=1}^{r}(\zeta_{2r+1}^{-q})^{2t}\Bigr)\right]\\
\end{align*}
We have that $\zeta_{2r+1}^{-q}=\zeta_{2r+1}^{2r+1-q}$, thus we have that
\begin{equation}\label{eq:-q.2r}
 \sum_{q=1}^{r}\zeta_{2r+1}^{-q}=\sum_{q=1}^{r}\zeta_{2r+1}^{2r+1-q}=\sum_{q=r+1}^{2r}\zeta_{2r+1}^{q}.
\end{equation}
Therefore
\begin{align}
 \langle \chi_{t,s},\chi_{t,s}\rangle&=\frac{1}{2^k(2r+1)}\left[[2^{k+1}+2^{k+1}r+2\sum_{l=0}^{2^{k-1}-1}\sum_{q=1}^{2r}\zeta_{2r+1}^{2tq}\right]\label{eq:l1}\\
 &=\frac{1}{2^k(2r+1)}\left[2^{k+1}+2^{k+1}r+2\sum_{l=0}^{2^{k-1}-1}(-1)\right]\notag\\
 &=\frac{1}{2^k(2r+1)}\left[2^{k+1}+2^{k+1}r-2(2^{k-1})\right]\notag\\
 &=\frac{1}{2^k(2r+1)}\left[2^{k+1}+2^{k+1}r-2^{k}\right]\notag\\
 &=\frac{1}{2^k(2r+1)}\left[2^{k}(2+2r-1)\right]\notag\\
 &=\frac{1}{2^k(2r+1)}\left[2^{k}(2r+1)\right]=1\notag
\end{align}
This proves that the representions $\varrho_{t,s}$ with $t\neq0$ are irreducible.
In the case $t=0$, from \eqref{eq:l1} it is easy to see that %$\langle \chi_{0,s},\chi_{0,s}\rangle=2$,
\begin{align*}
\langle \chi_{0,s},\chi_{0,s}\rangle&=\frac{1}{2^k(2r+1)}\left[[2^{k+1}+2^{k+1}r+2\sum_{l=0}^{2^{k-1}-1}\sum_{q=1}^{2r}1\right]=2\\
%&=\frac{1}{2^k(2r+1)}\left[[2^{k+1}+2^{k+1}r+2\cdot 2^{k-1}\cdot 2r\right]\\
%&=\frac{1}{2^k(2r+1)}\left[2\cdot 2^{k}(2r+1)\right]=2,\\
\end{align*}
thus the two-dimensional representation $\varrho_{0,s}$ is not irreducible and it has to be the direct sum
of two one-dimensional representations.

\begin{proposition}\label{prop:t=0}
We have the following isomorphism of representations of $D_{2^k(2r+1)}$
\begin{equation*}
\varrho_{0,s}\cong\alpha_s\oplus\alpha_{2^{k-1}+s}.
\end{equation*}
\end{proposition}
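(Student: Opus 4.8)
The plan is to verify the claimed isomorphism at the level of characters, invoking the fact (stated earlier, \cite[Theorem~14.21]{James-Liebeck:RCG}) that two representations are isomorphic if and only if they have the same character. Since $\varrho_{0,s}$ was just shown to be reducible with $\langle\chi_{0,s},\chi_{0,s}\rangle=2$, it decomposes as a sum of two one-dimensional representations; the task is to identify \emph{which} two characters $\alpha_j$ appear. Because all one-dimensional representations factor through the abelianization $\Ab(D_{2^k(2r+1)})=\mathbf{C}_{2^k}$ (Lemma~\ref{lem:Ab.D}), and since $\alpha_j(y)=1$ for every $j$, the entire computation reduces to understanding $\varrho_{0,s}$ on the powers of $x$.

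First I would write out $\chi_{0,s}$ explicitly from the already-computed values in \eqref{eq:abea1a}. Setting $t=0$, those formulas collapse dramatically: $\chi_{0,s}(x^{2l})=2\zeta_{2^k}^{2ls}$, $\chi_{0,s}(x^{2l+1})=0$, and $\chi_{0,s}(x^{2l}y^q)=2\zeta_{2^k}^{2ls}$, since the factor $\zeta_{2r+1}^{tq}+\zeta_{2r+1}^{-tq}$ becomes $2$ when $t=0$. The key observation is that $\chi_{0,s}$ does not depend on $y$ at all, confirming that $\varrho_{0,s}$ factors through $\mathbf{C}_{2^k}$ and is thus a sum of two of the $\alpha_j$. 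Next I would compute the candidate character $\chi_{\alpha_s}+\chi_{\alpha_{2^{k-1}+s}}$: using $\alpha_j(x^m)=\zeta_{2^k}^{jm}$ and $\alpha_j(y)=1$, the odd-power values satisfy $\zeta_{2^k}^{s(2l+1)}+\zeta_{2^k}^{(2^{k-1}+s)(2l+1)}=\zeta_{2^k}^{s(2l+1)}\bigl(1+\zeta_{2^k}^{2^{k-1}(2l+1)}\bigr)$, and since $\zeta_{2^k}^{2^{k-1}}=-1$ raised to an odd exponent gives $-1$, this vanishes, matching $\chi_{0,s}(x^{2l+1})=0$. On even powers $x^{2l}$, the factor becomes $\zeta_{2^k}^{2ls}\bigl(1+\zeta_{2^k}^{2^{k-1}\cdot 2l}\bigr)=\zeta_{2^k}^{2ls}\bigl(1+(-1)^{2l}\bigr)=2\zeta_{2^k}^{2ls}$, matching $\chi_{0,s}(x^{2l})$; the same holds on $x^{2l}y^q$. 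Since the characters agree on a representative of every conjugacy class, they are equal, and the isomorphism follows.

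The shift by $2^{k-1}$ is the only nontrivial point, so I would make sure the index arithmetic is transparent: the two summands must be distinct (so that we genuinely get two one-dimensional pieces rather than a repeated one), which holds because $s\not\equiv 2^{k-1}+s \pmod{2^k}$, and their ``average phase'' on even powers of $x$ must reproduce $2\zeta_{2^k}^{2ls}$ while their contributions on odd powers must cancel — and it is precisely the choice of offset $2^{k-1}$, i.e.\ the unique element of order $2$ in $\mathbf{C}_{2^k}$, that forces the cancellation $1+(-1)^{\mathrm{odd}}=0$ on odd powers. I expect the main (though still mild) obstacle to be bookkeeping the parity of the exponent of $\zeta_{2^k}^{2^{k-1}}$ correctly across the three types of conjugacy-class representatives; everything else is a direct substitution into the character values already recorded in \eqref{eq:abea1a}.
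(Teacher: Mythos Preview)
Your proposal is correct and follows essentially the same approach as the paper's own proof: both verify the isomorphism by comparing characters on representatives of each conjugacy class, using $\zeta_{2^k}^{2^{k-1}}=-1$ to see that the contributions of $\alpha_s$ and $\alpha_{2^{k-1}+s}$ add on even powers of $x$ and cancel on odd powers. Your write-up is slightly more explicit about why the offset must be $2^{k-1}$, but the underlying computation is identical.
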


\begin{proof}
Let $\chi_s$ be the character of the one-dimensional representation $\alpha_s$. By \eqref{eq:sum.prod} the character $\chi_{\alpha_s\oplus\alpha_{2^{k-1}+s}}$ of the representation
$\alpha_s\oplus\alpha_{2^{k-1}+s}$ is equal to the sum of characters $\chi_s+\chi_{2^{k-1}+s}$. Comparing with \eqref{eq:abea1a} we have
\begin{align*}
\chi_s(x^{2l})+\chi_{2^{k-1}+s}(x^{2l})&=\zeta_{2^k}^{2ls}+\zeta_{2^k}^{2l(2^{k-1}+s)}=2\zeta_{2^k}^{2ls}=\chi_{0,s}(x^{2l}),\\
\chi_s(x^{2l+1})+\chi_{2^{k-1}+s}(x^{2l+1})&=\zeta_{2^k}^{(2l+1)s}+\zeta_{2^k}^{(2l+1)(2^{k-1}+s)}\\
&=\zeta_{2^k}^{(2l+1)s}+\zeta_{2^k}^{2^{k-1}(2l+1)}\zeta_{2^k}^{(2l+1)s}\\
&=\zeta_{2^k}^{(2l+1)s}-\zeta_{2^k}^{(2l+1)s}=0=\chi_{0,s}(x^{2l+1}),\\
\chi_s(x^{2l}y^q)+\chi_{2^{k-1}+s}(x^{2l}y^q)&=\zeta_{2^k}^{2ls}+\zeta_{2^k}^{2l(2^{k-1}+s)}=2\zeta_{2^k}^{2ls}=\chi_{0,s}(x^{2l}y^q).
\end{align*}
Since the representations $\alpha_s\oplus\alpha_{2^{k-1}+s}$ and $\varrho_{0,s}$ have the same character they are isomorphic.
\end{proof}

Analogously, since we shall need it later, we can prove the following proposition.

\begin{proposition}\label{prop:t=2r+1}
We have the following isomorphism of representations of $D_{2^k(2r+1)}$
\begin{equation*}
\varrho_{2r+1,s}\cong\alpha_{2^{k-2}+s}\oplus\alpha_{2^{k-1}+2^{k-2}+s}.
\end{equation*}
\end{proposition}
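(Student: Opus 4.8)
The plan is to follow verbatim the strategy used for Proposition~\ref{prop:t=0}: since isomorphism classes of representations are detected by their characters (see \eqref{eq:ortho} and the surrounding discussion), it suffices to show that $\varrho_{2r+1,s}$ and $\alpha_{2^{k-2}+s}\oplus\alpha_{2^{k-1}+2^{k-2}+s}$ have the same character on a set of representatives of the conjugacy classes, for which I would use the set $H$ fixed in subsubsection~\ref{irreducible-representations}.

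First I would specialise the character formula \eqref{eq:abea1a} to $t=2r+1$. Two facts make this case collapse: $2r+1$ is odd, so $(-1)^{(2r+1)l}=(-1)^{l}$; and $\zeta_{2r+1}^{(2r+1)q}=1$, so the bracket $\zeta_{2r+1}^{(2r+1)q}+\zeta_{2r+1}^{-(2r+1)q}$ reduces to $2$. This turns \eqref{eq:abea1a} into the three values $\chi_{2r+1,s}(x^{2l})=(-1)^{l}2\zeta_{2^k}^{2ls}$, $\chi_{2r+1,s}(x^{2l+1})=0$, and $\chi_{2r+1,s}(x^{2l}y^{q})=(-1)^{l}2\zeta_{2^k}^{2ls}$.

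Next I would compute the character of the right-hand side. Using $\alpha_{j}(x^{p}y^{q})=\zeta_{2^k}^{pj}$, the value of $\chi_{2^{k-2}+s}+\chi_{2^{k-1}+2^{k-2}+s}$ on $x^{p}y^{q}$ factors as $\zeta_{2^k}^{ps}\,\zeta_{2^k}^{p2^{k-2}}\bigl(1+\zeta_{2^k}^{p2^{k-1}}\bigr)$. The point is then to insert the two root-of-unity identities $\zeta_{2^k}^{2^{k-1}}=-1$ and $\zeta_{2^k}^{2^{k-2}}=i$ (both valid since $k>2$): the factor $1+(-1)^{p}$ annihilates the odd classes $x^{2l+1}$, matching the vanishing above, while for $p=2l$ one has $i^{2l}=(-1)^{l}$ and $1+(-1)^{2l}=2$, reproducing $(-1)^{l}2\zeta_{2^k}^{2ls}$ on both $x^{2l}$ and $x^{2l}y^{q}$. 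Comparing the two lists term by term over $H$ then gives the equality of characters, and hence the isomorphism.

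The calculations are routine; the only genuine content is recognising \emph{which} two one-dimensional characters to add, i.e.\ why the shift is by $2^{k-2}$ and by $2^{k-1}+2^{k-2}$. This is forced by the requirement that the sum vanish on the odd powers $x^{2l+1}$ and carry the sign $(-1)^{l}$ on the even ones: vanishing on odd $p$ forces the two indices to differ by the order-two element $2^{k-1}$ of $\mathbf{C}_{2^{k}}$ (so the $(-1)^{p}$ cancellation occurs), while the common shift $2^{k-2}$ supplies the factor $i^{p}$ whose even part $(-1)^{l}$ matches the sign coming from $(-1)^{(2r+1)l}$ in $\chi_{2r+1,s}$. I expect this bookkeeping of the powers of $\zeta_{2^k}$ to be the only place where a sign slip could occur.
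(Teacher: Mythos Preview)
Your proposal is correct and follows essentially the same approach as the paper: both specialise the character formula \eqref{eq:abea1a} at $t=2r+1$, compute the sum $\chi_{2^{k-2}+s}+\chi_{2^{k-1}+2^{k-2}+s}$ on the conjugacy class representatives in $H$, and verify the two agree class by class. Your factorisation $\zeta_{2^k}^{ps}\,\zeta_{2^k}^{p2^{k-2}}(1+\zeta_{2^k}^{p2^{k-1}})$ and the accompanying explanation of why the shifts $2^{k-2}$ and $2^{k-1}+2^{k-2}$ are forced make the mechanism more transparent than the paper's line-by-line check, but the substance is identical.
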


\begin{proof}
Let $\chi_s$ be the character of the one-dimensional representation $\alpha_s$. By \eqref{eq:sum.prod} the character $\chi_{\alpha_{2^{k-2}+s}\oplus\alpha_{2^{k-1}+2^{k-2}+s}}$ of the representation
$\alpha_{2^{k-2}+s}\oplus\alpha_{2^{k-1}+2^{k-2}+s}$ is equal to the sum of characters $\chi_{2^{k-2}+s}+\chi_{2^{k-1}+2^{k-2}+s}$. Comparing with \eqref{eq:abea1a} we have
\begin{align*}
\chi_{2^{k-2}+s}(x^{2l})+\chi_{2^{k-1}+2^{k-2}+s}(x^{2l})&=\zeta_{2^k}^{2l(2^{k-2}+s)}+\zeta_{2^k}^{2l(2^{k-1}+2^{k-2}+s)}\\
&=(-1)^l2\zeta_{2^k}^{2l(2^{k-2}+s)}=\chi_{2r+1,s}(x^{2l}),\\
\chi_{2^{k-2}+s}(x^{2l+1})+\chi_{2^{k-1}+2^{k-2}+s}(x^{2l+1})&=\zeta_{2^k}^{(2l+1)(2^{k-2}+s)}+\zeta_{2^k}^{(2l+1)(2^{k-1}+2^{k-2}+s)}\\
&=\zeta_{2^k}^{(2l+1)(2^{k-2}+s)}+\zeta_{2^k}^{2^{k-1}(2l+1)}\zeta_{2^k}^{(2l+1)(2^{k-2}+s)}\\
&=\zeta_{2^k}^{(2l+1)(2^{k-2}+s)}-\zeta_{2^k}^{(2l+1)(2^{k-2}+s)}\\
&=0=\chi_{2r+1,s}(x^{2l+1}),\\
\chi_{2^{k-2}+s}(x^{2l}y^q)+\chi_{2^{k-1}+2^{k-2}+s}(x^{2l}y^q)&=\zeta_{2^k}^{2l(2^{k-2}+s)}+\zeta_{2^k}^{2l(2^{k-1}+2^{k-2}+s)}\\
&=(-1)^l2\zeta_{2^k}^{2l(2^{k-2}+s)}=\chi_{2r+1,s}(x^{2l}y^q).
\end{align*}
Since the representations $\alpha_{2^{k-2}+s}\oplus\alpha_{2^{k-1}+2^{k-2}+s}$ and $\varrho_{2r+1,s}$ have the same character they are isomorphic.
\end{proof}

Naturally we have that $t=1,\dots,2r$ and $s=0,\dots,2^{k}-1$, but taking all this values of $t$ and $s$ gives $2^{k+1}r$ irreducible representions, which are
much more than conjugacy classes, hence, some of them should be isomorphic.

\begin{proposition}[{\cite[(3.5)]{Arciniega-etal:CRMQSAPS}}]\label{prop:2-dim.iso}
We have the following isomorphism of representations of $D_{2^k(2r+1)}$
\begin{equation}\label{eq:8c274}
\varrho_{t,s}\cong\varrho_{t,2^{k-1}+s},\quad\text{and}\quad \varrho_{2r+1-t,s}\cong\varrho_{t,2^{k-2}+s}.
\end{equation}
\end{proposition}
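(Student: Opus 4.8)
The plan is to mimic the proofs of Propositions~\ref{prop:t=0} and~\ref{prop:t=2r+1}: since isomorphism classes of representations are determined by their characters \cite[Theorem~14.21]{James-Liebeck:RCG}, it suffices to show that the two sides of each claimed isomorphism in \eqref{eq:8c274} have the same character, and for this it is enough to evaluate the characters on the representatives $x^{2l}$, $x^{2l+1}$, $x^{2l}y^q$ of the conjugacy classes listed in the set $H$, using the formulas \eqref{eq:abea1a}.

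For the first isomorphism I would compute $\chi_{t,2^{k-1}+s}$ directly from \eqref{eq:abea1a}. The only change relative to $\chi_{t,s}$ is the replacement of $\zeta_{2^k}^{2ls}$ by $\zeta_{2^k}^{2l(2^{k-1}+s)}$, and since $\zeta_{2^k}^{2l\cdot 2^{k-1}}=\zeta_{2^k}^{l\cdot 2^{k}}=1$ this extra factor is trivial on every class. Hence $\chi_{t,2^{k-1}+s}=\chi_{t,s}$ term by term, giving $\varrho_{t,s}\cong\varrho_{t,2^{k-1}+s}$. (Equivalently, conjugation by $\mathrm{diag}(1,-1)$ sends $\varrho_{t,s}$ to $\varrho_{t,2^{k-1}+s}$, since it fixes $\varrho_{t,s}(y)$ and negates $\varrho_{t,s}(x)$, while $\zeta_{2^k}^{2^{k-1}}=-1$.)

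For the second isomorphism I would again compare characters, now tracking two simultaneous effects. On the $\varrho_{t,2^{k-2}+s}$ side the factor $\zeta_{2^k}^{2l\cdot 2^{k-2}}=\zeta_{2^k}^{l\cdot 2^{k-1}}=(-1)^l$ appears, using $\zeta_{2^k}^{2^{k-1}}=-1$. On the $\varrho_{2r+1-t,s}$ side two facts intervene: because $2r+1$ is odd, $(-1)^{(2r+1-t)l}=(-1)^{l}(-1)^{tl}$, producing exactly the same $(-1)^l$; and because $\zeta_{2r+1}^{2r+1}=1$, we have $\zeta_{2r+1}^{(2r+1-t)q}=\zeta_{2r+1}^{-tq}$ and $\zeta_{2r+1}^{-(2r+1-t)q}=\zeta_{2r+1}^{tq}$, so the symmetric sum $\zeta_{2r+1}^{tq}+\zeta_{2r+1}^{-tq}$ is unchanged. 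Matching these on each of the three class types, and noting that both characters vanish on the odd classes $x^{2l+1}$, yields $\chi_{2r+1-t,s}=\chi_{t,2^{k-2}+s}$ and hence the claimed isomorphism.

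I expect the second isomorphism to be the only delicate point, precisely because the shift $s\mapsto 2^{k-2}+s$ and the reflection $t\mapsto 2r+1-t$ each independently introduce a sign $(-1)^l$, and the argument works only because these two signs coincide; care is needed to confirm that the $y$-eigenvalue swap and the parity flip combine correctly rather than cancelling or doubling, and to check that the odd classes stay zero on both sides. If one prefers an explicit intertwiner, the anti-diagonal matrix $\left(\begin{smallmatrix}0&1\\ i(-1)^t&0\end{smallmatrix}\right)$ conjugates $\varrho_{2r+1-t,s}$ to $\varrho_{t,2^{k-2}+s}$ (using $\zeta_{2^k}^{2^{k-2}}=i$), but verifying the intertwining relation on $x$ is no shorter than the character computation.
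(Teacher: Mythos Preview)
Your proposal is correct and follows essentially the same route as the paper's proof: both isomorphisms are established by comparing the characters $\chi_{t,s}$ on the class representatives $x^{2l}$, $x^{2l+1}$, $x^{2l}y^q$ via the formulas \eqref{eq:abea1a}, using exactly the identities you list ($\zeta_{2^k}^{2l\cdot 2^{k-1}}=1$ for the first, and the pair $(-1)^{(2r+1-t)l}=(-1)^l(-1)^{tl}$, $\zeta_{2r+1}^{\pm(2r+1-t)q}=\zeta_{2r+1}^{\mp tq}$ together with $\zeta_{2^k}^{2l\cdot 2^{k-2}}=(-1)^l$ for the second). Your additional remarks on explicit intertwiners are correct but go beyond what the paper does.
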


\begin{proof}
From \eqref{eq:abea1a} we have
\begin{equation*}
\begin{split}
\chi_{t,2^{k-1}+s}(x^{2l})&=(-1)^{tl}2\zeta_{2^k}^{2l(2^{k-1}+s)}=\chi_{t,s}(x^{2l}),\\%(-1)^{tl}2\zeta_{2^k}^{2^kl}\zeta_{2^k}^{2ls},\\
\chi_{t,2^{k-1}+s}(x^{2l+1})&=0=\chi_{t,s}(x^{2l+1}),\\
\chi_{t,2^{k-1}+s}(x^{2l}y^q)&=(-1)^{tl}\zeta_{2^k}^{2l(2^{k-1}+s)}(\zeta_{2r+1}^{tq}+\zeta_{2r+1}^{-tq})=\chi_{t,s}(x^{2l}y^q).
\end{split}
\end{equation*}
Since the representations $\varrho_{t,s}$ and $\varrho_{t,2^{k-1}+s}$ have the same character they are isomorphic.
\begin{equation*}
\begin{split}
\chi_{2r+1-t,s}(x^{2l})&=(-1)^{(2r+1-t)l}2\zeta_{2^k}^{2ls},\\
&=(-1)^{l}(-1)^{tl}2\zeta_{2^k}^{2ls},\\
&=\zeta_{2^k}^{2^{k-1}l}(-1)^{tl}2\zeta_{2^k}^{2ls},\\
&=(-1)^{tl}2\zeta_{2^k}^{2l(2^{k-2}+s)}=\chi_{t,2^{k-2}+s}(x^{2l}),\\
\chi_{2r+1-t,s}(x^{2l+1})&=0=\chi_{t,2^{k-2}+s}(x^{2l+1}),\\
\chi_{2r+1-t,s}(x^{2l}y^q)&=(-1)^{(2r+1-t)l}\zeta_{2^k}^{2ls}(\zeta_{2r+1}^{(2r+1-t)q}+\zeta_{2r+1}^{-(2r+1-t)q}),\\
&=(-1)^{tl}\zeta_{2^k}^{2l(2^{k-2}+s)}(\zeta_{2r+1}^{tq}+\zeta_{2r+1}^{-tq})=\chi_{t,2^{k-2}+s}(x^{2l}y^q).
\end{split}
\end{equation*}
Since the representations $\varrho_{2r+1-t,s}$ and $\varrho_{t,2^{k-2}+s}$ have the same character they are isomorphic.
\end{proof}

From Proposition~\ref{prop:2-dim.iso} we can take $t=1,\dots,2r$ and $s=0,\dots,2^{k-2}-1$
(or equivalently $t=1,\dots,r$ and $s=0,\dots,2^{k-1}-1$), so they are $2^{k-1}r$ of them, which together with the $2^k$ one-dimensional irreducible representations
gives a total of $2^{k-1}(r+2)$ irreducible representations. Thus, this is the complete list of
irreducible representations of $D_{2^m(2k+1)}$. Another way to see
this is the following. We have listed $2^k$ one-dimensional
representations and $2^{k-1}r$ two-dimensional representations. The
sum of the squares of their ranks is
\begin{equation*}
    2^k + 2^2 2^{k-1}r = 2^k + 2^{k+1}r=2^k(2r+1),
\end{equation*}
which is the order of $D_{2^k(2r+1)}$. By \eqref{eq:irr.dim.ord} there can not exist any other irreducible representation.
The character table of $D_{2^k(2r+1)}$ is given in Table~\ref{tab:CT.D}.

\begin{table}[H]
\begin{equation*}
\setlength{\extrarowheight}{5pt}
\begin{array}{|c|c|c|c|}\hline
\text{\small Class}& \mathsf{1}_l & \mathsf{2}_{l,q} & \mathsf{(2r+1)}_l\\\hline
\chi_j & \zeta_{2^k}^{2lj} &\zeta_{2^k}^{2lj} & \zeta_{2^k}^{(2l+1)j}\\\hline
\chi_{t,s} & (-1)^{tl}2\zeta_{2^k}^{2ls} & (-1)^{tl}\zeta_{2^k}^{2ls}(\zeta_{2r+1}^{tq}+\zeta_{2r+1}^{-tq}) & 0\\\hline
\multicolumn{4}{l}{\text{Note: $\zeta_{n}=e^{\frac{2\pi i}{n}}$, $0\leq j\leq 2^k-1$, $1\leq t\leq 2r$, $0\leq s\leq 2^{k-2}-1$,}}\\
\multicolumn{4}{l}{\text{\hspace{1.35cm} $0\leq l\leq 2^{k-1}-1$, $1\leq q\leq r$.}}
\end{array}
\end{equation*}
\caption{Character table of $D_{2^k(2r+1)}$.}\label{tab:CT.D}
\end{table}

\subsection{Groups $P'_{8\cdot 3^k}$}\label{definition}

This family of groups belongs to the list of finite subgroup of $SO(4)$ acting freely on $\mathbb{S}^3$ see \cite[\S1.7]{Aschenbrenner-etal:3MG}.
The group $P'_{8\cdot 3^k}$ corresponds to the small tetrahedral group $\mathbb{T}_{3^{k-1}}$ in Theorem~\ref{thm:sfs}.

\subsubsection{Presentation}

The group $P'_{8\cdot 3^k}$ has order $8\cdot 3^k$ and a presentation is
\begin{equation}\label{eq:271bf2}
P'_{8\cdot 3^k}:=\langle x,y,z\mid x^2=(xy)^2=y^2,zxz^{-1}=y, zyz^{-1}=xy, z^{3^k}=1\rangle,
\end{equation}
where $k\geq2$.

\begin{remark}
When $k=1$ there is an isomorphism between $P'_{24}$ and $\mathrm{B}\mathbf{T}$, the binary tetrahedral group \cite[Lemma~2.1 \& Remark~2.3]{Arciniega-etal:CRMQSAPS}.
\end{remark}

\begin{lemma}[{\cite[Lemma~3.1]{Arciniega-etal:CRMQSAPS}}]\label{lem:Ab.P}
The abelianization of $P'_{8\cdot 3^k}$ is $\mathrm{Ab}(P'_{8\cdot 3^k})=\mathbf{C}_{3^k}$, where  $\mathrm{Ab}(z)$ is the generator of $\mathbf{C}_{3^k}$ and $\mathrm{Ab}(x)=\mathrm{Ab}(y)=1$.
\end{lemma}

\subsubsection{Consequences of the relations}\label{consequences-of-the-relations}

The following consequences from the relations in presentation \eqref{eq:271bf2} will be useful.

From the first relation $x^2=xyxy=y^2$ we get
\begin{equation}\label{eq:907aee}
x=yxy,\quad y=xyx,
\end{equation}
which imply
\begin{equation}\label{eq:d71836}
yx=xy^{-1},\quad y^{-1}x=xy,\quad yx^{-1}=xy,\quad y^{-1}x^{-1}=x^{-1}y=xy^{-1}.
\end{equation}
From \eqref{eq:907aee} we also get
\begin{equation}\label{eq:b4f557}
xyx^{-1}=y^{-1},\quad yxy^{-1}=x^{-1}.
\end{equation}
From the second relation $zxz^{-1}=y$ in \eqref{eq:271bf2} we get
\begin{equation}\label{eq:ed1372}
zx=yz,\quad zx^{-1}=y^{-1}z,\quad z^{-1}y=xz^{-1},\quad z^{-1}y^{-1}=x^{-1}z^{-1}.
\end{equation}
Also from the third relation in \eqref{eq:ed1372} and the third relation in \eqref{eq:d71836} we get
\begin{equation*}
xz^{-1}x^{-1}=z^{-1}yx^{-1}=z^{-1}xy,
\end{equation*}
thus $xz^{-1}x^{-1}z=z^{-1}xyz=y$ and we get
\begin{equation}\label{eq:5d1b38}
xzx^{-1}=zy^{-1}.
\end{equation}
From the third relation $zyz^{-1}=xy$ in \eqref{eq:271bf2} we get
\begin{equation}\label{eq:35c845}
\begin{aligned}
z^{-1}x&=yz^{-1}y^{-1}=yx^{-1}z^{-1}, & z^{-1}x^{-1}&=y^{-1}z^{-1}y^{-1}=x^{-1}yz^{-1},\\
zy&=xyz, & zy^{-1}&=y^{-1}x^{-1}z=xy^{-1}z.
\end{aligned}
\end{equation}
Taking the inverse of the first relation in \eqref{eq:35c845} we
have
\begin{equation}\label{eq:4a1c10}
yzy^{-1}=x^{-1}z.
\end{equation}
Using the consequences \eqref{eq:d71836},
\eqref{eq:ed1372} and \eqref{eq:35c845}, any word on the
generators can be taken to the form $x^py^qz^r$, with $p=0,1,2,3$,
$q=0,1$ and $r=0,\dots 3^{k}-1$.

\subsubsection{Conjugacy classes}\label{conjugacy-classes.P}

We are going to compute the conjugacy classes using the second and third
relations in \eqref{eq:271bf2} and \eqref{eq:b4f557},
\eqref{eq:5d1b38}, \eqref{eq:4a1c10}.

\begin{lemma}
We have
\begin{equation}\label{eq:81d8c1}
z^3x^pz^{-3}=x^p\quad\text{and}\quad z^3y^pz^{-3}=y^p.
\end{equation} In fact, conjugating $x^p$ several times by $z$ we get the
following cycle:
\begin{equation*}
x^p\mapsto y^p\mapsto (xy)^p\mapsto x^p.
\end{equation*}
\end{lemma}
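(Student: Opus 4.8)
The plan is to track how conjugation by $z$ moves the generators, building the three-term cycle one step at a time from the two relations $zxz^{-1}=y$ and $zyz^{-1}=xy$ in \eqref{eq:271bf2}, and then to raise everything to the $p$-th power at the end, since conjugation by a fixed element is a group automorphism. Concretely, it suffices to prove $z^3xz^{-3}=x$ and $z^3yz^{-3}=y$; the general statement then follows from $z^3x^pz^{-3}=(z^3xz^{-3})^p=x^p$ and likewise for $y$.

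For the first computation I would read off $zxz^{-1}=y$ directly from the second relation. Conjugating once more and applying the third relation gives $z^2xz^{-2}=zyz^{-1}=xy$. Conjugating a third time, I distribute over the product and use both relations at once: $z^3xz^{-3}=z(xy)z^{-1}=(zxz^{-1})(zyz^{-1})=y\cdot xy=yxy$. The cycle now closes because of the quaternionic relation $yxy=x$ recorded in \eqref{eq:907aee}, which yields $z^3xz^{-3}=x$. Reading off the intermediate images $x\mapsto y\mapsto xy\mapsto x$ and raising to the $p$-th power gives exactly the displayed cycle $x^p\mapsto y^p\mapsto (xy)^p\mapsto x^p$.

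The $y$ case requires no new work: $y^p$ sits in the same cycle, one step ahead of $x^p$, so $y^p\mapsto (xy)^p\mapsto x^p\mapsto y^p$ and hence $z^3yz^{-3}=y$; alternatively one repeats the computation, finding $z^2yz^{-2}=y\cdot xy=yxy=x$ and then $z^3yz^{-3}=zxz^{-1}=y$. The only genuine ingredient beyond mechanical substitution is the identity $yxy=x$, which is where the relations $x^2=(xy)^2=y^2$ enter through \eqref{eq:907aee}; I expect this closing step, rather than the bookkeeping, to be the one place the argument could slip, so I would verify it against \eqref{eq:907aee} before concluding.
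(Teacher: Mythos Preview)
Your proof is correct and follows essentially the same route as the paper: both arguments track the cycle $x\mapsto y\mapsto xy\mapsto yxy=x$ under conjugation by $z$, invoking the identity $yxy=x$ from \eqref{eq:907aee} to close it, and then pass to $p$-th powers via the automorphism property. The paper writes the chain directly for $x^p$ in one line, while you do the case $p=1$ first and raise to powers at the end, but this is only a cosmetic difference.
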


\begin{proof}
Using the second and third
relations in \eqref{eq:271bf2} and the first one in
\eqref{eq:907aee} we get
\begin{equation*}
z^3x^pz^{-3}=z^2y^pz^{-2}=z(xy)^pz^{-1}=(yxy)^p=x^p.\qedhere
\end{equation*}
\end{proof}

\begin{lemma}
We have
\begin{equation*}
x^2z^rx^{-2}=z^r.
\end{equation*}
In fact, conjugating $z^r$ several times by $x$ we get the
following cycle:
\begin{equation}\label{eq:4eeb11}
z^r\mapsto (zy^{-1})^r\mapsto z^r.
\end{equation}
\end{lemma}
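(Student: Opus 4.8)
The plan is to produce the two-step cycle directly from the relations already derived, and then read off the identity $x^2z^rx^{-2}=z^r$ as its consequence. The only relations needed are \eqref{eq:5d1b38}, namely $xzx^{-1}=zy^{-1}$, and \eqref{eq:b4f557}, namely $xyx^{-1}=y^{-1}$.

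First I would compute the first arrow of the cycle. Since conjugation by a fixed element is a group automorphism, it commutes with taking powers, so
\begin{equation*}
xz^rx^{-1}=(xzx^{-1})^r=(zy^{-1})^r,
\end{equation*}
using \eqref{eq:5d1b38}. This establishes the arrow $z^r\mapsto(zy^{-1})^r$.

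For the second arrow I would conjugate the word $zy^{-1}$ by $x$, distributing the conjugation across the product:
\begin{equation*}
x(zy^{-1})x^{-1}=(xzx^{-1})(xy^{-1}x^{-1}).
\end{equation*}
The first factor equals $zy^{-1}$ by \eqref{eq:5d1b38}, and the second factor is the inverse of the relation $xyx^{-1}=y^{-1}$, hence $xy^{-1}x^{-1}=y$; multiplying the two gives $x(zy^{-1})x^{-1}=zy^{-1}\cdot y=z$. Raising to the $r$-th power exactly as before yields $x(zy^{-1})^rx^{-1}=z^r$, which is the arrow $(zy^{-1})^r\mapsto z^r$ and completes the cycle \eqref{eq:4eeb11}.

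Finally, composing the two conjugations gives the displayed identity: $x^2z^rx^{-2}=x\bigl(xz^rx^{-1}\bigr)x^{-1}=x(zy^{-1})^rx^{-1}=z^r$. There is no genuine obstacle here; the computation is elementary once \eqref{eq:5d1b38} and \eqref{eq:b4f557} are in hand. The only point requiring care is the bookkeeping in the two conjugation steps — in particular correctly using that conjugation distributes over the product $zy^{-1}$ and commutes with the $r$-th power — so that the intermediate word $(zy^{-1})^r$ collapses precisely back to $z^r$ rather than to some other expression.
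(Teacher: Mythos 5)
Your proof is correct and is essentially the paper's own argument: both conjugate $z^r$ by $x$ twice, using \eqref{eq:5d1b38} for the first step and the inverse of the first relation in \eqref{eq:b4f557} (i.e.\ $xy^{-1}x^{-1}=y$) for the second, so that $x^2z^rx^{-2}=x(zy^{-1})^rx^{-1}=(zy^{-1}y)^r=z^r$. You merely spell out the intermediate bookkeeping more explicitly than the paper does.
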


\begin{proof}
Using \eqref{eq:5d1b38}
and the inverse of the first relation in \eqref{eq:b4f557} we
have
\begin{equation*}
x^2z^rx^{-2}=x(zy^{-1})^rx^{-1}=(zy^{-1}y)^r=z^r.\qedhere
\end{equation*}
\end{proof}

\begin{lemma}\label{lem:394310}
We have
\begin{equation}\label{eq:3f5e6e}
xz^rx^{-1}=\begin{cases}
z^r, & r\equiv 0\mod 3,\\
xy^{-1}z^r, & r\equiv 1\mod 3,\\
yz^r, & r\equiv 2\mod 3.
\end{cases}
\end{equation}
\end{lemma}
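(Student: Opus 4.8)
The plan is to reduce everything to powers of the single element $zy^{-1}$ and then to exploit the $3$-fold periodicity coming from the fact that $z^3$ commutes with $x$ and $y$. First I would apply the conjugation identity $xz^rx^{-1}=(xzx^{-1})^r$ and substitute $xzx^{-1}=zy^{-1}$ from \eqref{eq:5d1b38}, so that the lemma becomes equivalent to showing
\begin{equation*}
(zy^{-1})^r=\begin{cases} z^r, & r\equiv 0 \pmod 3,\\ xy^{-1}z^r, & r\equiv 1 \pmod 3,\\ yz^r, & r\equiv 2 \pmod 3.\end{cases}
\end{equation*}

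I would then establish the three base cases $r=1,2,3$. The case $r=1$ is immediate, since $zy^{-1}=xy^{-1}z$ is already recorded in \eqref{eq:35c845}. For $r=2$ I would write $(zy^{-1})^2=z(y^{-1}z)y^{-1}$ and use $y^{-1}z=zx^{-1}$ (the inverse of the third identity in \eqref{eq:ed1372}) followed by $x^{-1}y^{-1}=xy$ (a consequence of \eqref{eq:d71836}) to reach $(zy^{-1})^2=z^2xy$; a short computation using $zxz^{-1}=y$, $zyz^{-1}=xy$ and the relations in \eqref{eq:907aee} then shows $z^2xyz^{-2}=y$, whence $(zy^{-1})^2=yz^2$. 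For $r=3$ I would combine the previous two steps, $(zy^{-1})^3=(zy^{-1})^2(zy^{-1})=yz^2\cdot zy^{-1}=yz^3y^{-1}$, and cancel using \eqref{eq:81d8c1}, which guarantees that $z^3$ commutes with $y^{-1}$, to obtain $(zy^{-1})^3=z^3$.

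Finally I would bootstrap to arbitrary $r$. Writing $r=3m+i$ with $i\in\{0,1,2\}$ and using $(zy^{-1})^{3m}=\bigl((zy^{-1})^3\bigr)^m=z^{3m}$, I obtain $(zy^{-1})^{3m+1}=(zy^{-1})z^{3m}=(xy^{-1}z)z^{3m}=xy^{-1}z^{3m+1}$ and $(zy^{-1})^{3m+2}=(zy^{-1})^2z^{3m}=(yz^2)z^{3m}=yz^{3m+2}$, where in each case I have merely merged adjacent powers of $z$; these are exactly the three branches. The main obstacle is the case $r=2$: it is the only base case that forces several relations to be chained together — the commutations from \eqref{eq:ed1372} and \eqref{eq:d71836} to reach $z^2xy$, and then the conjugation identity $z^2xyz^{-2}=y$ to convert the stray factor $xy$ into the prefactor $y$ — so this is where an ordering or sign slip is most likely. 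Once $(zy^{-1})^2=yz^2$ and $(zy^{-1})^3=z^3$ are secured, the periodicity argument is entirely routine.
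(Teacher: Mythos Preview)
Your proof is correct and follows essentially the same approach as the paper: establish the three base cases $r=1,2,3$ by direct manipulation of the relations, then reduce the general case modulo $3$ using the centrality of $z^3$. The only cosmetic difference is that you first rewrite $xz^rx^{-1}=(zy^{-1})^r$ via \eqref{eq:5d1b38} and then compute powers of $zy^{-1}$, whereas the paper computes $xz^rx^{-1}$ directly for $r=1,2,3$; the underlying relation-chasing is the same.
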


\begin{proof}
Using the second equation in \eqref{eq:ed1372} we get
\begin{equation}\label{eq:394310}
xzx^{-1}=xy^{-1}z.
\end{equation}

Using the second equation in \eqref{eq:ed1372}, the fourth
equation in \eqref{eq:35c845} and the fourth one in
\eqref{eq:d71836} we obtain
\begin{equation}\label{eq:21208d}
xz^2x^{-1}=xzy^{-1}z=xy^{-1}x^{-1}z^2=xx^{-1}yz^2=yz^2.
\end{equation}

Notice that from \eqref{eq:21208d} we get that
$z^2x^{-1}=x^{-1}yz^2$, by second equation in
\eqref{eq:ed1372}, third one in \eqref{eq:35c845} and
\eqref{eq:b4f557}
\begin{equation}\label{eq:d177a2}
xz^3x^{-1}=xzx^{-1}yz^2=xy^{-1}zyz^2=xy^{-1}xyz^3=xx^{-1}z^3=z^3.
\end{equation}

Now, let $r=3k+l$, with $l=0,1,2$, then by \eqref{eq:394310},
\eqref{eq:21208d} and \eqref{eq:d177a2} we have
\begin{equation*}
xz^rx^{-1}=xz^lx^{-1}xz^{3k}x^{-1}=xz^lx^{-1}z^{3k}=\begin{cases}
z^r, & l=0,\\
xy^{-1}z^r, & l=1,\\
yz^r, & l=2.
\end{cases}
\end{equation*}
This proof the lemma.
\end{proof}

\begin{lemma}
We have
\begin{equation}\label{eq:1f7a51}
y^2z^ry^{-2}=z^r.
\end{equation} In fact, conjugating $z^r$ several times by $y$ we get the
following cycle:
\begin{equation*}
z^r\mapsto (x^{-1}z)^r\mapsto z^r.
\end{equation*}
\end{lemma}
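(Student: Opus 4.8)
The plan is to mirror the proof of the immediately preceding lemma on conjugation by $x$, replacing the relation \eqref{eq:5d1b38} throughout by its counterpart \eqref{eq:4a1c10}. First I would record the single-step conjugation $yzy^{-1}=x^{-1}z$, which is precisely \eqref{eq:4a1c10}, obtained earlier by inverting the first relation in \eqref{eq:35c845}. This plays exactly the role that $xzx^{-1}=zy^{-1}$ played for $x$.

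Next, since $w\mapsto ywy^{-1}$ is a group automorphism, I would raise this relation to the $r$-th power to obtain $yz^ry^{-1}=(yzy^{-1})^r=(x^{-1}z)^r$, which already establishes the first arrow $z^r\mapsto(x^{-1}z)^r$ of the claimed cycle. For the closing arrow I would conjugate once more by $y$, and the key computation is $y(x^{-1}z)y^{-1}=(yx^{-1}y^{-1})(yzy^{-1})$. Here the first factor is $yx^{-1}y^{-1}=(yxy^{-1})^{-1}=x$, using the second relation $yxy^{-1}=x^{-1}$ in \eqref{eq:b4f557}, while the second factor is again $x^{-1}z$ by \eqref{eq:4a1c10}; multiplying gives $x\cdot x^{-1}z=z$. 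Raising to the $r$-th power yields $y(x^{-1}z)^ry^{-1}=z^r$, so that $y^2z^ry^{-2}=y\bigl((x^{-1}z)^r\bigr)y^{-1}=z^r$, which proves \eqref{eq:1f7a51} and closes the cycle $z^r\mapsto(x^{-1}z)^r\mapsto z^r$. In the condensed one-line form of the previous lemma this reads $y^2z^ry^{-2}=y(x^{-1}z)^ry^{-1}=(xx^{-1}z)^r=z^r$.

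There is essentially no genuine obstacle here beyond bookkeeping: once the two relations $yzy^{-1}=x^{-1}z$ and $yxy^{-1}=x^{-1}$ are in hand, the argument is a two-line commutator cancellation. The only point requiring a moment's care is matching the orientation of the inner automorphism with the already-derived relations, in particular noting that $yx^{-1}y^{-1}=x$ (and not $x^{-1}$), after which the cancellation $x\cdot x^{-1}=1$ is immediate, exactly paralleling the cancellation $zy^{-1}\cdot y=z$ that drove the $x$-conjugation lemma.
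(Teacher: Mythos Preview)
Your proof is correct and follows exactly the same approach as the paper: use \eqref{eq:4a1c10} to get $yz^ry^{-1}=(x^{-1}z)^r$, then conjugate once more using the inverse of the second relation in \eqref{eq:b4f557} to obtain $y(x^{-1}z)^ry^{-1}=(xx^{-1}z)^r=z^r$. Your condensed one-line form is in fact verbatim the paper's proof.
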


\begin{proof}
Using \eqref{eq:4a1c10}
and the inverse of the second relation in \eqref{eq:b4f557} we
have
\begin{equation*}
y^2z^ry^{-2}=y(x^{-1}z)^ry^{-1}=(xx^{-1}z)^r=z^r.\qedhere
\end{equation*}
\end{proof}

\begin{lemma}
We have
\begin{equation}\label{eq:5a5486}
yz^ry^{-1}=\begin{cases}
z^r & r\equiv0\mod 3,\\
x^{-1}z^r & r\equiv1\mod 3,\\
xyz^r & r\equiv2\mod 3.
\end{cases}
\end{equation}
\end{lemma}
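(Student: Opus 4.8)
The plan is to prove \eqref{eq:5a5486} in direct parallel with the proof of Lemma~\ref{lem:394310}, since the statement for conjugation of $z^r$ by $y$ is the exact analogue of the statement for conjugation by $x$. First I would establish the three base cases $r=0,1,2$ separately, and then reduce the general exponent $r=3k+l$ to these using the relation \eqref{eq:1f7a51}, namely $y^2z^ry^{-2}=z^r$ (which more precisely gives $y z^{3k} y^{-1}=z^{3k}$, the analogue of \eqref{eq:d177a2}).

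For the base cases: the case $r\equiv 0$ is immediate. For $r\equiv 1$, I would compute $yzy^{-1}$ directly from \eqref{eq:4a1c10}, which already states $yzy^{-1}=x^{-1}z$; this is exactly the claimed value $x^{-1}z^r$ when $r=1$. For $r\equiv 2$, I would compute $yz^2y^{-1}=(yzy^{-1})(yzy^{-1})=x^{-1}zx^{-1}z$, and then push the middle $x^{-1}$ through $z$ using the relations in \eqref{eq:ed1372} and \eqref{eq:35c845} (specifically the rules relating $z^{-1}x^{-1}$, $zx^{-1}$, or equivalently moving $x^{-1}$ past $z$), combined with \eqref{eq:d71836}, to collapse the word to the form $xyz^2$. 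This is the computational heart of the argument and mirrors the chain of substitutions in \eqref{eq:21208d}.

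Then, writing $r=3k+l$ with $l\in\{0,1,2\}$, I would factor
\begin{equation*}
yz^ry^{-1}=yz^ly^{-1}\cdot yz^{3k}y^{-1}=yz^ly^{-1}z^{3k},
\end{equation*}
where the last equality uses $yz^{3k}y^{-1}=z^{3k}$ coming from \eqref{eq:1f7a51}. Substituting the three base-case values for $yz^ly^{-1}$ yields the stated case distinction, completing the proof.

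The main obstacle I anticipate is the $r\equiv 2$ base case: unlike the $r\equiv 1$ case, which is handed to us directly by \eqref{eq:4a1c10}, the quadratic case requires carefully threading $x^{-1}$ past $z$ and applying the $x^2=y^2$-type relations in the correct order to simplify $x^{-1}zx^{-1}z$ to $xyz^2$. Getting the sequence of substitutions right—choosing exactly which of the several rewriting rules in \eqref{eq:d71836}, \eqref{eq:ed1372}, and \eqref{eq:35c845} to apply at each step—is the delicate bookkeeping, though each individual step is routine given the relations already derived.
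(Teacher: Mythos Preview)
Your overall strategy mirrors the paper's proof, and your handling of the base cases $r=1$ and $r=2$ is fine (indeed, your route for $r=2$ via $(yzy^{-1})^2=x^{-1}zx^{-1}z$ is slightly slicker than the paper's, which instead rewrites $yz^2y^{-1}=yzxy^{-1}z$ before collapsing).

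However, there is a genuine gap in the reduction step. You claim that \eqref{eq:1f7a51}, i.e.\ $y^2z^ry^{-2}=z^r$, ``more precisely gives $yz^{3k}y^{-1}=z^{3k}$.'' It does not: the relation $y^2z^ry^{-2}=z^r$ only says that conjugation by $y$ is an involution on each $z^r$, not that it fixes $z^{3k}$. In the $x$-case the paper does \emph{not} deduce \eqref{eq:d177a2} ($xz^3x^{-1}=z^3$) from the involution property \eqref{eq:4eeb11}; it proves it by a separate direct computation. You need the same here: a direct verification that $yz^3y^{-1}=z^3$ (this is equation \eqref{eq:e318ab} in the paper, obtained from \eqref{eq:f7d88f} and the second relation in \eqref{eq:ed1372}). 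Only then does iteration give $yz^{3k}y^{-1}=z^{3k}$, and your factorisation $yz^ry^{-1}=yz^ly^{-1}\cdot z^{3k}$ goes through. Your ``base case $r\equiv 0$ is immediate'' covers only the trivial $r=0$, which is not what the reduction actually needs.
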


\begin{proof}
Using the fourth equation in
\eqref{eq:35c845} and \eqref{eq:b4f557} we have
\begin{equation}\label{eq:d001e7}
yzy^{-1}=yxy^{-1}z=x^{-1}z.
\end{equation}

Using the fourth equation in \eqref{eq:35c845}, the first one in
\eqref{eq:ed1372}, \eqref{eq:d001e7} and the third
equation in \eqref{eq:d71836} we get
\begin{equation}\label{eq:f7d88f}
yz^2y^{-1}=yzxy^{-1}z=yyzy^{-1}z=yx^{-1}z^2=xyz^2.
\end{equation}

From \eqref{eq:f7d88f} and the second equation in
\eqref{eq:ed1372} we have
\begin{equation}\label{eq:e318ab}
yz^3y^{-1}=yzz^2y^{-1}=yzx^{-1}z^2=yy^{-1}z^3=z^3.
\end{equation}

Now, let $r=3k+l$, with $l=0,1,2$, then by \eqref{eq:d001e7},
\eqref{eq:f7d88f} and \eqref{eq:e318ab} we have
\begin{equation*}
yz^ry^{-1}=yz^ly^{-1}yz^{3k}y^{-1}=yz^ry^{-1}=yz^ly^{-1}z^3k=\begin{cases}
z^r & l=0,\\
x^{-1}z^r & l=1,\\
xyz^r & l=2.
\end{cases}\qedhere
\end{equation*}
\end{proof}

Let $x^py^qz^r$ be an
arbitrary element of $P'_{8\cdot 3^k}$. From \eqref{eq:b4f557},
\eqref{eq:4eeb11}, \eqref{eq:1f7a51} and
\eqref{eq:81d8c1} we have that conjugating by $x^2$, $y^2$ or
$z^3$ fix the element, thus, to get its conjugacy class, it is enough
to conjugate by $x$, $y$, $z$ and $z^2$ in all the possible
combinations.

From \eqref{eq:b4f557}, \eqref{eq:3f5e6e} and
\eqref{eq:5a5486} we have that conjugating an element of the
group $P'_{8\cdot 3^k}$ of the form $x^py^qz^r$ by either $x$,
$y$ or $z$, the exponent $r$ of $z^r$ does not change, and that
conjugation of $z^r$ by $x$ or $y$ depends of the residue class of
$r$ modulo $3$. We see the group $P'_{8\cdot 3^k}$ as the disjoint
union of the following $24$ sets, each one with $3^{k-1}$ elements
taking $l=0,\dots,3^{k-1}-1$.

\begin{equation*}
\setlength{\extrarowheight}{5pt}
\begin{array}{|c|c|c|}\hline
xz^{3l} & xz^{3l+1} & xz^{3l+2} \\\hline
x^2z^{3l} & x^2z^{3l+1} & x^2z^{3l+2} \\\hline
x^3z^{3l} & x^3z^{3l+1} & x^3z^{3l+2} \\\hline
yz^{3l} & yz^{3l+1} & yz^{3l+2} \\\hline
xyz^{3l} & xyz^{3l+1} & xyz^{3l+2} \\\hline
x^2yz^{3l} & x^2yz^{3l+1} & x^2yz^{3l+2} \\\hline
x^3yz^{3l} & x^3yz^{3l+1} & x^3yz^{3l+2} \\\hline
\end{array}
\end{equation*}

As we mentioned above, to get the conjugacy classes, we need to
conjugate each element of the table by $x$, $y$, $z$ and $z^2$
in all the possible combinations, and we will get elements in the same
column with the same value of $l$. We get the following
$7\cdot 3^{k-1}$ conjugacy classes:
\begin{align*}
\mathsf{1}_l&=\{z^{3l}\},  & l=0,\dots,3^{k-1}-1,\\
\mathsf{1}_{l}^{\mathsf{+}}&=\{x^2z^{3l}\},  & l=0,\dots,3^{k-1}-1,\\
\mathsf{4}_l^{\mathsf{a}}&=\{z^{3l+1}, x^3z^{3l+1},x^2yz^{3l+1},x^3yz^{3l+1}\},  & l=0,\dots,3^{k-1}-1,\\
\mathsf{4}_l^{\mathsf{b}}&=\{z^{3l+2},xz^{3l+2},yz^{3l+2},xyz^{3l+2}\}  & l=0,\dots,3^{k-1}-1,\\
\mathsf{4}_l^{\mathsf{c}}&=\{xz^{3l+1},x^2z^{3l+1},yz^{3l+1},xyz^{3l+1}\},  & l=0,\dots,3^{k-1}-1,\\
\mathsf{4}_l^{\mathsf{d}}&=\{x^2z^{3l+2},x^3z^{3l+2},x^2yz^{3l+2},x^3yz^{3l+2}\},  & l=0,\dots,3^{k-1}-1,\\
\mathsf{6}_l&=\{xz^{3l},yz^{3l},x^3z^{3l},xyz^{3l},x^2yz^{3l},x^3yz^{3l}\},  & l=0,\dots,3^{k-1}-1.
\end{align*}
They are $2\cdot 3^{k-1}$ classes with one element, $4\cdot 3^{k-1}$ classes with $4$ elements and $3^{k-1}$ classes with $6$ elements.
They are all since $2\cdot 3^{k-1}+4\cdot 4\cdot 3^{k-1}+6\cdot 3^{k-1}=24\cdot 3^{k-1}=8\cdot 3^k$ is the order of the group.

\subsubsection{Irreducible representations}\label{irreducible-representations.P}

The group $P'_{8\cdot 3^k}$ has $7\cdot 3^{k-1}$ irreducible representations

\begin{remark}
For $k=1$ this agrees with the fact that $P'_{24}\cong\mathsf{BT}$.
\end{remark}

It has $3^k$ one-dimensional representations, denoted by $\alpha_j$, $0\leq j\leq 3^k-1$, they correspond to the irreducible representations
of its abelianization, by Lemma~\ref{lem:Ab.P}
$\mathrm{Ab}(P'_{8\cdot 3^k})=\mathbf{C}_{3^k}$ and they are given by
\begin{equation*}
\alpha_j(z)=\zeta_{3^k}^{j},\quad \alpha_j(x)=\alpha_j(y)=1.
\end{equation*}
It also has $3^k$ two-dimensional representations, denoted by $\varrho_{j}$ with $j=0,\dots,3^{k}-1$, given by
\begin{equation}\label{eq:2drPp83k}
\begin{aligned}
 \varrho_{j}(x)&=\left(\begin{smallmatrix}
                       0 & \zeta_3^2\\
                       -\zeta_3 & 0
                      \end{smallmatrix}\right), & \varrho_{j}(y)&=\left(\begin{smallmatrix}
                                                                \zeta_3^2 & 1\\
                                                                \zeta_3^2 & -\zeta_3^2
                                                               \end{smallmatrix}\right), & \varrho_{j}(z)&=\zeta_{3^k}^j\left(\begin{smallmatrix}
                                                                                                          0 & \zeta_3\\
                                                                                                          -\zeta_3^2 & -1
                                                                                                         \end{smallmatrix}\right),\\
\end{aligned}
\end{equation}
where $\varrho_{1}$ is the natural representation.
It has $3^{k-1}$ three-dimensional irreducible representations $\varsigma_s$, $s=0,\dots,3^{k-1}-1$ given by
\begin{equation}\label{eq:3drPp83k}
\begin{aligned}
\varsigma_s(x)&=\left(\begin{smallmatrix}
                     -1 & -1 & -1\\
                     0 & 0 & 1\\
                     0 & 1 & 0
                    \end{smallmatrix}\right), & \varsigma_s(y)&=\left(\begin{smallmatrix}
                                                                     0 & 0 & 1\\
                                                                     -1 & -1 & -1\\
                                                                     1 & 0 & 0
                                                                    \end{smallmatrix}\right), & \varsigma_s(z)&=\zeta_{3^k}^s\left(\begin{smallmatrix}
                                                                                                                     -1 & -1 & -1\\
                                                                                                                     0 & 1 & 0\\
                                                                                                                     1 & 0 & 0
                                                                                                                    \end{smallmatrix}\right).
\end{aligned}
\end{equation}

It is straightforward to check that $\varrho_{j}$ and $\varsigma_s$ satisfy the relations in \eqref{eq:271bf2} so they are indeed representations of $P'_{8\cdot 3^k}$.

Let $H$ be a set of elements of $P'_{8\cdot 3^k}$ containing exactly
one element from each conjugacy class, for instance
\begin{equation*}
H=\{z^{3l},x^2z^{3l},z^{3l+1},z^{3l+2},xz^{3l+1},x^2z^{3l+2},xz^{3l}\}, \ l=0,\dots,3^{k-1}-1.
\end{equation*}

\subsubsection*{The representation $\varrho_{j}$ is irreducible} Evaluating $\varrho_{j}$ on the elements of $H$ we get
\begin{align*}
\varrho_{j}(z^{3l})&=\left(\begin{smallmatrix}
\zeta_{3^{k}}^{3lj} & 0\\
0 & \zeta_{3^{k}}^{3lj}
\end{smallmatrix}\right), &
\varrho_{j}(x^2z^{3l})&=\left(\begin{smallmatrix}
-\zeta_{3^{k}}^{3lj} & 0\\
0 & -\zeta_{3^{k}}^{3lj}
\end{smallmatrix}\right),\\
\varrho_{j}(z^{3l+1})&=\left(\begin{smallmatrix}
0 & \zeta_3\zeta_{3^k}^{(3l+1)j}\\
-\zeta_3^2\zeta_{3^k}^{(3l+1)j} & -\zeta_{3^k}^{(3l+1)j}
\end{smallmatrix}\right), &
\varrho_{j}(z^{3l+2})&=\left(\begin{smallmatrix}
-\zeta_{3^k}^{(3l+2)j} & -\zeta_3\zeta_{3^k}^{(3l+2)j}\\
\zeta_3^2\zeta_{3^k}^{(3l+2)j} & 0
\end{smallmatrix}\right),\\
\varrho_{j}(xz^{3l+1})&=\left(\begin{smallmatrix}
-\zeta_3\zeta_{3^k}^{(3l+1)j} & -\zeta_3^2\zeta_{3^k}^{(3l+1)j}\\
0 & -\zeta_3^2\zeta_{3^k}^{(3l+1)j}
\end{smallmatrix}\right),&
\varrho_{j}(x^2z^{3l+2})&=\left(\begin{smallmatrix}
\zeta_{3^k}^{(3l+2)j} & \zeta_3\zeta_{3^k}^{(3l+2)j}\\
-\zeta_3^2\zeta_{3^k}^{(3l+2)j} & 0
\end{smallmatrix}\right),\\
\varrho_{j}(xz^{3l})&=\left(\begin{smallmatrix}
0 & \zeta_3^2\zeta_{3^{k}}^{3lj}\\
-\zeta_3\zeta_{3^{k}}^{3lj} & 0
\end{smallmatrix}\right). &\\
\end{align*}

Let $\psi_{j}$ be the character of the representation $\varrho_{j}$. The corresponding values of the elements of $H$
under the character are
\begin{align*}
\psi_{j}(z^{3l})&=2\zeta_{3^{k}}^{3lj}, &
\psi_{j}(x^2z^{3l})&=-2\zeta_{3^{k}}^{3lj}, &
\psi_{j}(z^{3l+1})&=-\zeta_{3^k}^{(3l+1)j},\\
\psi_{j}(z^{3l+2})&=-\zeta_{3^k}^{(3l+2)j}, &
\psi_{j}(xz^{3l+1})&=\zeta_{3^k}^{(3l+1)j}, &
\psi_{j}(x^2z^{3l+2})&=\zeta_{3^k}^{(3l+2)j}\\
\psi_{j}(xz^{3l})&=0.
\end{align*}
Note that
$\psi_{j}(xz^{3l+1})=\zeta_{3^k}^{(3l+1)j}(-\zeta_3-\zeta_3^2)$, but
since $1+\zeta_3+\zeta_3^2=0$ we have $-\zeta_3-\zeta_3^2=1$. This is used in other cases.

Hence the inner product of the character $\psi_{j}$ with itself is
\begin{align*}
\langle \psi_{j},\psi_{j}\rangle&=\frac{1}{8\cdot 3^k}\left[\sum_{l=0}^{3^{k-1}-1}2\zeta_{3^{k-1}}^{lj}2\zeta_{3^{k-1}}^{-lj}
+\sum_{l=0}^{3^{k-1}-1}(-2\zeta_{3^{k-1}}^{lj})(-2\zeta_{3^{k-1}}^{-lj})\right.\\
&\ +4\sum_{l=0}^{3^{k-1}-1}(-\zeta_{3^k}^{(3l+1)j})(-\zeta_{3^k}^{-(3l+1)j})
+4\sum_{l=0}^{3^{k-1}-1}(-\zeta_{3^k}^{(3l+2)j})(-\zeta_{3^k}^{-(3l+2)j})\\
&\ \left.+4\sum_{l=0}^{3^{k-1}-1}(\zeta_{3^k}^{(3l+1)j})(\zeta_{3^k}^{-(3l+1)j})
+4\sum_{l=0}^{3^{k-1}-1}(\zeta_{3^k}^{(3l+2)j})(\zeta_{3^k}^{-(3l+2)j})
+6\sum_{l=0}^{3^{k-1}-1}0\right]\\
&=\frac{1}{8\cdot 3^k}\left[6\cdot4\cdot 3^{k-1}\right]=\frac{1}{8\cdot 3^k}\left[8\cdot 3^k\right]=1.
\end{align*}
This proves that the representations $\varrho_{j}$ are
irreducible.

\subsubsection*{The representation $\varsigma_s$ is irreducible}
Evaluating $\varsigma_{s}$ on the elements of $H$ we get
\begin{align*}
\varsigma_s(z^{3l})&=\zeta_{3^{k-1}}^{ls}\left(\begin{smallmatrix}
1 & 0 & 0\\
0 & 1 & 0\\
0 & 0 & 1
\end{smallmatrix}\right), &
\varsigma_s(x^2z^{3l})&=\zeta_{3^{k-1}}^{ls}\left(\begin{smallmatrix}
1 & 0 & 0\\
0 & 1 & 0\\
0 & 0 & 1
\end{smallmatrix}\right)\\
\varsigma_s(z^{3l+1})&=\zeta_{3^k}^{(3l+1)s}\left(\begin{smallmatrix}
-1 & -1 & -1\\
0 & 1 & 0\\
1 & 0 & 0
\end{smallmatrix}\right), &
\varsigma_s(z^{3l+2})&=\zeta_{3^k}^{(3l+2)s}\left(\begin{smallmatrix}
0 & 0 & 1\\
0 & 1 & 0\\
-1 & -1 & -1
\end{smallmatrix}\right)\\
\varsigma_s(xz^{3l+1})&=\zeta_{3^k}^{(3l+1)s}\left(\begin{smallmatrix}
0 & 0 & 1\\
1  & 0 & 0\\
0 & 1 & 0
\end{smallmatrix}\right), &
\varsigma_s(x^2z^{3l+2})&=\zeta_{3^k}^{(3l+2)s}\left(\begin{smallmatrix}
0 & 0 & 1\\
0 & 1 & 0\\
-1 & -1 & -1
\end{smallmatrix}\right)\\
\varsigma_s(xz^{3l})&=\zeta_{3^{k-1}}^{ls}\left(\begin{smallmatrix}
-1 & -1 & -1\\
0 & 0 & 1\\
0 & 1 & 0
\end{smallmatrix}\right)\\
\end{align*}
Let $\varphi_s$ be the character of the representation $\varsigma_{s}$. The corresponding values of the elements of $H$ under the character are
\begin{equation}\label{eq:ch.val.vss}
\begin{aligned}
\varphi_s(z^{3l})&=3\zeta_{3^{k-1}}^{ls}, &
\varphi_s(x^2z^{3l})&=3\zeta_{3^{k-1}}^{ls}, &
\varphi_s(z^{3l+1})&=0,\\
\varphi_s(z^{3l+2})&=0, &
\varphi_s(xz^{3l+1})&=0, &
\varphi_s(x^2z^{3l+2})&=0,\\
\varphi_s(xz^{3l})&=-\zeta_{3^{k-1}}^{ls}.\\
\end{aligned}
\end{equation}
Hence the inner product of the character $\varphi_{s}$ with itself is
\begin{align*}
\langle \varphi_{s},\varphi_{s}\rangle&=\frac{1}{8\cdot 3^k}\left[\sum_{l=0}^{3^{k-1}-1}(3\zeta_{3^{k-1}}^{ls})(3\zeta_{3^{k-1}}^{-ls})
+\sum_{l=0}^{3^{k-1}-1}(3\zeta_{3^{k-1}}^{ls})(3\zeta_{3^{k-1}}^{-ls})\right.\\
&\ \left.+6\sum_{l=0}^{3^{k-1}-1}(-\zeta_{3^{k-1}}^{ls})(-\zeta_{3^{k-1}}^{-ls})\right]\\
&=\frac{1}{8\cdot 3^k}\left[9\cdot 3^{k-1}+9\cdot 3^{k-1}+6\cdot 3^{k-1}\right]=\frac{1}{8\cdot 3^k}\left[8\cdot 3^k\right]=1.
\end{align*}
This proves that the representations $\varsigma_{s}$ are irreducible.

Naturally we have that $s=0,\dots,3^{k}-1$, but taking all this values of $s$ gives $3^{k}$ three-dimensional irreducible representions, which
together with the $2\cdot3^{k}$ irreducible representations of dimensions one and two, are
much more than conjugacy classes, hence, some of them should be isomorphic.

\begin{proposition}\label{prop:3-dim.iso}
We have the following isomorphism of representations of $P'_{8\cdot 3^k}$
\begin{equation}\label{eq:3-dim.iso}
\varsigma_{3^{k-1}+s}\cong\varsigma_{s}.
\end{equation}
\end{proposition}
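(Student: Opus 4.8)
The plan is to follow the same character-theoretic template used in Propositions~\ref{prop:t=0}, \ref{prop:t=2r+1} and \ref{prop:2-dim.iso}: since a representation is determined up to isomorphism by its character \cite[Theorem~14.21]{James-Liebeck:RCG}, it suffices to check that $\varsigma_{3^{k-1}+s}$ and $\varsigma_s$ have the same character on a set of representatives of the conjugacy classes. I would reuse the set $H$ of representatives introduced above and the character values $\varphi_s$ already recorded in \eqref{eq:ch.val.vss}, simply substituting $3^{k-1}+s$ for $s$.

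The key arithmetic fact I would isolate first is that $\zeta_{3^{k-1}}^{3^{k-1}}=1$, equivalently $\zeta_{3^k}^{3\cdot 3^{k-1}}=\zeta_{3^k}^{3^k}=1$. Replacing $s$ by $3^{k-1}+s$ in the defining formula \eqref{eq:3drPp83k} multiplies $\varsigma_s(z)$ by the scalar $\zeta_{3^k}^{3^{k-1}}$, so on each class an extra phase appears through the $z$-part. The decisive observation is that every representative in $H$ on which $\varphi_s$ is nonzero, namely $z^{3l}$, $x^2z^{3l}$ and $xz^{3l}$, has an exponent of $z$ divisible by $3$; hence the extra phase is $\zeta_{3^{k-1}}^{l\,3^{k-1}}=1$ and the nonzero values are left unchanged.

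Concretely, for the classes $\mathsf{1}_l$, $\mathsf{1}_l^{\mathsf{+}}$ and $\mathsf{6}_l$ I would compute, for instance,
\begin{equation*}
\varphi_{3^{k-1}+s}(z^{3l})=3\zeta_{3^{k-1}}^{l(3^{k-1}+s)}=3\zeta_{3^{k-1}}^{l\,3^{k-1}}\zeta_{3^{k-1}}^{ls}=3\zeta_{3^{k-1}}^{ls}=\varphi_s(z^{3l}),
\end{equation*}
and identically for $x^2z^{3l}$ and $xz^{3l}$. On the four remaining families $\mathsf{4}_l^{\mathsf{a}},\dots,\mathsf{4}_l^{\mathsf{d}}$ the character $\varphi_s$ vanishes for every $s$, since the relevant matrices in \eqref{eq:3drPp83k} have trace $0$ and scaling by $\zeta_{3^k}^{j}$ preserves this; hence both characters are $0$ there. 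It follows that $\varphi_{3^{k-1}+s}=\varphi_s$ on all of $H$, and the isomorphism \eqref{eq:3-dim.iso} is immediate.

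I expect no genuine obstacle here: the argument is a direct character comparison of the kind already carried out for the family $D_{2^k(2r+1)}$. The only subtlety worth flagging explicitly is that the nonzero character values occur \emph{precisely} on the classes whose $z$-exponent is a multiple of $3$, which is exactly what forces the spurious phase $\zeta_{3^{k-1}}^{l\,3^{k-1}}$ to collapse to $1$.
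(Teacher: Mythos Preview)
Your proposal is correct and is essentially the paper's own argument: substitute $3^{k-1}+s$ for $s$ in the character values \eqref{eq:ch.val.vss}, use $\zeta_{3^{k-1}}^{l\cdot 3^{k-1}}=1$ on the classes $\mathsf{1}_l$, $\mathsf{1}_l^{\mathsf{+}}$, $\mathsf{6}_l$ where the character is nonzero, and note that the remaining classes carry value $0$ regardless of the index. Your write-up is in fact a bit more explicit than the paper's about \emph{why} the phase disappears, but the route is identical.
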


\begin{proof}
From \eqref{eq:ch.val.vss} we have
\begin{align*}
\varphi_{3^{k-1}+s}(z^{3l})&=3\zeta_{3^{k-1}}^{ls}, &
\varphi_{3^{k-1}+s}(x^2z^{3l})&=3\zeta_{3^{k-1}}^{ls}, &
\varphi_{3^{k-1}+s}(z^{3l+1})&=0,\\
\varphi_{3^{k-1}+s}(z^{3l+2})&=0, &
\varphi_{3^{k-1}+s}(xz^{3l+1})&=0, &
\varphi_{3^{k-1}+s}(x^2z^{3l+2})&=0,\\
\varphi_{3^{k-1}+s}(xz^{3l})&=-\zeta_{3^{k-1}}^{ls},\\
\end{align*}
which is the character $\varphi_s$ of the irreducible representation $\varsigma_s$.
\end{proof}

This is the complete list of irreducible representations of
$P'_{8\cdot 3^k}$ since the sum of the squares of their ranks is
\begin{equation*}
 3^k+4(3^k)+9(3^{k-1})=3^k+4(3^k)+3(3^k)=8(3^k),
\end{equation*}
which is the order of $P'_{8\cdot 3^k}$. By \eqref{eq:irr.dim.ord} there can not exist any other irreducible representation.
The character table of $P'_{8\cdot 3^k}$ is given in Table~\ref{tab:CT.P}.

\begin{table}[H]
\begin{equation*}
\setlength{\extrarowheight}{5pt}
\begin{array}{|c|c|c|c|c|c|c|c|}\hline
\text{\small Class}& \mathsf{1}_l & \mathsf{1}_{l}^{\mathsf{+}} & \mathsf{4}_l^{\mathsf{a}} & \mathsf{4}_l^{\mathsf{b}} & \mathsf{4}_l^{\mathsf{c}} & \mathsf{4}_l^{\mathsf{d}} & \mathsf{6}_l\\\hline
\chi_j & \zeta_{3^k}^{3lj} &\zeta_{3^k}^{3lj} & \zeta_{3^k}^{(3l+1)j} & \zeta_{3^k}^{(3l+2)j} &\zeta_{3^k}^{(3l+1)j} & \zeta_{3^k}^{(3l+2)j} & \zeta_{3^k}^{3lj}\\\hline
\psi_{j} & 2\zeta_{3^k}^{3lj} & -2\zeta_{3^k}^{3lj} & -\zeta_{3^k}^{(3l+1)j} & -\zeta_{3^k}^{(3l+2)j} & \zeta_{3^k}^{(3l+1)j} & \zeta_{3^k}^{(3l+2)j} & 0\\\hline
\varphi_{s} & 3\zeta_{3^k}^{3ls} & 3\zeta_{3^k}^{3ls} & 0 &  0 &  0 &  0 & -\zeta_{3^k}^{3ls}\\\hline
\multicolumn{8}{l}{\text{Note: $\zeta_{3^k}=e^{\frac{2\pi i}{3^k}}$, $0\leq j\leq 3^{k}-1$, $0\leq s\leq 3^{k-1}-1$, $0\leq l\leq 3^{k-1}-1$.}}
\end{array}
\end{equation*}
\caption{Character table of $P'_{8\cdot 3^k}$.}\label{tab:CT.P}
\end{table}

\section{McKay quivers of small finite subgroups of $GL(2,\mathbb{C})$}\label{sec:MQ.all}

By Remark~\ref{rem:GxC} the small finite subgroups of $GL(2,\mathbb{C})$ are the groups $\mathrm{B}\mathbf{D}_{2q}$, $\mathrm{B}\mathbf{T}$, $\mathrm{B}\mathbf{O}$, $\mathrm{B}\mathbf{I}$, $D_{2^{k+1}(2r+1)}$ and $P'_{8\cdot 3^k}$; the direct products $\Gamma\times\mathbf{C}_m$, where $\Gamma$ is one of the groups $\mathrm{B}\mathbf{D}_{2q}$, $\mathrm{B}\mathbf{T}$, $\mathrm{B}\mathbf{O}$, $\mathrm{B}\mathbf{I}$, $D_{2^{k+1}(2r+1)}$ or $P'_{8\cdot 3^k}$ and $\mathbf{C}_m$ is a cyclic group of order $m$, with $m$ relative prime to the order of $\Gamma$; and the cyclic groups $\mathbb{C}_{n,q}$.
In Table~\ref{tab:MK.graph} there are the McKay graphs of the finite subgroups of $SU(2)$; in the next subsections we compute the McKay quivers of the groups $\mathbb{C}_{n,q}$, $D_{2^k(2r+1)}$ and $P'_{8\cdot 3^k}$ using character theory (see Subsection~\ref{ssec:MQ.CT}).
Then, we compute the McKay quivers of the groups of the form $\Gamma\times\mathbf{C}_m$ using Theorem~\ref{thm:MQ.prod}. Since we are considering the McKay quivers with respect to the natural representation which is faithful, by Proposition~\ref{prop:connect} they are connected.
We draw examples of the different types of these McKay quivers, this shows their many symmetries.

\subsection{McKay quiver of the groups $\mathbb{C}_{n,q}$}\label{ssec:MQ.Cnq}

Recall from Subsection~\ref{sssec:Cn} that the group $\mathbb{C}_{n,q}$, with $\gcd(n,q)=1$, has $n$ irreducible one-dimensional representations $\beta_0,\dots,\beta_{n-1}$ which correspond to each of the vertices of the McKay Quiver $Q_{\rho_{\mathrm{Nat}}}(\mathbb{C}_{n,q})$. From Subsection~\ref{ssec:Cnq} the natural representation $\rho_{\mathrm{Nat}}\colon\mathbb{C}_{n,q}\to U(2)$ is given by the direct sum $\rho_{\mathrm{Nat}}=\beta_1\oplus\beta_{q}$. Let $\chi_j$ be the character of the representation $\beta_j$, from Table~\ref{tab:CT.Cq} and \eqref{eq:aij}  we have
\begin{equation*}
a_{ij}=\inpr{\chi_{\mathrm{Nat}}\chi_i}{\chi_j}=\frac{1}{n}\sum_{l=0}^{n-1}(\zeta_n^l+\zeta_n^{lq})\zeta_n^{li}\zeta_n^{-lj}=\frac{1}{n}\sum_{l=0}^{n-1}\zeta_n^{l(i-j+1)}+\frac{1}{n}\sum_{l=0}^{n-1}\zeta_n^{l(i-j+q)}.
\end{equation*}
Recall that if $n$ is not a divisor of $t$ we have $\sum_{l=0}^{n-1}\zeta_n^{tl}=0$, and if $t\equiv 0\mod n$, then $\sum_{l=0}^{n-1}\zeta_n^{tl}=n$. Hence
\begin{equation*}
a_{ij}=\begin{cases}
        0 & \text{if $j\not\equiv i+1\mod n$ and $j\not\equiv i+q\mod n$,}\\
        1 & \text{if $j\equiv i+1\mod n$ and $j\not\equiv i+q\mod n$,}\\
        1 & \text{if $j\not\equiv i+1\mod n$ and $j\equiv i+q\mod n$.}\\
       % 2 & \text{if $q=1$ and $j\equiv i+1\mod n$.}
       \end{cases}
\end{equation*}
Therefore, we have two arrows going out from the vertex $\beta_i$:
\begin{equation}\label{eq:MQ.Cyclic}
\begin{tikzcd}
   & \beta_{i+1}\\
\beta_i\arrow[rd] \arrow[ru] &  \\
& \beta_{i+q}
\end{tikzcd}\qquad \text{with addition modulo $n$.}
\end{equation}

Thus, if we put the vertices of the McKay quiver $Q_{\rho_{\mathrm{Nat}}}(\mathbb{C}_{n,q})$ in the vertices of a regular $n$-gon numerated counterclockwise and we draw \textcolor{red}{red} arrows given by the first congruence and \textcolor{blue}{blue} arrows given by the second congruence, we get an $n$-gone with a cycle of \textcolor{red}{red arrows} on its edges pointing counterclockwise and \textcolor{blue}{blue arrows} on the diagonals going from vertex $\beta_i$ to vertex $\beta_{i+q}$ forming a cycle passing through all the vertices. There are two special cases: when $q=1$ we have an $n$-gone with red arrows on its edges with multiplicity $2$ pointing counterclockwise without blue diagonals; when $q=n-1$ we have that $j\equiv i+n-1\mod n$ is equivalent to $j\equiv i-1\mod n$, thus we get on each edge of the $n$-gone two arrows pointing to opposite directions, hence by our convention we get an unoriented $n$-gon  which is the extended Dynkin diagram $\tilde{A}_{n-1}$ and in this case $\mathbb{C}_{n,n-1}\leq SU(2)$ (see Table~\ref{tab:MK.graph}). In Figure~\ref{fig:MQCnq} we show the McKay quivers $\mathbb{C}_{8,q}$ for $q=1,3,5,7$.

\begin{figure}[H]
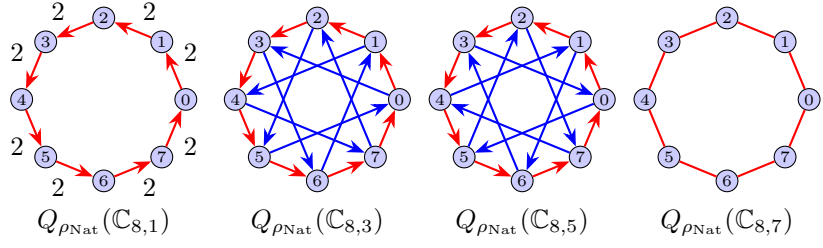

\begin{center}
\MQCnq{8}{1}\MQCnq{8}{3}\MQCnq{8}{5}\MQCnq{8}{7}
\end{center}
\caption{McKay quivers of the groups $\mathbb{C}_{8,q}$.}\label{fig:MQCnq}
\end{figure}

\subsection{McKay quiver of the groups $D_{2^k(2r+1)}$}

From Subsection~\ref{irreducible-representations} the group $D_{2^k(2r+1)}$ has $2^{k-1}(r+2)$ irreducible representations: $2^k$ one-dimensional representations
$\alpha_j$ with $j=0,\dots,2^k-1$ and $2^{k-1}r$ two-dimensional representations $\varrho_{t,s}$ with $t=1,\dots,2r$ and $s=0,\dots,2^{k-2}-1$ which correspond to each of the vertices of the McKay Quiver $Q_{\rho_{\mathrm{Nat}}}(G)$. The natural representation $\rho_{\mathrm{Nat}}\colon D_{2^k(2r+1)}\to U(2)$ is $\rho_{\mathrm{Nat}}=\varrho_{1,1}$, whose character is denoted by $\chi_{1,1}$.

\subsubsection{Arrows going out from one-dimensional representations $\alpha_j$}

Consider a one-dimensional representation $\alpha_i$, recall that we are denoting its character by $\chi_i$. From Table~\ref{tab:CT.D} the character $\chi_{1,1}\cdot\chi_i$ of the representation
$\rho_{\mathrm{Nat}}\otimes\alpha_i=\varrho_{1,1}\otimes\alpha_i$ is given in Table~\ref{tab:Char.Nat.Aj}.

\begin{table}[H]
\begin{equation*}
\setlength{\extrarowheight}{5pt}
\begin{array}{|c|c|c|c|}\hline
\text{\small Class}& \mathsf{1}_l & \mathsf{2}_{l,q} & \mathsf{(2r+1)}_l\\\hline
\chi_{1,1}\cdot\chi_i & (-1)^{l}2\zeta_{2^k}^{2l(i+1)} & (-1)^{l}\zeta_{2^k}^{2l(i+1)}(\zeta_{2r+1}^{q}+\zeta_{2r+1}^{-q}) & 0\\\hline
\multicolumn{4}{l}{\text{Note: $\zeta_{2^k}=e^{\frac{2\pi i}{2^k}}$, $0\leq i\leq 2^k-1$, $0\leq j\leq 2^{k-1}-1$, $1\leq q\leq r$.}}\\
\end{array}
\end{equation*}
\caption{Character of $\rho_{\mathrm{Nat}}\otimes\alpha_i=\varrho_{1,1}\otimes\alpha_i$.}\label{tab:Char.Nat.Aj}
\end{table}

Comparing Table~\ref{tab:Char.Nat.Aj} with Table~\ref{tab:CT.D} we can see that the character $\chi_{1,1}\cdot\chi_i$ is the character $\chi_{1,i+1}$ of the two-dimensional irreducible representation $\varrho_{1,i+1}$. Hence
\begin{equation}\label{eq:1dim.Case}
\rho_{\mathrm{Nat}}\otimes\alpha_i=\varrho_{1,1}\otimes\alpha_i\cong\varrho_{1,i+1}.
\end{equation}

\subsubsection{Arrows going out from two-dimensional representations $\varrho_{t,s}$}

Consider a two-dimensional representation $\varrho_{t,s}$ and its character $\chi_{t,s}$. From Table~\ref{tab:CT.D} the character $\chi_{1,1}\cdot\chi_{t,s}$ of the representation $\rho_{\mathrm{Nat}}\otimes\varrho_{t,s}=\varrho_{1,1}\otimes\varrho_{t,s}$ is given in Table~\ref{tab:Char.Nat.varrhots}.

\begin{table}[H]
\begin{equation*}
\setlength{\extrarowheight}{5pt}
\begin{array}{|c|c|c|c|}\hline
\text{\small Class}& \mathsf{1}_l & \mathsf{2}_{l,q} & \mathsf{(2r+1)}_l\\\hline
\chi_{1,1}\cdot\chi_{t,s} &\scriptstyle (-1)^{l(t+1)}4\zeta_{2^k}^{2l(s+1)} &\scriptstyle (-1)^{l(t+1)}\zeta_{2^k}^{2l(s+1)}(\zeta_{2r+1}^{q}+\zeta_{2r+1}^{-q})(\zeta_{2r+1}^{tq}+\zeta_{2r+1}^{-tq}) & 0\\\hline
\multicolumn{4}{l}{\text{Note: $\zeta_{2^k}=e^{\frac{2\pi i}{2^k}}$, $1\leq t\leq 2r$, $0\leq s\leq 2^{k-2}-1$, $0\leq j\leq 2^{k-1}-1$, $1\leq q\leq r$.}}\\
\end{array}
\end{equation*}
\caption{Character of $\rho_{\mathrm{Nat}}\otimes\alpha_i=\varrho_{1,1}\otimes\alpha_i$.}\label{tab:Char.Nat.varrhots}
\end{table}
Writing
\begin{equation*}
(-1)^{l(t+1)}4\zeta_{2^k}^{2l(s+1)}=(-1)^{l(t+1)}2\zeta_{2^k}^{2l(s+1)}+(-1)^{l(t+1)}2\zeta_{2^k}^{2l(s+1)},
\end{equation*}
and
\begin{multline*}
(-1)^{l(t+1)}\zeta_{2^k}^{2l(s+1)}(\zeta_{2r+1}^{q}+\zeta_{2r+1}^{-q})(\zeta_{2r+1}^{tq}+\zeta_{2r+1}^{-tq})\\
=(-1)^{l(t-1)}\zeta_{2^k}^{2l(s+1)}(\zeta_{2r+1}^{(t-1)q}+\zeta_{2r+1}^{-(t-1)q})+(-1)^{l(t+1)}\zeta_{2^k}^{2l(s+1)}(\zeta_{2r+1}^{(t+1)q}+\zeta_{2r+1}^{-(t+1)q}),
\end{multline*}
and comparing Table~\ref{tab:Char.Nat.varrhots} with Table~\ref{tab:CT.D} we can see that there are three cases.

\paragraph{\textbf{Case A:}}
If $t\neq 1,2r$ the character $\chi_{1,1}\cdot\chi_{t,s}$ is the character $\chi_{t-1,s+1}+\chi_{t+1,s+1}$ of the four-dimensional irreducible representation $\varrho_{t-1,s+1}\oplus\varrho_{t+1,s+1}$. Hence
\begin{equation}\label{eq:2dim.CaseA}
\rho_{\mathrm{Nat}}\otimes\varrho_{t,s}=\varrho_{1,1}\otimes\varrho_{t,s}\cong\varrho_{t-1,s+1}\oplus\varrho_{t+1,s+1}.
\end{equation}
\paragraph{\textbf{Case B:}}
If $t=1$, by Proposition~\ref{prop:t=0} we have that the  character $\chi_{1,1}\cdot\chi_{1,s}$ is the character $\chi_{s+1}+\chi_{2^{k-1}+s+1}+\chi_{2,s+1}$ of the four-dimensional representation $\alpha_{s+1}\oplus\alpha_{2^{k-1}+s+1}\oplus\varrho_{2,s+1}$. Hence
\begin{equation}\label{eq:2dim.CaseB}
\rho_{\mathrm{Nat}}\otimes\varrho_{t,s}=\varrho_{1,1}\otimes\varrho_{1,s}\cong\alpha_{s+1}\oplus\alpha_{2^{k-1}+s+1}\oplus\varrho_{2,s+1}
\end{equation}
\paragraph{\textbf{Case C:}}
If $t=2r$, by Proposition~\ref{prop:t=2r+1} we have that the  character $\chi_{1,1}\cdot\chi_{2r,s}$ is the character $\chi_{2^{k-2}+s+1}+\chi_{2^{k-1}+2^{k-2}+s+1}+\chi_{2r-1,s+1}$ of the four-dimensional representions $\alpha_{2^{k-2}+s+1}\oplus\alpha_{2^{k-1}+2^{k-2}+s+1}\oplus\varrho_{2r-1,s+1}$. Hence
\begin{equation}\label{eq:2dim.CaseC}
\rho_{\mathrm{Nat}}\otimes\varrho_{t,s}=\varrho_{1,1}\otimes\varrho_{2r,s}\cong\alpha_{2^{k-2}+s+1}\oplus\alpha_{2^{k-1}+2^{k-2}+s+1}\oplus\varrho_{2r-1,s+1}.
\end{equation}

\subsubsection{The McKay quiver $Q_{\rho_{\mathrm{Nat}}}(D_{2^k(2r+1)})$}\label{sssec:MQ.Dnq}

The McKay quiver $Q_{\rho_{\mathrm{Nat}}}(D_{2^k(2r+1)})$ is given by the following arrows corresponding to the decompositions \eqref{eq:1dim.Case}, \eqref{eq:2dim.CaseA}, \eqref{eq:2dim.CaseB} and \eqref{eq:2dim.CaseC}.
%\vspace{3pt}
\begin{flalign*}
\alpha_i&\to \varrho_{1,i+1} & &\text{if $i=0,\dots,2^{k-2}-2$,}\\
\alpha_i&\to \varrho_{1,2^{k-2}+j}\cong\varrho_{2r,j} & &\text{if $i=2^{k-2}+j-1$ with $j=0,\dots,2^{k-2}-1$,}\\
\alpha_i&\to \varrho_{1,2^{k-1}+j}\cong\varrho_{1,j} & &\text{if $i=2^{k-1}+j-1$ with $j=0,\dots,2^{k-2}-1$,}\\
\alpha_i&\to \varrho_{1,3\cdot2^{k-2}+j}\cong\varrho_{2r,j} & &\text{if $i=3\cdot2^{k-2}+j-1$ with $j=0,\dots,2^{k-2}-1$,}\\
\alpha_{2^{k}-1}&\to \varrho_{1,2^{k}}\cong\varrho_{1,0}.\\[2pt]
\end{flalign*}\\
\begin{tikzcd}
   & \alpha_{s+1}\\
\varrho_{1,s}\arrow[rd] \arrow[r] \arrow[ru] & \alpha_{2^{k-1}+s+1} \\
& \varrho_{2,s+1}
\end{tikzcd}\qquad if $s=2^{k-2}-1$ we have $\varrho_{2,2^{k-2}}\cong\varrho_{2r-1,0}$.
\vspace{8pt}

If $t\neq 1$ and $t\neq 2r$

\begin{tikzcd}
   & \varrho_{t-1,s+1}\\
\varrho_{t,s}\arrow[rd] \arrow[ru] &  \\
& \varrho_{t+1,s+1}
\end{tikzcd}\qquad if $s=2^{k-2}-1$ we have
\begin{tikzcd}
 \varrho_{t-1,2^{k-2}}\cong\varrho_{2r+1-(t-1),0},\\
 \\
 \varrho_{t+1,2^{k-2}}\cong\varrho_{2r+1-(t+1),0}.\\[10pt]
\end{tikzcd}

\begin{tikzcd}
   & \alpha_{2^{k-2}+s+1}\\
\varrho_{2r,s}\arrow[rd] \arrow[r] \arrow[ru] & \alpha_{2^{k-1}+2^{k-2}+s+1} \\
& \varrho_{2r-1,s+1}
\end{tikzcd}\qquad if $s=2^{k-2}-1$ we have $\varrho_{2r-1,2^{k-2}}\cong\varrho_{2,0}$.

Figures~\ref{fig:MQ:D4.1} and \ref{fig:MQ:D4.2} show the McKay quivers $Q_{\rho_{\mathrm{Nat}}}(D_{2^4(3)})$ and $Q_{\rho_{\mathrm{Nat}}}(D_{2^4(5)})$.
We are labeling the vertices corresponding to the one-dimensional representations $\alpha_i$ with the index $i$, and the vertices corresponding to the two-dimensional representions $\varrho_{t,s}$ with the pair $(t,s)$.
\begin{figure}[H]
\begin{center}
%%% $D_{2^4\cdot 3}$
\begin{tikzpicture}[scale=1,>=Stealth,auto=left,every node/.style={circle,fill=blue!20,inner sep=1pt,font=\tiny,draw},twodim/.style={circle,fill=green!20,inner sep=0pt,font=\tiny,draw}]
%\draw[step=1cm,help lines] (-4,-4) grid (4,4);
\begin{scope}[name prefix = in-]
% Inner square with wings
\node (so) at  (-1,-1) {$15$};
\node[twodim] (o) at  (-1,0) {$(2,2)$};
\node (no) at  (-1,1) {$13$};
\node[twodim] (n) at  (0,1) {$(2,0)$};
\node (ne) at  (1,1) {$11$};
\node[twodim] (e) at  (1,0) {$(1,2)$};
\node (se) at  (1,-1) {$9$};
\node[twodim] (s) at  (0,-1) {$(1,0)$};
\node (wso) at  (-.25,-.25) {$7$};
\node (wno) at  (-.25,.25) {$5$};
\node (wne) at  (.25,.25) {$3$};
\node (wse) at  (.25,-.25) {$1$};
\draw [->, thick] (o) -- (so);
\draw [->, thick] (no) -- (o);
\draw [->, red, thick] (n) -- (no);
\draw [->, thick] (ne) -- (n);
\draw [->, thick] (e) -- (ne);
\draw [->, thick] (se) -- (e);
\draw [->, thick] (s) -- (se);
\draw [->, thick] (so) -- (s);
\draw [->, thick] (wso) -- (s);
\draw [->, thick] (o) -- (wso);
\draw [->, thick] (wno) -- (o);
\draw [->, red, thick] (n) -- (wno);
\draw [->, thick] (wne) -- (n);
\draw [->, thick] (e) -- (wne);
\draw [->, thick] (wse) -- (e);
\draw [->, thick] (s) -- (wse);
\end{scope}
\begin{scope}[name prefix = out-]
\node[twodim] (so) at  (-2,-2) {$(1,3)$};
\node (o) at  (-2,0) {$10$};
\node[twodim] (no) at  (-2,2) {$(1,1)$};
\node (n) at  (0,2) {$8$};
\node[twodim] (ne) at  (2,2) {$(2,3)$};
\node (e) at  (2,0) {$14$};
\node[twodim] (se) at  (2,-2) {$(2,1)$};
\node (s) at  (0,-2) {$12$};
\node (wn) at (0,3) {$0$};
\node (we) at (3,0) {$6$};
\node (ws) at (0,-3) {$4$};
\node (wo) at (-3,0) {$2$};
\draw [->, thick] (o) -- (so);
\draw [->, thick] (no) -- (o);
\draw [->, red, thick] (n) -- (no);
\draw [->, thick] (ne) -- (n);
\draw [->, thick] (e) -- (ne);
\draw [->, thick] (se) -- (e);
\draw [->, thick] (s) -- (se);
\draw [->, thick] (so) -- (s);
\draw [->, thick] (wo) -- (so);
\draw [->, thick] (no) -- (wo);
\draw [->, red, thick] (wn) -- (no);
\draw [->, thick] (ne) -- (wn);
\draw [->, thick] (we) -- (ne);
\draw [->, thick] (se) -- (we);
\draw [->, thick] (ws) -- (se);
\draw [->, thick] (so) -- (ws);
\end{scope}
\draw [->, thick] (in-o) -- (out-so);
\draw [->, thick] (out-no) -- (in-o);
\draw [->, red, thick] (in-n) -- (out-no);
\draw [->, thick] (out-ne) -- (in-n);
\draw [->, thick] (in-e) -- (out-ne);
\draw [->, thick] (out-se) -- (in-e);
\draw [->, thick] (in-s) -- (out-se);
\draw [->, thick] (out-so) -- (in-s);
\end{tikzpicture}
\end{center}
\caption{McKay quiver $Q_{\rho_{\mathrm{Nat}}}(D_{2^4(3)})$.}\label{fig:MQ:D4.1}
\end{figure}

\begin{figure}[H]
\begin{center}
%%% $D_{2^4\cdot 5}$
\begin{tikzpicture}
[scale=.9,>=Stealth,auto=left,every node/.style={circle,fill=blue!20,inner sep=1pt,font=\tiny,draw},twodim/.style={circle,fill=green!20,inner sep=0pt,font=\tiny,draw}]
%\draw[step=1cm,help lines] (-4,-4) grid (4,4);
\begin{scope}[name prefix = in-]
% Inner square with wings
\node (so) at  (-1,-1) {$15$};
\node[twodim] (o) at  (-1,0) {$(4,2)$};
\node (no) at  (-1,1) {$13$};
\node[twodim] (n) at  (0,1) {$(4,0)$};
\node (ne) at  (1,1) {$11$};
\node[twodim] (e) at  (1,0) {$(1,2)$};
\node (se) at  (1,-1) {$9$};
\node[twodim] (s) at  (0,-1) {$(1,0)$};
\node (wso) at  (-.25,-.25) {$7$};
\node (wno) at  (-.25,.25) {$5$};
\node (wne) at  (.25,.25) {$3$};
\node (wse) at  (.25,-.25) {$1$};
\draw [->, thick] (o) -- (so);
\draw [->, thick] (no) -- (o);
\draw [->, red, thick] (n) -- (no);
\draw [->, thick] (ne) -- (n);
\draw [->, thick] (e) -- (ne);
\draw [->, thick] (se) -- (e);
\draw [->, thick] (s) -- (se);
\draw [->, thick] (so) -- (s);
\draw [->, thick] (wso) -- (s);
\draw [->, thick] (o) -- (wso);
\draw [->, thick] (wno) -- (o);
\draw [->, red, thick] (n) -- (wno);
\draw [->, thick] (wne) -- (n);
\draw [->, thick] (e) -- (wne);
\draw [->, thick] (wse) -- (e);
\draw [->, thick] (s) -- (wse);
\end{scope}
\begin{scope}[name prefix = mid-]
\node[twodim] (so) at  (-2,-2) {$(3,3)$};
\node[twodim] (o) at  (-2,0) {$(2,2)$};
\node[twodim] (no) at  (-2,2) {$(3,1)$};
\node[twodim] (n) at  (0,2) {$(2,0)$};
\node[twodim] (ne) at  (2,2) {$(2,3)$};
\node[twodim] (e) at  (2,0) {$(3,2)$};
\node[twodim] (se) at  (2,-2) {$(2,1)$};
\node[twodim] (s) at  (0,-2) {$(3,0)$};
\draw [->, thick] (o) -- (so);
\draw [->, thick] (no) -- (o);
\draw [->, red, thick] (n) -- (no);
\draw [->, thick] (ne) -- (n);
\draw [->, thick] (e) -- (ne);
\draw [->, thick] (se) -- (e);
\draw [->, thick] (s) -- (se);
\draw [->, thick] (so) -- (s);
\end{scope}
\draw [->, thick] (in-o) -- (mid-so);
\draw [->, thick] (mid-no) -- (in-o);
\draw [->, red, thick] (in-n) -- (mid-no);
\draw [->, thick] (mid-ne) -- (in-n);
\draw [->, thick] (in-e) -- (mid-ne);
\draw [->, thick] (mid-se) -- (in-e);
\draw [->, thick] (in-s) -- (mid-se);
\draw [->, thick] (mid-so) -- (in-s);
\begin{scope}[name prefix = out-]
\node[twodim] (so) at  (-3,-3) {$(1,3)$};
\node (o) at  (-3,0) {$10$};
\node[twodim] (no) at  (-3,3) {$(1,1)$};
\node (n) at  (0,3) {$8$};
\node[twodim] (ne) at  (3,3) {$(4,3)$};
\node (e) at  (3,0) {$14$};
\node[twodim] (se) at  (3,-3) {$(4,1)$};
\node (s) at  (0,-3) {$12$};
\node (wn) at (0,4) {$0$};
\node (we) at (4,0) {$6$};
\node (ws) at (0,-4) {$4$};
\node (wo) at (-4,0) {$2$};
\draw [->, thick] (o) -- (so);
\draw [->, thick] (no) -- (o);
\draw [->, red, thick] (n) -- (no);
\draw [->, thick] (ne) -- (n);
\draw [->, thick] (e) -- (ne);
\draw [->, thick] (se) -- (e);
\draw [->, thick] (s) -- (se);
\draw [->, thick] (so) -- (s);
\draw [->, thick] (wo) -- (so);
\draw [->, thick] (no) -- (wo);
\draw [->, red, thick] (wn) -- (no);
\draw [->, thick] (ne) -- (wn);
\draw [->, thick] (we) -- (ne);
\draw [->, thick] (se) -- (we);
\draw [->, thick] (ws) -- (se);
\draw [->, thick] (so) -- (ws);
\end{scope}
\draw [->, thick] (mid-o) -- (out-so);
\draw [->, thick] (out-no) -- (mid-o);
\draw [->, red, thick] (mid-n) -- (out-no);
\draw [->, thick] (out-ne) -- (mid-n);
\draw [->, thick] (mid-e) -- (out-ne);
\draw [->, thick] (out-se) -- (mid-e);
\draw [->, thick] (mid-s) -- (out-se);
\draw [->, thick] (out-so) -- (mid-s);
\end{tikzpicture}
\end{center}
\caption{McKay quiver $Q_{\rho_{\mathrm{Nat}}}(D_{2^4(5)})$.}\label{fig:MQ:D4.2}
\end{figure}

\subsection{McKay quiver of the groups $P'_{8\cdot 3^k}$}

From Subsection~\ref{irreducible-representations.P} the group $P'_{8\cdot 3^k}$ has $7\cdot3^{k-1}$ irreducible representations: $3^k$ one-dimensional representations
$\alpha_j$ with $j=0,\dots,3^k-1$, $3^{k}$ two-dimensional representations $\varrho_{t}$ with $j=0,\dots,3^k-1$ and $3^{k-1}$ three-dimensional representations $\varsigma_s$ with $s=0,\dots,3^{k-1}-1$ which correspond to each of the vertices of the McKay Quiver $Q_{\rho_{\mathrm{Nat}}}(G)$. The natural representation $\rho_{\mathrm{Nat}}\colon P'_{8\cdot 3^k}\to U(2)$ is $\rho_{\mathrm{Nat}}=\varrho_{1}$, whose character is denoted by $\psi_{1}$.

\subsubsection{Arrows going out from one-dimensional representations $\alpha_j$}

Consider a one-dimensional representation $\alpha_i$, recall that we are denoting its character by $\chi_i$. From Table~\ref{tab:CT.P} the character $\psi_{1}\cdot\chi_i$ of the representation
$\rho_{\mathrm{Nat}}\otimes\alpha_i=\varrho_{1}\otimes\alpha_i$ is given in Table~\ref{tab:Char.Nat.1dim}.

\begin{table}[H]
\begin{equation*}
\setlength{\extrarowheight}{5pt}
\begin{array}{|c|c|c|c|c|c|c|}\hline
 \mathsf{1}_l & \mathsf{1}_{l}^{\mathsf{+}} & \mathsf{4}_l^{\mathsf{a}} & \mathsf{4}_l^{\mathsf{b}} & \mathsf{4}_l^{\mathsf{c}} & \mathsf{4}_l^{\mathsf{d}} & \mathsf{6}_l\\\hline
 2\zeta_{3^k}^{3l(i+1)} & -2\zeta_{3^k}^{3l(i+1)} & -\zeta_{3^k}^{(3l+1)(i+1)} & -\zeta_{3^k}^{(3l+2)(i+1)} & \zeta_{3^k}^{(3l+1)(i+1)} & \zeta_{3^k}^{(3l+2)(i+1)} & 0\\\hline
\multicolumn{7}{l}{\text{Note: $\zeta_{3^k}=e^{\frac{2\pi i}{3^k}}$, $0\leq i\leq 3^{k}-1$, $0\leq j\leq 3^{k-1}-1$.}}
\end{array}
\end{equation*}
\caption{Character of $\rho_{\mathrm{Nat}}\otimes\alpha_i=\varrho_{1}\otimes\alpha_i$.}\label{tab:Char.Nat.1dim}
\end{table}

Comparing Table~\ref{tab:Char.Nat.1dim} with Table~\ref{tab:CT.P} we can see that the character $\psi_{1}\cdot\chi_i$ is the character $\psi_{i+1}$ of the two-dimensional irreducible representation $\varrho_{i+1}$. Hence
\begin{equation}\label{eq:1dim.Case.P}
\rho_{\mathrm{Nat}}\otimes\alpha_i=\varrho_{1}\otimes\alpha_i\cong\varrho_{i+1}.
\end{equation}

\subsubsection{Arrows going out from two-dimensional representations $\varrho_i$}

Consider a two-dimensional representation $\varrho_{i}$, recall that we are denoting its character by $\psi_{i}$. From Table~\ref{tab:CT.P} the character $\psi_{1}\cdot\psi_{i}$ of the representation $\rho_{\mathrm{Nat}}\otimes\varrho_{i}=\varrho_{1}\otimes\varrho_{i}$ is given in Table~\ref{tab:Char.Nat.2dim.P}.

\begin{table}[H]
\begin{equation*}
\setlength{\extrarowheight}{5pt}
\begin{array}{|c|c|c|c|c|c|c|}\hline
\mathsf{1}_l & \mathsf{1}_{l}^{\mathsf{+}} & \mathsf{4}_l^{\mathsf{a}} & \mathsf{4}_l^{\mathsf{b}} & \mathsf{4}_l^{\mathsf{c}} & \mathsf{4}_l^{\mathsf{d}} & \mathsf{6}_l\\\hline
 4\zeta_{3^k}^{3l(i+1)} & 4\zeta_{3^k}^{3l(i+1)} & \zeta_{3^k}^{(3l+1)(i+1)} & \zeta_{3^k}^{(3l+2)(i+1)} & \zeta_{3^k}^{(3l+1)(i+1)} & \zeta_{3^k}^{(3l+2)(i+1)} & 0\\\hline
\multicolumn{7}{l}{\text{Note: $\zeta_{3^k}=e^{\frac{2\pi i}{3^k}}$, $0\leq i\leq 3^{k}-1$, $0\leq j\leq 3^{k-1}-1$.}}
\end{array}
\end{equation*}
\caption{Character of $\rho_{\mathrm{Nat}}\otimes\varrho_{i}=\varrho_{1}\otimes\varrho_{i}$.}\label{tab:Char.Nat.2dim.P}
\end{table}

Writing $4\zeta_{3^k}^{3l(i+1)}=\zeta_{3^k}^{3l(i+1)}+3\zeta_{3^k}^{3l(i+1)}$ and $0=\zeta_{3^k}^{3l(i+1)}-\zeta_{3^k}^{3l(i+1)}$ and
comparing Table~\ref{tab:Char.Nat.2dim.P} with Table~\ref{tab:CT.P} we can see that the character $\psi_{1}\cdot\psi_{i}$ is the character $\chi_{i+1}+\varphi_{i+1}$ of the four-dimensional representation $\alpha_{i+1}\oplus\varsigma_{i+1}$. Hence
\begin{equation}\label{eq:2dim.Case.P}
\rho_{\mathrm{Nat}}\otimes\varrho_i=\varrho_{1}\otimes\varrho_i\cong\alpha_{i+1}\oplus\varsigma_{i+1}.
\end{equation}

\subsubsection{Arrows going out from three-dimensional representations $\varsigma_s$}

Consider a three-dimensional representation $\varsigma_{s}$, recall that we are denoting its character by $\varphi_{s}$. From Table~\ref{tab:CT.P} the character $\psi_{1}\cdot\varphi_{s}$ of the representation $\rho_{\mathrm{Nat}}\otimes\varsigma_{s}=\varrho_{1}\otimes\varsigma_{s}$ is given in Table~\ref{tab:Char.Nat.3dim.P}.

\begin{table}[H]
\begin{equation*}
\setlength{\extrarowheight}{5pt}
\begin{array}{|c|c|c|c|c|c|c|c|}\hline
\text{\small Class}& \mathsf{1}_l & \mathsf{1}_{l}^{\mathsf{+}} & \mathsf{4}_l^{\mathsf{a}} & \mathsf{4}_l^{\mathsf{b}} & \mathsf{4}_l^{\mathsf{c}} & \mathsf{4}_l^{\mathsf{d}} & \mathsf{6}_l\\\hline
\psi_{1}\cdot\varphi_{s} & 6\zeta_{3^k}^{3l(s+1)} & -6\zeta_{3^k}^{3l(s+1)} & 0 &  0 &  0 &  0 & 0\\\hline
\multicolumn{8}{l}{\text{Note: $\zeta_{3^k}=e^{\frac{2\pi i}{3^k}}$, $0\leq s\leq 3^{k-1}-1$, $0\leq j\leq 3^{k-1}-1$.}}
\end{array}
\end{equation*}
\caption{Character of $\rho_{\mathrm{Nat}}\otimes\varsigma_{s}=\varrho_{1}\otimes\varsigma_{s}$.}\label{tab:Char.Nat.3dim.P}
\end{table}

Observing that
\begin{equation*}
\pm6\zeta_{3^k}^{3l(s+1)}=\pm2\zeta_{3^k}^{3l(s+1)}\pm2\zeta_{3^k}^{3l(3^{k-1}+s+1)}\pm2\zeta_{3^k}^{3l(2\cdot3^{k-1}+s+1)}
\end{equation*}
and that for $m=1,2$ we have
\begin{align*}
&\zeta_{3^k}^{(3l+m)(s+1)}+\zeta_{3^k}^{(3l+m)(3^{k-1}+s+1)}+\zeta_{3^k}^{(3l+m)(2\cdot3^{k-1}+s+1)}\\
&=\zeta_{3^k}^{(3l+m)(s+1)}+\zeta_{3^k}^{(3l+m)3^{k-1}}\zeta_{3^k}^{(3l+m)(s+1)}+\zeta_{3^k}^{(3l+m)2\cdot3^{k-1}}\zeta_{3^k}^{(3l+m)(s+1)}\\
&=\zeta_{3^k}^{(3l+m)(s+1)}+\zeta_{3}^{(3l+m)}\zeta_{3^k}^{(3l+m)(s+1)}+\zeta_{3}^{2(3l+m)}\zeta_{3^k}^{(3l+m)(s+1)}\\
&=\zeta_{3^k}^{(3l+m)(s+1)}\bigl(1+\zeta_{3}^{m}+\zeta_{3}^{2m}\bigr)=\zeta_{3^k}^{(3l+m)(s+1)}\bigl(1+\zeta_{3}+\zeta_{3}^{2}\bigr)=0.
\end{align*}
and comparing Table~\ref{tab:Char.Nat.3dim.P} with Table~\ref{tab:CT.P} we can see that the character $\psi_{1}\cdot\varphi_{s}$ is the character $\psi_{s+1}+\psi_{3^{k-1}+s+1}+\psi_{2\cdot3^{k-1}+s+1}$ of the six-dimensional representation $\varrho_{s+1}\oplus\varrho_{3^{k-1}+s+1}\oplus\varrho_{2\cdot3^{k-1}+s+1}$. Hence
\begin{equation}\label{eq:3dim.Case.P}
\rho_{\mathrm{Nat}}\otimes\varsigma_s=\varrho_{1}\otimes\varsigma_{s}\cong\varrho_{s+1}\oplus\varrho_{3^{k-1}+s+1}\oplus\varrho_{2\cdot3^{k-1}+s+1}.
\end{equation}

\subsubsection{The McKay quiver $Q_{\rho_{\mathrm{Nat}}}(P'_{8\cdot 3^k})$}\label{sssec:MQ.P}

In summary, the McKay quiver $Q_{\rho_{\mathrm{Nat}}}(P'_{8\cdot 3^k})$ is given by the following arrows corresponding to the decompositions \eqref{eq:1dim.Case.P}, \eqref{eq:2dim.Case.P} and \eqref{eq:3dim.Case.P}. Remember that the indices $j$ of $\alpha_j$ and $\varrho_j$ are modulo $3^k$ and the indices $s$ of $\varsigma_s$ are modulo $3^{k-1}$.
%\vspace{12pt}

\begin{equation}\label{mq:P83k}
\begin{tikzcd}
\alpha_{i} \arrow[r]  & \varrho_{i+1}\\
\end{tikzcd}\qquad\qquad
\begin{tikzcd}
   & \alpha_{i+1} \\
\varrho_{i}\arrow[rd] \arrow[ru] &  \\
& \varsigma_{i+1}\\
\end{tikzcd}\qquad\qquad
\begin{tikzcd}
   & \varrho_{s+1}\\
\varsigma_{s}\arrow[rd] \arrow[r] \arrow[ru] & \varrho_{3^{k-1}+s+1} \\
& \varrho_{2\cdot3^{k-1}+s+1}
\end{tikzcd}
\end{equation}

Figure~\ref{fig:MQ:P3.2} shows the McKay quiver $Q_{\rho_{\mathrm{Nat}}}(P'_{8\cdot 3^2})$.
The vertices corresponding to the one-dimensional representions $\alpha_{j}$ are \textcolor{blue}{blue}, the vertices corresponding to the two-dimensional representions $\varrho_{j}$ are \textcolor{green}{green} and the vertices corresponding to the three-dimensional representions $\varsigma_{s}$ are \textcolor{red}{red}. Note that this quiver is not planar.
\begin{figure}[H]
\begin{center}
\begin{tikzpicture}[scale=.8,>=Stealth,auto=right]
 \tikzmath{% All math operations should be defined here or in the code with \pgfmathtruncatemacro{}{}, it is not possible perfomr operations inside a foreach for instance
\n=9; %Vakue of $n$
\s=\n-1;
\r=\n/3-1;
}
\begin{scope}[name prefix = 1-,every node/.style={circle,fill=blue!20,inner sep=1pt,font=\tiny,draw}]
\foreach \i in {0,1,...,\s} {\node (a\i) at ({4.5*cos(4*(360/\n)*\i)},{4.5*sin(4*(360/\n)*\i)}) {$\alpha_{\i}$};};
\end{scope}
\begin{scope}[name prefix = 2-,every node/.style={circle,fill=green!20,inner sep=1pt,font=\tiny,draw}]
\foreach \i in {0,1,...,\s} {\node (a\i) at ({2.5*cos(4*(360/\n)*\i+(360/2))},{2.5*sin(4*(360/\n)*\i+(360/2))}) {$\varrho_{\i}$};};
\end{scope}
\begin{scope}[name prefix = 3-,every node/.style={circle,fill=red!20,inner sep=1pt,font=\tiny,draw}]
\foreach \i in {0,1,...,\r} {\node (a\i) at ({1*cos(3*(360/\n)*\i+3*(360/(2*\n))},{1*sin(3*(360/\n)*\i+3*(360/(2*\n))}) {$\varsigma_{\i}$};};
\end{scope}
% Arrows form 1-dim representations
\foreach \i [evaluate=\i as \k using {int(mod(\i+1,\n))}] in {0,1,...,\s} {\draw [->, thick, blue] (1-a\i) -- (2-a\k);};
% Arrows form 2-dim representations
\foreach \i [evaluate=\i as \k using {int(mod(\i+1,\n))}] in {0,1,...,\s} {\draw [->, thick] (2-a\i) -- (1-a\k);};
\foreach \i [evaluate=\i as \k using {int(mod(\i+1,\n))},evaluate=\i as \j using {int(mod(\i+1,\r+1))}] in {0,1,...,\s} {\draw [->, thick] (2-a\i) -- (3-a\j);};
% Arrows form 3-dim representations
\foreach \i [evaluate=\i as \k using {int(mod(\i+1,\n))},evaluate=\i as \j using {int(mod(\i+4,\n))},evaluate=\i as \l using {int(mod(\i+7,\n))}] in {0,1,...,\r} {\draw [->, thick, red] (3-a\i) -- (2-a\k);\draw [->, thick, red] (3-a\i) -- (2-a\j);\draw [->, thick, red] (3-a\i) -- (2-a\l);};
\end{tikzpicture}
\end{center}
\caption{McKay quiver $Q_{\rho_{\mathrm{Nat}}}(P'_{8\cdot 3^2})$}\label{fig:MQ:P3.2}
\end{figure}

\subsection{McKay quivers of groups of the form $\Gamma\times\mathbf{C}_m$}\label{ssec:MQ.GxCm}

Now that we know the McKay quivers $Q_{\rho_{\mathrm{Nat}}}(\Gamma)$, where $\Gamma$ is one of the groups $\mathrm{B}\mathbf{D}_{2q}$, $\mathrm{B}\mathbf{T}$, $\mathrm{B}\mathbf{O}$, $\mathrm{B}\mathbf{I}$, $D_{2^{k+1}(2r+1)}$ and $P'_{8\cdot 3^k}$, given in Table~\ref{tab:MK.graph} and Subsections~\ref{sssec:MQ.Dnq} and \ref{sssec:MQ.P}, using Theorem~\ref{thm:MQ.prod} we can compute the McKay quivers $Q_{\rho_{\mathrm{NAT}}}(\Gamma\times\mathbf{C}_m)$ of the rest of the small finite subgroups of $U(2)$, which are of the form $\Gamma\times\mathbf{C}_m$ with $m$ relatively prime to the order of $\Gamma$ (see Remark~\ref{rem:GxC}).

The natural representation $\rho_{\mathrm{NAT}}$ of the group $\Gamma\times\mathbf{C}_m$ is the tensor product $\rho_{\mathrm{Nat}}\otimes\beta_1$, where $\rho_{\mathrm{Nat}}$ is the natural representation of the group $\Gamma$ and $\beta_1$ is the one-dimensional representation of $\mathbf{C}_m$ given by $\beta_1(\zeta_{m})=\zeta_m$ (see Subsection~\ref{sssec:Cn}).

In order to use Theorem~\ref{thm:MQ.prod} we need the McKay quiver $Q_{\beta_{1}}(\mathbf{C}_m)$. Recall from Subsection~\ref{sssec:Cn} that $\mathbf{C}_m$ has $m$ one-dimensional representations $\beta_j$, $0\leq j< m$ with characters denoted by $\chi_j$.
From Table~\ref{tab:CT.Cq} and \eqref{eq:aij}  we have
\begin{equation*}
b_{jk}=\inpr{\chi_{1}\chi_j}{\chi_k}=\frac{1}{m}\sum_{q=0}^{m-1}\zeta_m^q\zeta_m^{jq}\zeta_m^{-kq}=\frac{1}{m}\sum_{q=0}^{m-1}\zeta_m^{q(j-k+1)}.
\end{equation*}
Recall that if $m$ is not a divisor of $t$ we have $\sum_{q=0}^{m-1}\zeta_n^{tq}=0$, and if $t\equiv 0\mod m$, then $\sum_{q=0}^{m-1}\zeta_m^{tq}=m$.
Hence the McKay matrix $A_{\rho_1}(\mathbf{C}_m)=\{b_{jk}\}_{j,k=0}^{m-1}$ of $\mathbf{C}_m$ relative to $\beta_1$ is given by
\begin{equation*}
b_{jk}=\begin{cases}
        0 & \text{if $k\not\equiv j+1\mod m$,}\\
        1 & \text{if $k\equiv j+1\mod m$.}\\
       \end{cases}
\end{equation*}
Therefore, in the McKay quiver $Q_{\beta_{1}}(\mathbf{C}_m)$ we have one arrow going out from the vertex $\beta_j$:
\begin{equation}\label{eq:Qb1.Cm}
 \beta_j\rightarrow \beta_{j+1\pmod m}.%,\qquad\text{with addition modulo $m$.}
\end{equation}
Figure~\ref{fig:MQCn.rho1} shows some examples of McKay quivers $Q_{\beta_{1}}(\mathbf{C}_m)$.

\begin{figure}[H]
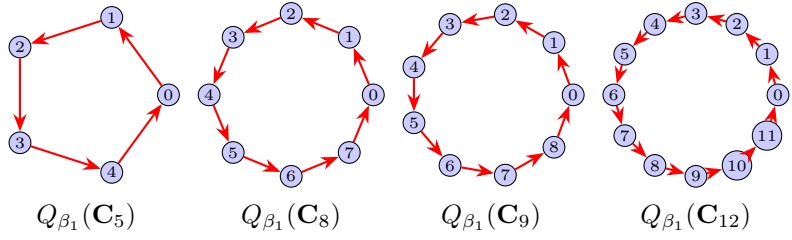

\begin{center}
\MQCn{5}\MQCn{8}\MQCn{9}\MQCn{12}
\end{center}
\caption{McKay quiver $Q_{\beta_{1}}(\mathbf{C}_m)$.}\label{fig:MQCn.rho1}
\end{figure}

Let $\psi_i$, with $0\leq i\leq r$, be the irreducible characters of $\Gamma$. Let $A_{\rho_\mathrm{Nat}}(\Gamma)=\{a_{ih}\}_{i,h=0}^r$ be the McKay matrix of $\Gamma$ relative to the natural representation $\rho_\mathrm{Nat}$.
By Theorem~\ref{thm:CT.prod} the irreducible characters of $\Gamma\times\mathbf{C}_m$ are
\begin{equation*}
 \psi_i\times\chi_j,\quad 0\leq i\leq r,\ 0\leq j< m.
\end{equation*}
By Theorem~\ref{thm:MQ.prod} we have the following proposition.

\begin{proposition}\label{prop:MQ.GxCm}
The McKay quiver $Q_{\rho_{\mathrm{NAT}}}(\Gamma\times\mathbf{C}_m)$ is given as follows:
\begin{description}
 \item[Vertices] The vertices are given by pairs
\begin{equation*}
 (\psi_i,\chi_j),\quad 0\leq i\leq r,\ 0\leq j< m.
\end{equation*}
\item[Arrows] From the vertex $(\psi_i,\chi_j)$ there are $a_{ih}$ arrows to the vertex $(\psi_h,\chi_{j+1})$
\begin{equation*}
 (\psi_i,\chi_j)\xrightarrow{a_{ih}}(\psi_h,\chi_{j+1}).
\end{equation*}
\end{description}
\end{proposition}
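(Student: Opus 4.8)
The plan is to obtain this as a direct specialization of Theorem~\ref{thm:MQ.prod}, taking $G=\Gamma$, $H=\mathbf{C}_m$, $\rho=\rho_{\mathrm{Nat}}$ and $\sigma=\beta_1$, so that the tensor product $\rho\otimes\sigma=\rho_{\mathrm{NAT}}$ is precisely the natural representation of $\Gamma\times\mathbf{C}_m$ described at the start of this subsection. Everything then reduces to reading off the two ingredient McKay matrices and multiplying them.

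First I would pin down the vertex set. By Theorem~\ref{thm:CT.prod} the distinct irreducible characters of $\Gamma\times\mathbf{C}_m$ are exactly the products $\psi_i\times\chi_j$ with $0\leq i\leq r$ and $0\leq j< m$, and each such character is, by definition, a vertex of $Q_{\rho_{\mathrm{NAT}}}(\Gamma\times\mathbf{C}_m)$. This yields the claimed labeling of the vertices by pairs $(\psi_i,\chi_j)$ and confirms there are $(r+1)m$ of them.

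Next I would count the arrows. By Theorem~\ref{thm:MQ.prod}, the number of arrows from $(\psi_i,\chi_j)$ to $(\psi_h,\chi_k)$ equals the product $a_{ih}b_{jk}$, where $\{a_{ih}\}$ is the McKay matrix $A_{\rho_{\mathrm{Nat}}}(\Gamma)$ and $\{b_{jk}\}$ is the McKay matrix $A_{\beta_1}(\mathbf{C}_m)$ of $\mathbf{C}_m$ relative to $\beta_1$. The latter was computed just above in \eqref{eq:Qb1.Cm}: one has $b_{jk}=1$ precisely when $k\equiv j+1\pmod m$, and $b_{jk}=0$ otherwise. Substituting, the product $a_{ih}b_{jk}$ vanishes unless $k\equiv j+1\pmod m$, in which case it equals $a_{ih}$. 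Hence the only arrows leaving $(\psi_i,\chi_j)$ land on vertices of the form $(\psi_h,\chi_{j+1})$, with indices read modulo $m$ in the second coordinate, and there are exactly $a_{ih}$ such arrows for each $h$. This is precisely the assertion of the proposition.

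I do not expect a genuine obstacle here: the mathematical content is entirely supplied by Theorem~\ref{thm:MQ.prod} together with the prior determination of $Q_{\beta_1}(\mathbf{C}_m)$, so the argument is a short substitution. The only point deserving a word of care is the index bookkeeping, namely that the factor $b_{jk}$ forces $k\equiv j+1\pmod m$ and so collapses the second coordinate of the target to $\chi_{j+1}$ under the cyclic convention, leaving the first-coordinate multiplicities $a_{ih}$ untouched.
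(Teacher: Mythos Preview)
Your proposal is correct and follows exactly the paper's approach: the paper states the proposition as an immediate consequence of Theorem~\ref{thm:MQ.prod} (combined with the computation of $b_{jk}$ for $Q_{\beta_1}(\mathbf{C}_m)$ just above), and your argument simply unpacks that application in detail.
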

Following we present examples of McKay quivers $Q_{\rho_{\mathrm{NAT}}}(\Gamma\times\mathbf{C}_m)$.

\subsubsection{McKay quiver $Q_{\rho_{\mathrm{NAT}}}(\mathrm{B}\mathbf{D}_{2(4)}\times\mathbf{C}_3)$}\label{sssec:D2(4)C3}

By Theorem~\ref{thm:sfs} the group $\mathrm{B}\mathbf{D}_{2(4)}\times\mathbf{C}_3$ is the small dihedral group
\begin{equation*}
\mathbb{D}_{7,4}\cong\mathrm{B}\mathbf{D}_{2(4)}\times\mathbf{C}_3.
\end{equation*}
The group $\mathbf{D}_{2(4)}$ has $4$ one-dimensional irreducible representations $\rho_i$ with $i=0,1,2,3$, and $3$ two-dimensional irreducible representations $\rho_{3+t}$ with $t=1,2,3$. Hence, $\mathrm{B}\mathbf{D}_{2(4)}\times\mathbf{C}_3$ has $21$ irreducible representations given by
\begin{equation*}
\rho_i\times\beta_j,\quad 0\leq i<7,\ 0\leq j<3.
\end{equation*}
Thus, there are $21$ vertices in the McKay quiver $Q_{\rho_{\mathrm{NAT}}}(\mathrm{B}\mathbf{D}_{2(4)}\times\mathbf{C}_3)$.
Figure~\ref{fig:MQ.D2q.Cm} shows the McKay quiver $Q_{\rho_{\mathrm{NAT}}}(\mathrm{B}\mathbf{D}_{2(4)}\times\mathbf{C}_3)$.
We are labeling the vertices corresponding to the representations $\rho_i\times\beta_j$ with $\overset{\textcolor{red}{j}}{(i)}$, the vertices corresponding to the one-dimensional representions are \textcolor{blue}{blue} and the vertices corresponding to the two-dimensional representions are \textcolor{green}{green}.

\begin{figure}[H]
\begin{center}
\begin{tikzpicture}[>=Stealth,auto=right]
\tikzmath{
\n=3; %Vakue of $n$
\s=\n-1;
\r=\n/3-1;
}
\begin{scope}[every node/.style={circle,fill=green!20,inner sep=0pt,font=\tiny,draw}]
\foreach \i [evaluate=\i as \j using {int(mod(2*\i,\n))}] in {0,1,...,\s} {\node (a\i) at ({5*cos((360/\n)*\i-(360/4)},{5*sin((360/\n)*\i-(360/4)}) {$\overset{\textcolor{red}{\j}}{(4)}$};};
\foreach \i [evaluate=\i as \j using {int(mod(\i+1,\n))},evaluate=\i as \k using {int(mod(2*\i+1,\n))}] in {0,...,\s} \path (a\i) -- (a\j) node (b\i) [pos=0.5] {$\overset{\textcolor{red}{\k}}{(5)}$};
\foreach \i [evaluate=\i as \j using {int(mod(\i+1,\n))},evaluate=\i as \k using {int(mod(2*\i+2,\n))}] in {0,...,\s} \path (b\i) -- (b\j) node (c\i) [pos=0.5] {$\overset{\textcolor{red}{\k}}{(6)}$};
\end{scope}
\begin{scope}[every node/.style={circle,fill=blue!20,inner sep=0pt,font=\tiny,draw}]
\foreach \i [evaluate=\i as \j using {int(mod(\i+1,\n))},evaluate=\i as \k using {int(mod(2*\i,\n))}] in {0,...,\s} {\path (c\i) -- (c\j) node (d\i) [pos=0.5] {$\overset{\textcolor{red}{\k}}{(2)}$};};
\foreach \i [evaluate=\i as \k using {int(mod(2*\i,\n))}] in {0,1,...,\s} {\node (e\i) at ({1.7*cos((360/\n)*\i+(360/4)},{1.7*sin((360/\n)*\i+(360/4)}) {$\overset{\textcolor{red}{\k}}{(3)}$};};
\foreach \i [evaluate=\i as \k using {int(mod(2*\i,\n))}] in {0,1,...,\s} {\node (f\i) at ({3.5*cos((360/\n)*\i+(360/4)},{3.5*sin((360/\n)*\i+(360/4)}) {$\overset{\textcolor{red}{\k}}{(1)}$};};
\foreach \i [evaluate=\i as \k using {int(mod(2*\i,\n))}] in {0,1,...,\s} {\node (g\i) at ({4.5*cos((360/\n)*\i+(360/4)},{4.5*sin((360/\n)*\i+(360/4)}) {$\overset{\textcolor{red}{\k}}{(0)}$};};
\end{scope}
% Arrows form 2-dim representations
\draw [->, thick] (a1) -- (g0);
\draw [->, thick] (a1) -- (f0);
\draw [->, thick] (a2) -- (g1);
\draw [->, thick] (a2) -- (f1);
\draw [->, thick] (a0) -- (g2);
\draw [->, thick] (a0) -- (f2);
\draw [->, thick] (c0) -- (e0);
\draw [->, thick] (c0) -- (d0);
\draw [->, thick] (c1) -- (e1);
\draw [->, thick] (c1) -- (d1);
\draw [->, thick] (c2) -- (e2);
\draw [->, thick] (c2) -- (d2);
% Arrows form 1-dim representations
\draw [->, thick] (g1) -- (a0);
\draw [->, thick] (f1) -- (a0);
\draw [->, thick] (b2) -- (a0);
\draw [->, thick] (g2) -- (a1);
\draw [->, thick] (f2) -- (a1);
\draw [->, thick] (b0) -- (a1);
\draw [->, red, thick] (g0) -- (a2);
\draw [->, red, thick] (f0) -- (a2);
\draw [->, red, thick] (b1) -- (a2);
\draw [->, thick] (a0) -- (b0);
\draw [->, thick] (c2) -- (b0);
\draw [->, thick] (a1) -- (b1);
\draw [->, thick] (c0) -- (b1);
\draw [->, thick] (a2) -- (b2);
\draw [->, thick] (c1) -- (b2);
\draw [->, thick] (b0) -- (c0);
\draw [->, thick] (e2) -- (c0);
\draw [->, thick] (d2) -- (c0);
\draw [->, red, thick] (b1) -- (c1);
\draw [->, red, thick] (e0) -- (c1);
\draw [->, red, thick] (d0) -- (c1);
\draw [->, thick] (b2) -- (c2);
\draw [->, thick] (e1) -- (c2);
\draw [->, thick] (d1) -- (c2);
\end{tikzpicture}
\end{center}
\caption{McKay quiver $Q_{\rho_{\mathrm{NAT}}}(\mathrm{B}\mathbf{D}_{2(4)}\times\mathbf{C}_3)$.}\label{fig:MQ.D2q.Cm}
\end{figure}

\subsubsection{McKay quiver $Q_{\rho_{\mathrm{NAT}}}(D_{8(3)}\times\mathbf{C}_5)$}

By Theorem~\ref{thm:sfs} the group $D_{8(3)}\times\mathbf{C}_5$ is the small dihedral group
\begin{equation*}
\mathbb{D}_{13,3}\cong D_{8(3)}\times\mathbf{C}_5.
\end{equation*}
Remember from Subsection~\ref{irreducible-representations} that $D_{8(3)}$ has $8$ one-dimensional irreducible representations $\alpha_i$ with $i=0,\dots,7$, and $4$ two-dimensional irreducible representations $\varrho_{t,s}$ with $t=1,2$ and $s=0,1$. Hence, $D_{8(3)}\times\mathbf{C}_5$ has $60$ irreducible representations: $40$ one-dimensional representations
\begin{equation*}
\alpha_i\times\beta_j,\quad 0\leq i<8,\ 0\leq j<5,
\end{equation*}
and $20$ two-dimensional irreducible representations
\begin{equation*}
\varrho_{t,s}\times\beta_j,\quad t=1,2,\ s=0,1,\ 0\leq j<5.
\end{equation*}
Hence, there are $60$ vertices in the McKay quiver $Q_{\rho_{\mathrm{NAT}}}(D_{8(3)}\times\mathbf{C}_5)$.
Figure~\ref{fig:MQ.D2k2r.Cm} shows the McKay quiver $Q_{\rho_{\mathrm{NAT}}}(D_{8(3)}\times\mathbf{C}_5)$.
We are labeling the \textcolor{blue}{blue} vertices corresponding to the one-dimensional representations $\alpha_i\times\beta_j$ with $\overset{\textcolor{red}{j}}{(i)}$, and the \textcolor{green}{green} vertices corresponding to the two-dimensional representions $\varrho_{t,s}\times\beta_j$ with $\overset{\textcolor{red}{j}}{(t,s)}$.

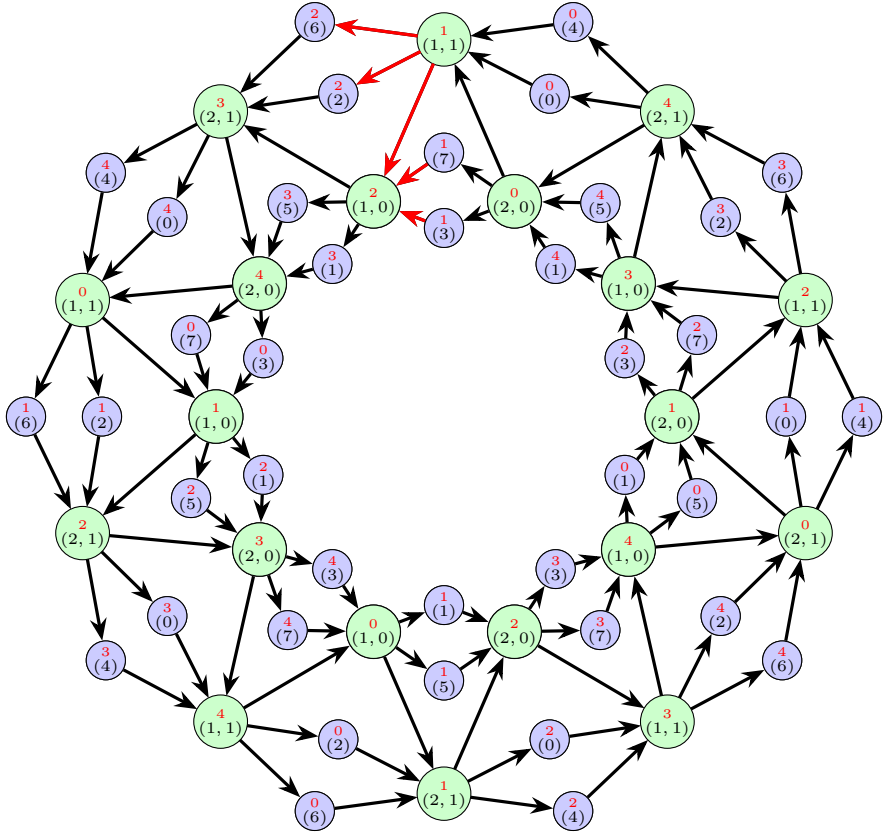
\begin{figure}[H]
\begin{center}
\begin{tikzpicture}[>=Stealth,auto=right]
\tikzmath{
\n=10; %Vakue of $n$
\s=\n-1;
\r=\n/3-1;
}
\begin{scope}[every node/.style={circle,fill=green!20,inner sep=0pt,font=\tiny,draw}]
\foreach \i in {0,1,...,\s} {\node (a\i) at ({5*cos((360/\n)*\i+(360/(2*\n))},{5*sin((360/\n)*\i+(360/(2*\n))}) {$\overset{a}{(\i,0)}$};};
\foreach \i in {0,1,...,\s} {\node (d\i) at ({3*cos((360/\n)*\i},{3*sin((360/\n)*\i}) {$\overset{d}{(0,\i)}$};};
\end{scope}
\begin{scope}[every node/.style={circle,fill=blue!20,inner sep=0pt,font=\tiny,draw}]
\foreach \i in {0,1,...,\s} {\node (b\i) at ({5.5*cos((360/\n)*\i},{5.5*sin((360/\n)*\i}) {$\overset{0}{(0)}$};};
\foreach \i in {0,1,...,\s} {\node (c\i) at ({4.5*cos((360/\n)*\i},{4.5*sin((360/\n)*\i}) {$\overset{0}{(0)}$};};
\foreach \i in {0,1,...,\s} {\node (e\i) at ({3.5*cos((360/\n)*\i+(360/(2*\n))},{3.5*sin((360/\n)*\i+(360/(2*\n))}) {$\overset{0}{(0)}$};};
\foreach \i in {0,1,...,\s} {\node (f\i) at ({2.5*cos((360/\n)*\i+(360/(2*\n))},{2.5*sin((360/\n)*\i+(360/(2*\n))}) {$\overset{0}{(0)}$};};
\foreach \i [evaluate=\i as \j using {int(mod(\i+1,\n))}] in {0,...,\s} \draw [->, very thick] (a\i) -- (b\j);
\foreach \i [evaluate=\i as \j using {int(mod(\i+1,\n))}] in {0,...,\s} \draw [->, very thick] (a\i) -- (c\j);
\foreach \i [evaluate=\i as \j using {int(mod(\i,\n))}] in {0,...,\s} \draw [->, very thick] (b\i) -- (a\j);
\foreach \i [evaluate=\i as \j using {int(mod(\i,\n))}] in {0,...,\s} \draw [->, very thick] (c\i) -- (a\j);
\foreach \i [evaluate=\i as \j using {int(mod(\i,\n))}] in {0,...,\s} \draw [->, very thick] (d\i) -- (e\j);
\foreach \i [evaluate=\i as \j using {int(mod(\i,\n))}] in {0,...,\s} \draw [->, very thick] (d\i) -- (f\j);
\foreach \i [evaluate=\i as \j using {int(mod(\i+1,\n))}] in {0,...,\s} \draw [->, very thick] (e\i) -- (d\j);
\foreach \i [evaluate=\i as \j using {int(mod(\i+1,\n))}] in {0,...,\s} \draw [->, very thick] (f\i) -- (d\j);
\foreach \i [evaluate=\i as \j using {int(mod(\i,\n))}] in {0,...,\s} \draw [->, very thick] (d\i) -- (a\j);
\foreach \i [evaluate=\i as \j using {int(mod(\i+1,\n))}] in {0,...,\s} \draw [->, very thick] (a\i) -- (d\j);
\end{scope}
\begin{scope}[every node/.style={circle,fill=blue!20,inner sep=0pt,font=\tiny,draw}]
% 1-dimensional representations
\node at (b2) {$\overset{\textcolor{red}{0}}{(4)}$};
\node at (b0) {$\overset{\textcolor{red}{1}}{(4)}$};
\node at (b8) {$\overset{\textcolor{red}{2}}{(4)}$};
\node at (b6) {$\overset{\textcolor{red}{3}}{(4)}$};
\node at (b4) {$\overset{\textcolor{red}{4}}{(4)}$};
\node at (f9) {$\overset{\textcolor{red}{0}}{(1)}$};
\node at (f7) {$\overset{\textcolor{red}{1}}{(1)}$};
\node at (f5) {$\overset{\textcolor{red}{2}}{(1)}$};
\node at (f3) {$\overset{\textcolor{red}{3}}{(1)}$};
\node at (f1) {$\overset{\textcolor{red}{4}}{(1)}$};
\node at (b7) {$\overset{\textcolor{red}{0}}{(6)}$};
\node at (b5) {$\overset{\textcolor{red}{1}}{(6)}$};
\node at (b3) {$\overset{\textcolor{red}{2}}{(6)}$};
\node at (b1) {$\overset{\textcolor{red}{3}}{(6)}$};
\node at (b9) {$\overset{\textcolor{red}{4}}{(6)}$};
\node at (f4) {$\overset{\textcolor{red}{0}}{(3)}$};
\node at (f2) {$\overset{\textcolor{red}{1}}{(3)}$};
\node at (f0) {$\overset{\textcolor{red}{2}}{(3)}$};
\node at (f8) {$\overset{\textcolor{red}{3}}{(3)}$};
\node at (f6) {$\overset{\textcolor{red}{4}}{(3)}$};
\node at (c2) {$\overset{\textcolor{red}{0}}{(0)}$};
\node at (c0) {$\overset{\textcolor{red}{1}}{(0)}$};
\node at (c8) {$\overset{\textcolor{red}{2}}{(0)}$};
\node at (c6) {$\overset{\textcolor{red}{3}}{(0)}$};
\node at (c4) {$\overset{\textcolor{red}{4}}{(0)}$};
\node at (e9) {$\overset{\textcolor{red}{0}}{(5)}$};
\node at (e7) {$\overset{\textcolor{red}{1}}{(5)}$};
\node at (e5) {$\overset{\textcolor{red}{2}}{(5)}$};
\node at (e3) {$\overset{\textcolor{red}{3}}{(5)}$};
\node at (e1) {$\overset{\textcolor{red}{4}}{(5)}$};
\node at (c7) {$\overset{\textcolor{red}{0}}{(2)}$};
\node at (c5) {$\overset{\textcolor{red}{1}}{(2)}$};
\node at (c3) {$\overset{\textcolor{red}{2}}{(2)}$};
\node at (c1) {$\overset{\textcolor{red}{3}}{(2)}$};
\node at (c9) {$\overset{\textcolor{red}{4}}{(2)}$};
\node at (e4) {$\overset{\textcolor{red}{0}}{(7)}$};
\node at (e2) {$\overset{\textcolor{red}{1}}{(7)}$};
\node at (e0) {$\overset{\textcolor{red}{2}}{(7)}$};
\node at (e8) {$\overset{\textcolor{red}{3}}{(7)}$};
\node at (e6) {$\overset{\textcolor{red}{4}}{(7)}$};
\end{scope}
\begin{scope}[every node/.style={circle,fill=green!20,inner sep=0pt,font=\tiny,draw}]
% 2-dimensional representations
\node at (d7) {$\overset{\textcolor{red}{0}}{(1,0)}$};
\node at (d5) {$\overset{\textcolor{red}{1}}{(1,0)}$};
\node at (d3) {$\overset{\textcolor{red}{2}}{(1,0)}$};
\node at (d1) {$\overset{\textcolor{red}{3}}{(1,0)}$};
\node at (d9) {$\overset{\textcolor{red}{4}}{(1,0)}$};
\node at (a4) {$\overset{\textcolor{red}{0}}{(1,1)}$};
\node at (a2) {$\overset{\textcolor{red}{1}}{(1,1)}$};
\node at (a0) {$\overset{\textcolor{red}{2}}{(1,1)}$};
\node at (a8) {$\overset{\textcolor{red}{3}}{(1,1)}$};
\node at (a6) {$\overset{\textcolor{red}{4}}{(1,1)}$};
\node at (d2) {$\overset{\textcolor{red}{0}}{(2,0)}$};
\node at (d0) {$\overset{\textcolor{red}{1}}{(2,0)}$};
\node at (d8) {$\overset{\textcolor{red}{2}}{(2,0)}$};
\node at (d6) {$\overset{\textcolor{red}{3}}{(2,0)}$};
\node at (d4) {$\overset{\textcolor{red}{4}}{(2,0)}$};
\node at (a9) {$\overset{\textcolor{red}{0}}{(2,1)}$};
\node at (a7) {$\overset{\textcolor{red}{1}}{(2,1)}$};
\node at (a5) {$\overset{\textcolor{red}{2}}{(2,1)}$};
\node at (a3) {$\overset{\textcolor{red}{3}}{(2,1)}$};
\node at (a1) {$\overset{\textcolor{red}{4}}{(2,1)}$};
\end{scope}
\draw [->, red, very thick] (a2) -- (b3);
\draw [->, red, very thick] (a2) -- (c3);
\draw [->, red, very thick] (a2) -- (d3);
\draw [->, red, very thick] (e2) -- (d3);
\draw [->, red, very thick] (f2) -- (d3);
\end{tikzpicture}
\end{center}
\caption{McKay quiver $Q_{\rho_{\mathrm{NAT}}}(D_{8(3)}\times\mathbf{C}_5)$.}\label{fig:MQ.D2k2r.Cm}
\end{figure}

\subsubsection{McKay quiver $Q_{\rho_{\mathrm{NAT}}}(B\mathbf{T}\times\mathbf{C}_5)$}\label{sssec:BTC5}

By Theorem~\ref{thm:sfs} the group $B\mathbf{T}\times\mathbf{C}_5$ is the small tetrahedral group
\begin{equation*}
\mathbb{T}_{5}\cong B\mathbf{T}\times\mathbf{C}_5.
\end{equation*}
The group $B\mathbf{T}$ has $7$ irreducible representations: $3$ one-dimensional representations $\rho_0$, $\rho_1$ and $\rho_2$; $3$ two-dimensional representations $\rho_3$, $\rho_4$ and $\rho_5$ and $1$ three-dimensional representation $\rho_6$. Hence, $B\mathbf{T}\times\mathbf{C}_5$ has $35$ irreducible representations given by
\begin{equation*}
\rho_i\times\beta_j,\quad 0\leq i<7,\ 0\leq j<5.
\end{equation*}
Thus, there are $35$ vertices in the McKay quiver $Q_{\rho_{\mathrm{NAT}}}(B\mathbf{T}\times\mathbf{C}_5)$.
Figure~\ref{fig:MQ.BT.C5} shows the McKay quiver $Q_{\rho_{\mathrm{NAT}}}(B\mathbf{T}\times\mathbf{C}_5)$.
We are labeling the vertices corresponding to the representations $\rho_i\times\beta_j$ with $\overset{\textcolor{red}{j}}{(i)}$, the vertices corresponding to the one-dimensional representions are \textcolor{blue}{blue} and the vertices corresponding to the two-dimensional representions are \textcolor{green}{green} and the vertices corresponding to the three-dimensional representions are \textcolor{red}{red}. Note that this quiver is not planar.

\begin{figure}[H]
\begin{center}
\begin{tikzpicture}[>=Stealth,auto=right,every node/.style={circle,fill=blue!20,inner sep=0pt,font=\tiny,draw},twodim/.style={circle,fill=green!20,inner sep=0pt,font=\tiny,draw}]
 \tikzmath{% All math operations should be defined here or in the code with \pgfmathtruncatemacro{}{}, it is not possible perfomr operations inside a foreach for instance
\n=10; %Vakue of $n$
\s=\n-1;
\r=\n/2-1;
}
\begin{scope}%[every node/.style={circle,fill=blue!20,inner sep=0pt,font=\tiny,draw}]
\foreach \i in {0,1,...,\s} {\node (a\i) at ({4.5*cos(1*(360/\n)*\i)},{4.5*sin(1*(360/\n)*\i)}) {$\overset{a}{(\i)}$};};
\foreach \i [evaluate=\i as \j using {int(mod(\i+1,\n))}] in {0,...,\s} \draw[->, thick] (a\i) -- (a\j);
\foreach \i [evaluate=\i as \j using {int(mod(2*\i,\n))},evaluate=\i as \k using {int(mod(2*\i,\n/2))}] in {0,...,\s} \node at (a\j) {$\overset{\textcolor{red}{\k}}{(0)}$};
\foreach \i [evaluate=\i as \j using {int(mod(2*\i+1,\n))},evaluate=\i as \k using {int(mod(2*\i+1,\n/2))}] in {0,...,\s} \node[twodim] at (a\j) {$\overset{\textcolor{red}{\k}}{(3)}$};
\end{scope}
\begin{scope}[every node/.style={circle,fill=red!20,inner sep=0pt,font=\tiny,draw}]
\foreach \i in {0,1,...,\r} {\node (b\i) at ({3.5*cos(1*(360/(\n/2))*\i)},{3.5*sin(1*(360/(\n/2))*\i)}) {$\overset{b}{(\i)}$};};
\foreach \i [evaluate=\i as \j using {int(mod(2*\i,\n/2))}] in {0,...,\r} \node at (b\i) {$\overset{\textcolor{red}{\j}}{(6)}$};
\end{scope}
\begin{scope}%[every node/.style={circle,fill=blue!20,inner sep=0pt,font=\tiny,draw}]
\foreach \i in {0,1,...,\s} {\node (c\i) at ({2.8*cos(1*(360/\n)*\i)},{2.8*sin(1*(360/\n)*\i)}) {$\overset{c}{(\i)}$};};
\foreach \i [evaluate=\i as \j using {int(mod(\i+1,\n))}] in {0,...,\s} \draw[->, thick] (c\i) -- (c\j);
\foreach \i [evaluate=\i as \j using {int(mod(2*\i,\n))},evaluate=\i as \k using {int(mod(2*\i,\n/2))}] in {0,...,\s} \node at (c\j) {$\overset{\textcolor{red}{\k}}{(1)}$};
\foreach \i [evaluate=\i as \j using {int(mod(2*\i+1,\n))},evaluate=\i as \k using {int(mod(2*\i+1,\n/2))}] in {0,...,\s} \node[twodim] at (c\j) {$\overset{\textcolor{red}{\k}}{(4)}$};
\end{scope}
\begin{scope}%[every node/.style={circle,fill=red!20,inner sep=1pt,font=\tiny,draw}]
\foreach \i in {0,1,...,\s} {\node (d\i) at ({2*cos(1*(360/\n)*\i)},{2*sin(1*(360/\n)*\i)}) {$\overset{d}{(\i)}$};};
\foreach \i [evaluate=\i as \j using {int(mod(\i+1,\n))}] in {0,...,\s} \draw[->, thick] (d\i) -- (d\j);
\foreach \i [evaluate=\i as \j using {int(mod(2*\i,\n))},evaluate=\i as \k using {int(mod(2*\i,\n/2))}] in {0,...,\s} \node at (d\j) {$\overset{\textcolor{red}{\k}}{(2)}$};
\foreach \i [evaluate=\i as \j using {int(mod(2*\i+1,\n))},evaluate=\i as \k using {int(mod(2*\i+1,\n/2))}] in {0,...,\s} \node[twodim] at (d\j) {$\overset{\textcolor{red}{\k}}{(5)}$};
\end{scope}
\foreach \i [evaluate=\i as \j using {int(mod(2*\i,\n))}] in {0,...,\r} {\pgfmathtruncatemacro{\k}{\j + 1};\draw [->, blue, thick] (b\i) -- (a\k);\draw [->, blue, thick] (b\i) -- (c\k);\draw [->, blue, thick] (b\i) -- (d\k);};
\foreach \i [evaluate=\i as \j using {int(mod(2*\i+1,\n))},evaluate=\i as \k using {int(mod(\i+1,\n/2))}] in {0,...,\r} {\draw [->, blue, thick] (a\j) -- (b\k);\draw [->, blue, thick] (c\j) -- (b\k);\draw [->, blue, thick] (d\j) -- (b\k);};
%% basic subquiver
\draw [->, red, thick] (a2) -- (a3);
\draw [->, red, thick] (b1) -- (a3);
\draw [->, red, thick] (c2) -- (c3);
\draw [->, red, thick] (b1) -- (c3);
\draw [->, red, thick] (d2) -- (d3);
\draw [->, red, thick] (b1) -- (d3);
\end{tikzpicture}
\end{center}
\caption{McKay quiver $Q_{\rho_{\mathrm{NAT}}}(B\mathbf{T}\times\mathbf{C}_5)$.}\label{fig:MQ.BT.C5}
\end{figure}

\subsubsection{McKay quiver $Q_{\rho_{\mathrm{NAT}}}(P'_{8\cdot 3^2}\times\mathbf{C}_5)$}

By Theorem~\ref{thm:sfs} the group $P'_{8\cdot 3^2}\times\mathbf{C}_5$ is the small tetrahedral group
\begin{equation*}
\mathbb{T}_{15}\cong P'_{8\cdot 3^2}\times\mathbf{C}_5.
\end{equation*}
Remember from Subsection~\ref{irreducible-representations.P} that the group $P'_{8\cdot 3^2}$ has $9$ one-dimensional irreducible representations $\alpha_i$ and $9$ two-dimensional irreducible representations $\varrho_i$ with $0\leq i\leq 8$, and $3$ three-dimensional irreducible representations $\varsigma_{s}$ with $s=0,1,2$.
Hence, $P'_{8\cdot 3^2}\times\mathbf{C}_5$ has $105$ irreducible representations: $45$ one-dimensional representations
\begin{equation*}
\alpha_i\times\beta_j,\quad 0\leq i<9,\ 0\leq j<5,
\end{equation*}
$45$ two-dimensional irreducible representations
\begin{equation*}
\varrho_{i}\times\beta_j,\quad 0\leq i<9,\ 0\leq j<5,
\end{equation*}
and $15$ three-dimensional irreducible representations
\begin{equation*}
\varsigma_{s}\times\beta_j,\quad s=0,1,2,\ 0\leq j<5.
\end{equation*}
Hence, there are $105$ vertices in the McKay quiver $Q_{\rho_{\mathrm{NAT}}}(P'_{8\cdot 3^2}\times\mathbf{C}_5)$.
Figure~\ref{fig:MQ.Pk.Cm} shows the McKay quiver $Q_{\rho_{\mathrm{NAT}}}(P'_{8\cdot 3^2}\times\mathbf{C}_5)$.
We are labeling the \textcolor{blue}{blue} vertices corresponding to the one-dimensional representations $\alpha_i\times\beta_j$ with $\overset{\textcolor{red}{j}}{(i)}$, the \textcolor{green}{green} vertices corresponding to the two-dimensional representions $\varrho_{i}\times\beta_j$ with $\overset{\textcolor{red}{j}}{(i)}$, and the \textcolor{red}{red} vertices corresponding to the three-dimensional representions $\varsigma_{s}\times\beta_j$ with $\overset{\textcolor{red}{j}}{(s)}$. Note that this quiver is not planar.

\begin{figure}[H]
\begin{center}
\begin{tikzpicture}[scale=.9,>=Stealth,auto=right]
\tikzmath{% All math operations should be defined here or in the code with \pgfmathtruncatemacro{}{}, it is not possible perfomr operations inside a foreach for instance
\n=45; %Vakue of $n$
\s=\n-1;
\r=\n/3-1;
\t=\n/3;
}
\begin{scope}[name prefix = 1-,every node/.style={circle,fill=blue!20,inner sep=0pt,font=\tiny,draw}]
\foreach \i [evaluate=\i as \k using {int(mod(\i,9))},evaluate=\i as \j using {int(mod(\i,5))}] in {0,1,...,\s} {\node (a\i) at ({6*cos(-23*(360/\n)*\i)},{6*sin(-23*(360/\n)*\i)}) {$\overset{\textcolor{red}{\j}}{(\k)}$};};
\end{scope}
\begin{scope}[name prefix = 2-,every node/.style={circle,fill=green!20,inner sep=0pt,font=\tiny,draw}]
\foreach \i [evaluate=\i as \k using {int(mod(\i,9))},evaluate=\i as \j using {int(mod(\i,5))}] in {0,1,...,\s} {\node (a\i) at ({4*cos(-23*(360/\n)*\i+(360/2*\n))},{4*sin(-23*(360/\n)*\i+(360/2*\n))}) {$\overset{\textcolor{red}{\j}}{(\k)}$};};
\end{scope}
\begin{scope}[name prefix = 3-,every node/.style={circle,fill=red!20,inner sep=0pt,font=\tiny,draw}]
\foreach \i [evaluate=\i as \k using {int(mod(\i,3))},evaluate=\i as \j using {int(mod(\i,5))}] in {0,1,...,\r} {\node (a\i) at ({3*cos(4*(360/\t)*\i-3*(360/(2*\n)))},{3*sin(4*(360/\t)*\i-3*(360/(2*\n)))}) {$\overset{\textcolor{red}{\j}}{(\k)}$};};
\end{scope}
\foreach \i [evaluate=\i as \k using {int(mod(\i+1,\n))}] in {0,1,...,\s} {\draw [->, thick, blue] (1-a\i) -- (2-a\k);};
\foreach \i [evaluate=\i as \k using {int(mod(\i+1,\n))}] in {0,1,...,\s} {\draw [->, thick] (2-a\i) -- (1-a\k);};
\foreach \i [evaluate=\i as \k using {int(mod(\i+1,\n))},evaluate=\i as \j using {int(mod(\i+1,\r+1))}] in {0,1,...,\s} {\draw [->, thick] (2-a\i) -- (3-a\j);};
\foreach \i [evaluate=\i as \k using {int(mod(\i+1,\n))},evaluate=\i as \j using {int(mod(\i+16,\n))},evaluate=\i as \l using {int(mod(\i+31,\n))}] in {0,1,...,\r} {\draw [->, thick, red] (3-a\i) -- (2-a\k);\draw [->, thick, red] (3-a\i) -- (2-a\j);\draw [->, thick, red] (3-a\i) -- (2-a\l);};
\end{tikzpicture}
\end{center}
\caption{McKay quiver $Q_{\rho_{\mathrm{NAT}}}(P'_{8\cdot 3^2}\times\mathbf{C}_5)$.}\label{fig:MQ.Pk.Cm}
\end{figure}

\begin{remark}\label{rem:MQ.Pk.Cm.RW}
Rename the irreducible representations of the group $P'_{8\cdot 3^k}\times\mathbf{C}_l$ as follows:
For every pair $(i,j)$ with $0\leq i< 3^k$ and $0\leq j< l$,
let $p=i-j\mod l$, that is $0\leq p< l$, and $q=3^kp+i \mod 3^kl$, that is $0\leq q< 3^kl$ and set
\begin{align*}
\tilde{\alpha}_{q}&=\alpha_i\times\beta_j,\quad 0\leq i< 3^k,\ 0\leq j< l,\\
\tilde{\varrho}_{q}&=\varrho_{i}\times\beta_j,\quad 0\leq i< 3^k,\ 0\leq j<l,
\end{align*}
for the one and two-dimensional irreducible representations.
For every pair $(s,j)$ with $0\leq s< 3^{k-1}$ and $0\leq j< l$,
let $p=s-j\mod l$, that is $0\leq p< l$, and $q=3^kp+i \mod 3^{k-1}l$, that is $0\leq q< 3^{k-1}l$ and set
\begin{equation*}
\tilde{\varsigma}_{q}=\varsigma_{s}\times\beta_j,\quad 0\leq s<3^{k-1},\ 0\leq j< l,
\end{equation*}
for the three-dimensional representations. With this renaming the McKay quiver $Q_{\rho_{\mathrm{NAT}}}(P'_{8\cdot 3^k}\times\mathbf{C}_l)$ is given by
\begin{equation}\label{mq:P83k.Cm}
\begin{tikzcd}
\tilde{\alpha}_{q} \arrow[r]  & \tilde{\varrho}_{q+1}\\
\end{tikzcd}\qquad\qquad
\begin{tikzcd}
   & \tilde{\alpha}_{q+1} \\
\tilde{\varrho}_{q}\arrow[rd] \arrow[ru] &  \\
& \tilde{\varsigma}_{q+1}\\
\end{tikzcd}\qquad\qquad
\begin{tikzcd}
   & \tilde{\varrho}_{q+1}\\
\tilde{\varsigma}_{q}\arrow[rd] \arrow[r] \arrow[ru] & \tilde{\varrho}_{3^{k-1}l+q+1} \\
& \tilde{\varrho}_{2\cdot3^{k-1}l+q+1}
\end{tikzcd}
\end{equation}
When $k=2$ and $l=1$ we get the McKay quiver $Q_{\rho_{\mathrm{Nat}}}(P'_{8\cdot 3^2})$ given in Figure~\ref{fig:MQ:P3.2} by \eqref{mq:P83k}. When
$k=2$ and $l=5$ we get the McKay quiver of $Q_{\rho_{\mathrm{NAT}}}(P'_{8\cdot 3^2}\times\mathbf{C}_5)$ given in
Figure~\ref{fig:MQ.Pk.Cm} by Proposition~\ref{prop:MQ.GxCm}.
\end{remark}

\subsubsection{McKay quiver $Q_{\rho_{\mathrm{NAT}}}(B\mathbf{O}\times\mathbf{C}_5)$}\label{sssec:BOC5}

By Theorem~\ref{thm:sfs} the group $B\mathbf{O}\times\mathbf{C}_5$ is the small octahedral group
\begin{equation*}
\mathbb{O}_{5}\cong B\mathbf{O}\times\mathbf{C}_5.
\end{equation*}
The group $B\mathbf{O}$ has $8$ irreducible representations: $2$ one-dimensional representations $\rho_0$ and $\rho_1$; $3$ two-dimensional representations $\rho_2$, $\rho_3$ and $\rho_4$; $2$ three-dimensional representation $\rho_5$ and $\rho_6$, and $1$ four-dimensional representation $\rho_7$. Hence, $B\mathbf{O}\times\mathbf{C}_5$ has $40$ irreducible representations given by
\begin{equation*}
\rho_i\times\beta_j,\quad 0\leq i<8,\ 0\leq j<5.
\end{equation*}
Thus, there are $40$ vertices in the McKay quiver $Q_{\rho_{\mathrm{NAT}}}(B\mathbf{O}\times\mathbf{C}_5)$.
Figure~\ref{fig:MQ.BO.C5} shows the McKay quiver $Q_{\rho_{\mathrm{NAT}}}(B\mathbf{O}\times\mathbf{C}_5)$.
We are labeling the vertices corresponding to the representations $\rho_i\times\beta_j$ with $\overset{\textcolor{red}{j}}{(i)}$, the vertices corresponding to the one-dimensional representions are \textcolor{blue}{blue} and the vertices corresponding to the two-dimensional representions are \textcolor{green}{green}, the vertices corresponding to the three-dimensional representions are \textcolor{red}{red} and the vertices corresponding to the four-dimensional representions are \textcolor{yellow}{yellow}.

\begin{figure}[H]
\begin{center}
\begin{tikzpicture}[scale=.8,>=Stealth,auto=right,every node/.style={circle,fill=blue!20,inner sep=0pt,font=\tiny,draw},twodim/.style={circle,fill=green!20,inner sep=0pt,font=\tiny,draw},threedim/.style={circle,fill=red!20,inner sep=0pt,font=\tiny,draw}]
 \tikzmath{% All math operations should be defined here or in the code with \pgfmathtruncatemacro{}{}, it is not possible perfomr operations inside a foreach for instance
\n=10; %Vakue of $n$
\s=\n-1;
\r=\n/2-1;
}
\begin{scope}%[every node/.style={circle,fill=blue!20,inner sep=0pt,font=\tiny,draw}]
\foreach \i in {0,1,...,\s} {\node (a\i) at ({4.5*cos(1*(360/\n)*\i)},{4.5*sin(1*(360/\n)*\i)}) {$\overset{a}{(\i)}$};};
\foreach \i [evaluate=\i as \j using {int(mod(\i+1,\n))}] in {0,...,\s} \draw[->, thick] (a\i) -- (a\j);
\foreach \i [evaluate=\i as \j using {int(mod(2*\i,\n))},evaluate=\i as \k using {int(mod(2*\i,\n/2))}] in {0,...,\s} \node at (a\j) {$\overset{\textcolor{red}{\k}}{(0)}$};
\foreach \i [evaluate=\i as \j using {int(mod(2*\i+1,\n))},evaluate=\i as \k using {int(mod(2*\i+1,\n/2))}] in {0,...,\s} \node[twodim] at (a\j) {$\overset{\textcolor{red}{\k}}{(2)}$};
\end{scope}
\begin{scope}[fourdim/.style={circle,fill=yellow!20,inner sep=0pt,font=\tiny,draw}]
\foreach \i in {0,1,...,\s} {\node (c\i) at ({3.7*cos(1*(360/\n)*\i)},{3.7*sin(1*(360/\n)*\i)}) {$\overset{c}{(\i)}$};};
\foreach \i [evaluate=\i as \j using {int(mod(\i+1,\n))}] in {0,...,\s} \draw[->, thick] (c\i) -- (c\j);
\foreach \i [evaluate=\i as \j using {int(mod(2*\i,\n))},evaluate=\i as \k using {int(mod(2*\i,\n/2))}] in {0,...,\s} \node[threedim] at (c\j) {$\overset{\textcolor{red}{\k}}{(4)}$};
\foreach \i [evaluate=\i as \j using {int(mod(2*\i+1,\n))},evaluate=\i as \k using {int(mod(2*\i+1,\n/2))}] in {0,...,\s} \node[fourdim] at (c\j) {$\overset{\textcolor{red}{\k}}{(7)}$};
\end{scope}
\begin{scope}%[threedim/.style={circle,fill=red!20,inner sep=0pt,font=\tiny,draw}]
\foreach \i in {0,1,...,\s} {\node (d\i) at ({2*cos(1*(360/\n)*\i)},{2*sin(1*(360/\n)*\i)}) {$\overset{d}{(\i)}$};};
\foreach \i [evaluate=\i as \j using {int(mod(\i+1,\n))}] in {0,...,\s} \draw[->, thick] (d\i) -- (d\j);
\foreach \i [evaluate=\i as \j using {int(mod(2*\i,\n))},evaluate=\i as \k using {int(mod(2*\i,\n/2))}] in {0,...,\s} \node[threedim] at (d\j) {$\overset{\textcolor{red}{\k}}{(6)}$};
\foreach \i [evaluate=\i as \j using {int(mod(2*\i+1,\n))},evaluate=\i as \k using {int(mod(2*\i+1,\n/2))}] in {0,...,\s} \node[twodim] at (d\j) {$\overset{\textcolor{red}{\k}}{(3)}$};
\end{scope}
\begin{scope}%[every node/.style={circle,fill=red!20,inner sep=0pt,font=\tiny,draw}]
\foreach \i in {0,1,...,\r} {\node (b\i) at ({1*cos(1*(360/(\n/2))*\i)},{1*sin(1*(360/(\n/2))*\i)}) {$\overset{b}{(\i)}$};};
\foreach \i [evaluate=\i as \j using {int(mod(2*\i,\n/2))}] in {0,...,\r} \node at (b\i) {$\overset{\textcolor{red}{\j}}{(1)}$};
\end{scope}
\begin{scope}[every node/.style={circle,fill=green!20,inner sep=0pt,font=\tiny,draw}]
\foreach \i in {0,1,...,\r} {\node (e\i) at ({2.9*cos(1*(360/(\n/2))*\i)},{2.9*sin(1*(360/(\n/2))*\i)}) {$\overset{b}{(\i)}$};};
\foreach \i [evaluate=\i as \j using {int(mod(2*\i,\n/2))}] in {0,...,\r} \node at (e\i) {$\overset{\textcolor{red}{\j}}{(5)}$};
\end{scope}
\foreach \i [evaluate=\i as \j using {int(mod(2*\i,\n))}] in {0,...,\r} {\pgfmathtruncatemacro{\k}{\j + 1};\draw [->, thick] (c\j) -- (a\k);\draw [->, thick] (d\j) -- (c\k);};
\foreach \i [evaluate=\i as \j using {int(mod(2*\i+1,\n))},evaluate=\i as \k using {int(mod(\j+1,\n))}] in {0,...,\r} {\draw [->, thick] (a\j) -- (c\k);\draw [->, thick] (c\j) -- (d\k);};
\foreach \i [evaluate=\i as \j using {int(mod(2*\i+1,\n))},evaluate=\i as \k using {int(mod(\i+1,\n/2))}] in {0,...,\r} {\draw [->, thick] (d\j) -- (b\k);};
\foreach \i [evaluate=\i as \j using {int(mod(2*\i+1,\n))},evaluate=\i as \k using {int(mod(\i+1,\n/2))}] in {0,...,\r} {\draw [->, thick] (b\i) -- (d\j);};
\foreach \i [evaluate=\i as \j using {int(mod(2*\i+1,\n))},evaluate=\i as \k using {int(mod(\i+1,\n/2))}] in {0,...,\r} {\draw [->, thick] (e\i) -- (c\j);};
\foreach \i [evaluate=\i as \j using {int(mod(2*\i+1,\n))},evaluate=\i as \k using {int(mod(\i+1,\n/2))}] in {0,...,\r} {\draw [->, thick] (c\j) -- (e\k);};
% Basic subquiver
\draw [->, red, thick] (a1) -- (a2);
\draw [->, red, thick] (a1) -- (c2);
\draw [->, red, thick] (c1) -- (c2);
\draw [->, red, thick] (c1) -- (e1);
\draw [->, red, thick] (c1) -- (d2);
\draw [->, red, thick] (d1) -- (d2);
\draw [->, red, thick] (d1) -- (b1);
\end{tikzpicture}
\end{center}
\caption{McKay quiver $Q_{\rho_{\mathrm{NAT}}}(B\mathbf{O}\times\mathbf{C}_5)$.}\label{fig:MQ.BO.C5}
\end{figure}

\subsubsection{McKay quiver $Q_{\rho_{\mathrm{NAT}}}(B\mathbf{I}\times\mathbf{C}_7)$}\label{sssec:BIC7}

By Theorem~\ref{thm:sfs} the group $B\mathbf{I}\times\mathbf{C}_7$ is the small icosahedral group
\begin{equation*}
\mathbb{I}_{7}\cong B\mathbf{I}\times\mathbf{C}_7.
\end{equation*}
The group $B\mathbf{I}$ has $9$ irreducible representations: $1$ one-dimensional representations $\rho_0$; $2$ two-dimensional representations $\rho_1$ and $\rho_2$; $2$ three-dimensional representation $\rho_3$ and $\rho_4$; $2$ four-dimensional representation $\rho_5$ and $\rho_6$; $1$ five-dimensional representation $\rho_7$ and $1$ six-dimensional representation $\rho_8$. Hence, $B\mathbf{I}\times\mathbf{C}_7$ has $63$ irreducible representations given by
\begin{equation*}
\rho_i\times\beta_j,\quad 0\leq i<9,\ 0\leq j<7.
\end{equation*}
Thus, there are $63$ vertices in the McKay quiver $Q_{\rho_{\mathrm{NAT}}}(B\mathbf{I}\times\mathbf{C}_7)$.
Figure~\ref{fig:MQ.BI.C7} shows the McKay quiver $Q_{\rho_{\mathrm{NAT}}}(B\mathbf{I}\times\mathbf{C}_7)$.
We are labeling the vertices corresponding to the representations $\rho_i\times\beta_j$ with $\overset{\textcolor{red}{j}}{(i)}$, the vertices corresponding to the one-dimensional representions are \textcolor{blue}{blue} and the vertices corresponding to the two-dimensional representions are \textcolor{green}{green}, the vertices corresponding to the three-dimensional representions are \textcolor{red}{red}, the vertices corresponding to the four-dimensional representions are \textcolor{yellow}{yellow}, the vertices corresponding to the five-dimen\-sio\-nal representions are \textcolor{cyan}{cyan} and the vertices corresponding to the six-dimensional representions are \textcolor{darkgray}{gray}.

\begin{figure}[H]
\begin{center}
\begin{tikzpicture}[scale=.9,>=Stealth,auto=right,every node/.style={circle,fill=blue!20,inner sep=0pt,font=\tiny,draw},twodim/.style={circle,fill=green!20,inner sep=0pt,font=\tiny,draw},threedim/.style={circle,fill=red!20,inner sep=0pt,font=\tiny,draw},fourdim/.style={circle,fill=yellow!20,inner sep=0pt,font=\tiny,draw},fivedim/.style={circle,fill=cyan!20,inner sep=0pt,font=\tiny,draw},sixdim/.style={circle,fill=darkgray!20,inner sep=0pt,font=\tiny,draw}]
 \tikzmath{% All math operations should be defined here or in the code with \pgfmathtruncatemacro{}{}, it is not possible perfomr operations inside a foreach for instance
\n=14; %Vakue of $n$
\s=\n-1;
\r=\n/2-1;
}
\begin{scope}
\foreach \i in {0,1,...,\s} {\node (a\i) at ({5*cos(1*(360/\n)*\i)},{5*sin(1*(360/\n)*\i)}) {$\overset{a}{(1)}$};};
\foreach \i [evaluate=\i as \j using {int(mod(\i+1,\n))}] in {0,...,\s} \draw[->, thick] (a\i) -- (a\j);
\foreach \i [evaluate=\i as \j using {int(mod(2*\i,\n))},evaluate=\i as \k using {int(mod(2*\i,\n/2))}] in {0,...,\s} \node at (a\j) {$\overset{\textcolor{red}{\k}}{(0)}$};
\foreach \i [evaluate=\i as \j using {int(mod(2*\i+1,\n))},evaluate=\i as \k using {int(mod(2*\i+1,\n/2))}] in {0,...,\s} \node[twodim] at (a\j) {$\overset{\textcolor{red}{\k}}{(1)}$};
\end{scope}
\begin{scope}
\foreach \i in {0,1,...,\s} {\node (b\i) at ({4*cos(1*(360/\n)*\i)},{4*sin(1*(360/\n)*\i)}) {$\overset{c}{(1)}$};};
\foreach \i [evaluate=\i as \j using {int(mod(\i+1,\n))}] in {0,...,\s} \draw[->, thick] (b\i) -- (b\j);
\foreach \i [evaluate=\i as \j using {int(mod(2*\i,\n))},evaluate=\i as \k using {int(mod(2*\i,\n/2))}] in {0,...,\s} \node[threedim] at (b\j) {$\overset{\textcolor{red}{\k}}{(4)}$};
\foreach \i [evaluate=\i as \j using {int(mod(2*\i+1,\n))},evaluate=\i as \k using {int(mod(2*\i+1,\n/2))}] in {0,...,\s} \node[fourdim] at (b\j) {$\overset{\textcolor{red}{\k}}{(6)}$};
\end{scope}
\begin{scope}
\foreach \i in {0,1,...,\s} {\node (c\i) at ({3.3*cos(1*(360/\n)*\i)},{3.3*sin(1*(360/\n)*\i)}) {$\overset{d}{(1)}$};};
\foreach \i [evaluate=\i as \j using {int(mod(\i+1,\n))}] in {0,...,\s} \draw[->, thick] (c\i) -- (c\j);
\foreach \i [evaluate=\i as \j using {int(mod(2*\i,\n))},evaluate=\i as \k using {int(mod(2*\i,\n/2))}] in {0,...,\s} \node[fivedim] at (c\j) {$\overset{\textcolor{red}{\k}}{(7)}$};
\foreach \i [evaluate=\i as \j using {int(mod(2*\i+1,\n))},evaluate=\i as \k using {int(mod(2*\i+1,\n/2))}] in {0,...,\s} \node[sixdim] at (c\j) {$\overset{\textcolor{red}{\k}}{(8)}$};
\end{scope}
\begin{scope}
\foreach \i in {0,1,...,\s} {\node (d\i) at ({2*cos(1*(360/\n)*\i)},{2*sin(1*(360/\n)*\i)}) {$\overset{d}{(1)}$};};
\foreach \i [evaluate=\i as \j using {int(mod(\i+1,\n))}] in {0,...,\s} \draw[->, thick] (d\i) -- (d\j);
\foreach \i [evaluate=\i as \j using {int(mod(2*\i,\n))},evaluate=\i as \k using {int(mod(2*\i,\n/2))}] in {0,...,\s} \node[fourdim] at (d\j) {$\overset{\textcolor{red}{\k}}{(5)}$};
\foreach \i [evaluate=\i as \j using {int(mod(2*\i+1,\n))},evaluate=\i as \k using {int(mod(2*\i+1,\n/2))}] in {0,...,\s} \node[twodim] at (d\j) {$\overset{\textcolor{red}{\k}}{(2)}$};
\end{scope}
\begin{scope}%[every node/.style={circle,fill=green!20,inner sep=0pt,font=\tiny,draw}]
\foreach \i in {0,1,...,\r} {\node (e\i) at ({2.7*cos(1*(360/(\n/2))*\i)},{2.7*sin(1*(360/(\n/2))*\i)}) {$\overset{b}{(\i)}$};};
\foreach \i [evaluate=\i as \j using {int(mod(2*\i,\n/2))}] in {0,...,\r} \node[threedim] at (e\i) {$\overset{\textcolor{red}{\j}}{(3)}$};
\end{scope}
\foreach \i [evaluate=\i as \j using {int(mod(2*\i,\n))}] in {0,...,\r} {\pgfmathtruncatemacro{\k}{\j + 1};\draw [->, thick] (b\j) -- (a\k);\draw [->, thick] (c\j) -- (b\k);\draw [->, thick] (d\j) -- (c\k);};
\foreach \i [evaluate=\i as \j using {int(mod(2*\i+1,\n))},evaluate=\i as \k using {int(mod(\j+1,\n))}] in {0,...,\r} {\draw [->, thick] (a\j) -- (b\k);\draw [->, thick] (b\j) -- (c\k);\draw [->, thick] (c\j) -- (d\k);};
\foreach \i [evaluate=\i as \j using {int(mod(2*\i+1,\n))},evaluate=\i as \k using {int(mod(\i+1,\n/2))}] in {0,...,\r} {\draw [->, thick] (e\i) -- (c\j);};
\foreach \i [evaluate=\i as \j using {int(mod(2*\i+1,\n))},evaluate=\i as \k using {int(mod(\i+1,\n/2))}] in {0,...,\r} {\draw [->, thick] (c\j) -- (e\k);};
% Basic subquiver
\draw [->, red, thick] (a1) -- (a2);
\draw [->, red, thick] (a1) -- (b2);
\draw [->, red, thick] (b1) -- (b2);
\draw [->, red, thick] (b1) -- (c2);
\draw [->, red, thick] (c1) -- (c2);
\draw [->, red, thick] (c1) -- (e1);
\draw [->, red, thick] (c1) -- (d2);
\draw [->, red, thick] (d1) -- (d2);
\end{tikzpicture}
\end{center}
\caption{McKay quiver $Q_{\rho_{\mathrm{NAT}}}(B\mathbf{I}\times\mathbf{C}_7)$.}\label{fig:MQ.BI.C7}
\end{figure}

\section{Comparison with the results of Auslander and Reiten in \cite{Auslander-Reiten:MQEDD}}\label{sec:AR}

As we mentioned in the introduction, the McKay quivers of small finite subgroups of $GL(2,\mathbb{C})$ were computed by Auslander and Reiten in \cite[Proposition~7]{Auslander-Reiten:MQEDD}.
In this section we make some remarks in order to compare our results with those of \cite{Auslander-Reiten:MQEDD} and to check that they are indeed the same.

In \cite[\S2]{Auslander-Reiten:MQEDD} the following constructions of quivers are introduced. For a tree $T$ and a positive integer $s$, the quivers $(T,s)$ and $[T,s]$ are defined as follows.
In both cases the vertices are $\{(v,i)\mid v\in T;\, 0\leq i<s\}$. For each edge \dynkin[labels={v,w}]{A}{2} in $T$ we have arrows $(v,i+1)\to(w,i)$ and $(w,i+1)\to(v,i)$ in $(T,s)$, with addition modulo $s$. Assigning $+$ or $-$ to the vertices of $T$ such that neighbors have opposite sign, we have for each \dynkin[labels={v,w},labels*={-,+}]{A}{2} in $T$, arrows $(v,1)\to(w,i)$ and $(w,i+1)\to(v,i)$ in $[T,s]$.

\begin{remark}\label{rem:copy.i}
If in the quiver $[T,s]$ we fix $0\leq i<s$ and for each \dynkin[labels={v,w},labels*={-,+}]{A}{2} in $T$ we consider the arrows $(v,i)\to(w,i)$, we obtain a subquiver of $[T,s]$ which is an oriented copy of the tree $T$, with the orientation of the arrows depending on the choice of signs given to the vertices of $T$. We denote this subquiver by $T^i$ and call it the \textit{$i$-th basic subquiver of $[T,s]$}. The arrows $(w,i+1)\to(v,i)$ in $[T,s]$ connect the $i+1$-th basic subquiver $T^{i+1}$ with the $i$-th basic subquiver $T^i$ of $[T,s]$.
\end{remark}

\begin{lemma}[{\cite[Lemma~6]{Auslander-Reiten:MQEDD}}]\label{lem:T.s.prop}
The quivers $(T,s)$ and $[T,s]$ have the following properties:
\begin{enumerate}[(a)]
 \item $(T,2m)$ is the disjoint union of two copies of $[T,m]$.
 \item If $T$ is connected, then $[T,m]$ is connected.
 \item If $m$ is odd, then $(T,m)$ and $[T,m]$ are isomorphic.\label{it:modd.iso}
\end{enumerate}
\end{lemma}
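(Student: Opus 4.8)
The plan is to treat the three parts separately, exploiting throughout that a tree $T$ is bipartite: fix once and for all a sign function $\epsilon\colon T\to\{0,1\}$ with $\epsilon(v)+\epsilon(w)\equiv 1\pmod 2$ on every edge, where $\epsilon(v)=1$ marks the ``$-$'' vertices and $\epsilon(v)=0$ the ``$+$'' vertices. With this convention an edge $v^{-}\!-\!w^{+}$ contributes to $(T,s)$ the arrows $(v,i+1)\to(w,i)$ and $(w,i+1)\to(v,i)$, and to $[T,s]$ the arrows $(v,i)\to(w,i)$ and $(w,i+1)\to(v,i)$, all indices taken modulo $s$. The key observation, which drives everything, is that the two constructions share the ``$+\to-$'' arrows $(w,i+1)\to(v,i)$ and differ only in the ``$-\to+$'' arrows, where $(T,s)$ raises the source by exactly one level.

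For part (a) I would introduce the invariant $P(v,i):=(i+\epsilon(v))\bmod 2$ on the vertices of $(T,2m)$. A direct check on the two arrow types shows that $P$ takes the same value at source and target of every arrow, using $\epsilon(w)\equiv\epsilon(v)+1$; hence $(T,2m)$ is the disjoint union of the full subquivers on $P^{-1}(0)$ and on $P^{-1}(1)$. To identify $P^{-1}(0)$ with $[T,m]$, note that for $(v,i)$ lying in it we may write $i=2j+\epsilon(v)$ uniquely with $0\le j<m$, and set $\phi(v,i)=(v,j)$. A short verification then translates the two arrow types of $(T,2m)$ surviving in $P^{-1}(0)$ exactly into the two arrow types of $[T,m]$, with the wrap-around mod $2m$ matching the wrap-around mod $m$. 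Finally, the level shift $(v,i)\mapsto(v,i+1)$ is an automorphism of $(T,2m)$ that flips $P$ and so exchanges $P^{-1}(0)$ with $P^{-1}(1)$, giving a second copy of $[T,m]$.

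Part (b) is the conceptual one. By Remark~\ref{rem:copy.i} each level $i$ carries the basic subquiver $T^{i}\cong T$, which is connected because $T$ is connected; thus all vertices $(v,i)$ with $i$ fixed lie in a single component of the underlying graph. Assuming $T$ has at least one edge $v^{-}\!-\!w^{+}$ (the one-vertex case being degenerate), the arrow $(w,i+1)\to(v,i)$ joins level $i+1$ to level $i$; letting $i$ run cyclically over $0,\dots,m-1$ links all $m$ levels, so $[T,m]$ is connected.

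For part (c) the idea is to relabel levels by an affine map that absorbs the one-level discrepancy identified above, namely $\psi(v,i)=(v,\,2i+\epsilon(v)\bmod m)$, acting as the identity on the tree coordinate. Demanding that $\psi$ send the two arrow types of $[T,m]$ to those of $(T,m)$ forces, on each edge, the relations $\phi_v(i)-\phi_w(i)=1$ and $\phi_w(i+1)-\phi_v(i)=1$; one checks $\phi_v(i)=2i+\epsilon(v)$ satisfies both, consistently across all edges meeting a vertex since the constraints depend only on signs. The crucial point, and the only place oddness of $m$ is used, is that $i\mapsto 2i+\epsilon(v)$ is a bijection of $\mathbb{Z}/m$ precisely when $2$ is invertible mod $m$, i.e.\ when $m$ is odd; this makes $\psi$ a vertex bijection, and the arrow correspondence upgrades it to a quiver isomorphism. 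I expect the \emph{main obstacle} to be purely bookkeeping: keeping the sign convention, the two arrow families, and the three different moduli ($2m$, $m$, $m$) aligned so that the level wrap-arounds match rather than end up off by one.
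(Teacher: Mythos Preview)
Your argument is correct in all three parts. Note, however, that the paper does not actually supply a proof of this lemma: it is quoted verbatim from \cite[Lemma~6]{Auslander-Reiten:MQEDD} and used as a black box in Section~\ref{sec:AR}, so there is no ``paper's own proof'' to compare against. What you have written is a clean self-contained proof that could stand in for the citation.

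A couple of minor remarks. In part~(b) your parenthetical about the one-vertex tree is worth keeping: as stated, the lemma is literally false for a single vertex with $m>1$, and the implicit hypothesis (here and in \cite{Auslander-Reiten:MQEDD}) is that $T$ has at least one edge, which is always the case for the extended Dynkin diagrams the paper cares about. In part~(c), once you have checked that $\psi$ carries each arrow of $[T,m]$ to an arrow of $(T,m)$, you should also remark (as you essentially do) that both quivers have exactly $2m$ arrows per edge of $T$, so the injective arrow map is automatically bijective; this closes the argument that $\psi$ is a quiver isomorphism and not merely a morphism.
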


\begin{remark}
Recall from Subsection~\ref{sssec:Cn} that $\mathbf{C}_n$ has $n$ one-dimensional representations $\beta_j$, $0\leq j< n$. % with characters denoted by $\chi_k$.
Rename these representations by setting
\begin{equation}\label{eq:rename}
 v_l=\beta_{n-l\pmod n}.
\end{equation}
With this renaming, the McKay quiver $Q_{\rho_{\mathrm{Nat}}}(\mathbb{C}_{n,q})$ given in Subsection~\ref{ssec:MQ.Cnq} has vertices $v_0,\dots,v_{n-1}$ and arrows $v_l\to v_{l-1}$ and $v_l\to v_{l-q}$ with addition modulo $n$, since by \eqref{eq:MQ.Cyclic} we have
\begin{equation*}
v_l=\beta_{n-l}\to\beta_{n-l+1}=\beta_{n-(l-1)}=v_{l-1}\ \text{and}\ v_l=\beta_{n-l}\to\beta_{n-l+q}=\beta_{n-(l-q)}=v_{l-q}.
\end{equation*}
This corresponds to the McKay quiver for $\mathbb{C}_{n,q}$ given in \cite[Proposition~7-(a)]{Auslander-Reiten:MQEDD}.
\end{remark}

\begin{remark}\label{rem:T.m}
With renaming \eqref{eq:rename}, the McKay quiver $Q_{\beta_{1}}(\mathbf{C}_m)$ becomes the McKay quiver $Q_{v_{m-1}}(\mathbf{C}_m)$ and from \eqref{eq:Qb1.Cm} it is given by one arrow going out from the vertex $v_l$ to the vertex $v_{l-1}$:
\begin{equation*}
v_{l}=\beta_{m-l}\rightarrow \beta_{(m-l)+1}=\beta_{m-(l-1)}=v_{l-1\pmod m}.%,\qquad\text{with addition modulo $m$.}
\end{equation*}
Denote by $\chi_j$ the character of $v_l$. Using the McKay quiver $Q_{v_{m-1}}(\mathbf{C}_m)$ to compute the McKay quivers $Q_{\rho_{\mathrm{NAT}}}(\Gamma\times\mathbf{C}_m)$, where
$\Gamma=\mathrm{B}\mathbf{D}_{2q},\mathrm{B}\mathbf{T},\mathrm{B}\mathbf{O},\mathrm{B}\mathbf{I}$, as explained in Subsection~\ref{ssec:MQ.GxCm}, one can see that the McKay quiver $Q_{\rho_{\mathrm{NAT}}}(\Gamma\times\mathbf{C}_m)$ given in Proposition~\ref{prop:MQ.GxCm} is precisely the quiver $(T,m)$ where $T=Q_{\rho_{\mathrm{Nat}}}(\Gamma)$ are the trees given
by the extended Dynkin diagrams of type $\tilde{A}$, $\tilde{D}$, $\tilde{E}$ presented in Table~\ref{tab:MK.graph}.
\end{remark}

\begin{remark}
In Theorem~\ref{thm:sfs} the dihedral groups $\mathbb{D}_{n,q}$ with $\gcd(n,q)=1$ and $m=n-q$ odd; the tetrahedral groups $\mathbb{T}_m$ with $\gcd(m,6)=1$; the octahedral groups $\mathbb{O}_m$ with $\gcd(m,6)=1$ and the icosahedral groups $\mathbb{I}_m$ with $\gcd(m,30)=1$, in the notation given in Subsection~\ref{ssec:fs.U2}, are of the form
\begin{equation*}%\label{eq:BP.Cm}
(\mathbf{C}_{2m},\mathbf{C}_{2m};\Gamma,\Gamma)_\phi,\qquad\text{with $\Gamma=\mathrm{B}\mathbf{D}_{2q},\mathrm{B}\mathbf{T},\mathrm{B}\mathbf{O},\mathrm{B}\mathbf{I}$ respectively.}
\end{equation*}
That is, we have $L=L_k$, $R=R_K$ and $\phi\colon L/L_K\to R/R_K$ is the isomorphism between trivial groups. Hence, the subgroup $H$ of $\mathbb{S}^1\times SU(2)$ is
$H=\mathbf{C}_{2m}\times\Gamma$ and the corresponding subgroup $G_H=\Phi(H)$ of $U(2)$ under the $2:1$ homomorphism \eqref{eq:PHI} is
\begin{equation*}
G_H=\mathbf{C}_{2m}\Gamma=\{\lambda\gamma\mid \lambda\in\mathbf{C}_{2m}, \gamma\in\Gamma\}.
\end{equation*}
By \cite[Proposition~7-(c)]{Auslander-Reiten:MQEDD} the McKay quiver $Q_{\rho_{\mathrm{Nat}}}(G_H)$ of the group $G_H$ with respect to the natural representation given by the inclusion $G_H\hookrightarrow U(2)$ is $[T,m]$ where $T$ is the extended Dynkin diagram corresponding to the McKay quiver $Q_{\rho_{\mathrm{Nat}}}(\Gamma)$.
On the other hand, by Theorem~\ref{thm:sfs} (see \cite[p.~98]{Coxeter:RCP}) we have that
\begin{equation*}
G_H=(\mathbf{C}_{2m},\mathbf{C}_{2m};\Gamma,\Gamma)_\phi\cong\Gamma\times\mathbf{C}_m,
\end{equation*}
where by the conditions on $m$ we have that $m$ is odd. By Proposition~\ref{prop:MQ.GxCm} (and Remark~\ref{rem:T.m}) the McKay quiver $Q_{\rho_{\mathrm{NAT}}}(\Gamma\times\mathbf{C}_m)$
is the quiver $(T,m)$. Since $m$ is odd, by Lemma~\ref{lem:T.s.prop}-\eqref{it:modd.iso} we have that $(T,m)$ and $[T,m]$ are isomorphic.

In the examples of the McKay quivers of the group $\mathbb{D}_{7,4}$ in Figure~\ref{fig:MQ.D2q.Cm}, the group $\mathbb{T}_5$ in Figure~\ref{fig:MQ.BT.C5}, the group $\mathbb{O}_5$ in Figure~\ref{fig:MQ.BO.C5}, and the group $\mathbb{I}_7$ in Figure~\ref{fig:MQ.BI.C7}, the red arrows show a basic subquiver $T^0$, the other successive basic subquivers are obtained by clockwise rotation.
\end{remark}

\begin{remark}
In Theorem~\ref{thm:sfs} the dihedral groups $\mathbb{D}_{n,q}$ with $\gcd(n,q)=1$, $m=n-q$ and $\gcd(m,2)=2$ are of the form
\begin{equation*}
(\mathbf{C}_{4m},\mathbf{C}_{2m};\mathrm{B}\mathbf{D}_{2q},\mathbf{C}_{2q})_\phi,%\cong D_{2^{k+1}\cdot q}\times\mathbf{C}_l\cong\langle\psi_{2q},\tau\phi_{4m}\rangle
\end{equation*}
that is, we have $L=\mathbf{C}_{4m}$, $L_K=\mathbf{C}_{2m}$, $R=\mathrm{B}\mathbf{D}_{2q}$, $R_K=\mathbf{C}_{2q}$ and $\phi\colon \mathbf{C}_{4m}/\mathbf{C}_{2m}\to \mathrm{B}\mathbf{D}_{2q}/\mathbf{C}_{2q}$ is an
isomorphism between cyclic groups of order two. Hence, the subgroup $H$ of $\mathbb{S}^1\times SU(2)$ is
$H=\{(\lambda,\gamma)\in \mathbf{C}_{4m}\times\mathrm{B}\mathbf{D}_{2q}\mid \phi(\lambda+\mathbf{C}_{2m})=(\gamma+\mathbf{C}_{2q})\}$ and the corresponding subgroup $\mathbb{D}_{n,q}=\Phi(H)$ of $U(2)$ under the $2:1$ homomorphism \eqref{eq:PHI} is
\begin{equation*}
\mathbb{D}_{n,q}=\Phi(H)=\{\lambda\gamma\mid (\lambda,\gamma)\in H\}.
\end{equation*}
By \cite[Proposition~7-(e)]{Auslander-Reiten:MQEDD} the McKay quiver $Q_{\rho_{\mathrm{Nat}}}(\mathbb{D}_{n,q})$ of the group $\mathbb{D}_{n,q}$ relative to the natural representation is $[T,m]$ where $T$ is the extended Dynkin diagram corresponding to the McKay quiver $Q_{\rho_{\mathrm{Nat}}}(\mathrm{B}\mathbf{D}_{2q})$ given in Table~\ref{tab:MK.graph}.
On the other hand, by Theorem~\ref{thm:sfs} (see \cite[Theorem~2.7]{Arciniega-etal:CRMQSAPS}) we have that
\begin{equation*}
\mathbb{D}_{n,q}=(\mathbf{C}_{4m},\mathbf{C}_{2m};\mathrm{B}\mathbf{D}_{2q},\mathbf{C}_{2q})_\phi\cong D_{2^{k}\cdot q}\times\mathbf{C}_l,\ \text{with $m=2^{k-2}l$, $l$ odd and $k\geq3$}.
\end{equation*}
To see that the quiver $Q_{\rho_{\mathrm{NAT}}}(D_{2^{k+1}\cdot q}\times\mathbf{C}_l)$ is indeed $[T,s]$ one can find a basic subquiver as follows. The tree $T$ is given by the extended Dynkin diagram
$\tilde{D}_{q+2}$, where the vertices at the ends correspond to the one-dimensional representations and the other ones to the two-dimensional irreducible representations. Choose alternating signs for its vertices, for instance
\begin{equation*}
\dynkin[scale=1.5,extended,labels={-,-,+,-,\pm,\mp,\pm,\pm}]D{}
\end{equation*}
From Subsection~\ref{sssec:MQ.Dnq} when $l=1$ and also from Proposition~\ref{prop:MQ.GxCm} when $l>1$, there are two-dimensional irreducible representations to which arrive two arrows coming from one-dimensional representations. Choose one of such two-dimensional representations and denote it by $\rho_1$ and denote by $\alpha_0$ and $\alpha_1$ the two one-dimensional representations that have arrows to $\rho_1$. There is only one two-dimensional representation $\rho_2$ with an arrow to $\rho_1$, and $\rho_2$ has also another arrow to a two-dimensional representation $\rho_3$. In turn, there is only one two-dimensional representation $\rho_4$ with an arrow to $\rho_3$, and it has also another arrow to a two-dimensional representation $\rho_5$. We follow this path of arrows between two-dimensional representations until we arrive to a two-dimensional representation $\rho_{q-1}$ which has arrows from or to two one-dimensional representations, which we denote by $\alpha_2$ and $\alpha_3$. The vertices $\alpha_0$, $\alpha_1$, $\alpha_2$, $\alpha_3$, and $\rho_1$ to $\rho_{q-1}$ and the arrows which connect them form the $0$-th basic subquiver of $Q_{\rho_{\mathrm{NAT}}}(D_{2^{k+1}\cdot q}\times\mathbf{C}_l)$, the other $m-2$ successive basic subquivers are obtained by clockwise rotation. Finding all the basic subquivers, it is easy to see that the remaining arrows which connect them are the arrows that define the quiver $[T,m]$.

In the examples of the McKay quivers of the group $D_{4(3)}$ in Figure~\ref{fig:MQ:D4.1} and of the group $D_{8(3)}\times\mathbf{C}_5$ in Figure~\ref{fig:MQ.D2k2r.Cm}, the red arrows show a basic subquiver $T^0$, where $T$ is the extended Dynkin diagram $\tilde{D}_{5}$.
In the example of the McKay quiver of the group $D_{4(5)}$ given in Figure~\ref{fig:MQ:D4.2} the red arrows show a basic subquiver $T^0$, where $T$ is the extended Dynkin diagram $\tilde{D}_{7}$.
\end{remark}

\begin{remark}
In Theorem~\ref{thm:sfs} the tetrahedral groups $\mathbb{T}_m$ with $\gcd(m,6)=3$ are of the form
\begin{equation*}
(\mathbf{C}_{6m},\mathbf{C}_{2m};\mathrm{B}\mathbf{T},\mathrm{B}\mathbf{D}_{2})_\phi,
\end{equation*}
that is, we have $L=\mathbf{C}_{6m}$, $L_K=\mathbf{C}_{2m}$, $R=\mathrm{B}\mathbf{T}$, $R_K=\mathrm{B}\mathbf{D}_{2}$ and $\phi\colon \mathbf{C}_{6m}/\mathbf{C}_{2m}\to \mathrm{B}\mathbf{T}/\mathrm{B}\mathbf{D}_{2}$ is an
isomorphism between cyclic groups of order three. Hence, the subgroup $H$ of $\mathbb{S}^1\times SU(2)$ is
$H=\{(\lambda,\gamma)\in \mathbf{C}_{6m}\times\mathrm{B}\mathbf{T}\mid \phi(\lambda+\mathbf{C}_{2m})=(\gamma+\mathrm{B}\mathbf{D}_{2})\}$ and the corresponding subgroup $\mathbb{T}_m=\Phi(H)$ of $U(2)$ under the $2:1$ homomorphism \eqref{eq:PHI} is
\begin{equation*}
 \mathbb{T}_m=\Phi(H)=\{\lambda\gamma\mid (\lambda,\gamma)\in H\}.
\end{equation*}
By \cite[Proposition~7-(g)]{Auslander-Reiten:MQEDD} the McKay quiver $Q_{\rho_{\mathrm{Nat}}}(\mathbb{T}_m)$ of the group $\mathbb{T}_m$ with respect to the natural representation given by the inclusion $\mathbb{T}_m\hookrightarrow U(2)$ is given by the quiver defined as follows: The vertices are
\begin{equation*}
\{(u,i)\mid 0\leq i<m\},\quad \{(v,i)\mid 0\leq i<3m\},\quad  \{(w,i)\mid 0\leq i<3m\},
\end{equation*}
and the arrows are given by
\begin{equation}\label{eq:AR.MQ.Tm}
\begin{tikzcd}
(w,i) \arrow[r]  & (v,i)\\
\end{tikzcd}\quad
\begin{tikzcd}
   & (w,i-1) \\
(v,i)\arrow[rd] \arrow[ru] &  \\
& (u,i)\\
\end{tikzcd}\quad
\begin{tikzcd}
   & (v,i-1)\\
(u,i)\arrow[rd] \arrow[r] \arrow[ru] & (v,m+i-1) \\
& (v,2m+i-1)
\end{tikzcd}
\end{equation}
where addition is modulo $m$ for $(u,i)$ and modulo $3m$ for $(v,i)$ and $(w,i)$.
On the other hand, by Theorem~\ref{thm:sfs} (see \cite[Theorem~2.7]{Arciniega-etal:CRMQSAPS}) we have that
\begin{equation*}
\mathbb{T}_m=(\mathbf{C}_{6m},\mathbf{C}_{2m};\mathrm{B}\mathbf{T},\mathrm{B}\mathbf{D}_{2})_\phi\cong P'_{8\cdot 3^k}\times\mathbf{C}_l,\quad\text{with $m=3^{k-1}l$}.
\end{equation*}
The McKay quiver $Q_{\rho_{\mathrm{NAT}}}(P'_{8\cdot 3^k}\times\mathbf{C}_l)$ is given in \eqref{mq:P83k.Cm}.
In order to prove that these two McKay quivers are isomorphic we need the following lemma.
\begin{lemma}\label{lem:cong}
Let $m\in\mathbb{N}$ with $\gcd(m,6)=3$, that is, $m=3r$ with $r$ odd. Then
\begin{equation*}
\frac{m(m-1)}{2}\equiv m \mod 3m.
\end{equation*}
\end{lemma}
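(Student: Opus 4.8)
The plan is to reduce the stated congruence to a single elementary divisibility fact about $m$. First I would record the consequence of the hypothesis $\gcd(m,6)=3$ that is used throughout: since $3\mid m$ but $2\nmid m$, the integer $m$ is odd, so $m-1$ is even and $\frac{m(m-1)}{2}=m\cdot\frac{m-1}{2}$ is genuinely an integer. This is what makes the statement well posed, and it is essentially the only place the oddness of $m$ enters.

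Next I would rewrite the target congruence $\frac{m(m-1)}{2}\equiv m\pmod{3m}$ in the equivalent form
\[
m\Bigl(\frac{m-1}{2}-1\Bigr)=m\cdot\frac{m-3}{2}\equiv 0\pmod{3m}.
\]
Since the left-hand side is $m$ times the integer $\frac{m-3}{2}$ and the modulus factors as $3m=m\cdot 3$, this congruence holds if and only if $3\mid\frac{m-3}{2}$. Thus the whole problem collapses to verifying this last divisibility.

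Finally I would substitute $m=3r$ with $r$ odd, as provided by the hypothesis. Then $\frac{m-3}{2}=\frac{3r-3}{2}=\frac{3(r-1)}{2}=3\cdot\frac{r-1}{2}$, where $\frac{r-1}{2}$ is an integer precisely because $r$ is odd; hence $\frac{m-3}{2}$ is $3$ times an integer and is divisible by $3$. This establishes the reduced claim and therefore the lemma.

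The computation is short and I do not anticipate a genuine obstacle; the only points requiring care are the bookkeeping of integrality and the reduction of the modulus, namely justifying that cancelling the common factor $m$ turns a congruence modulo $3m$ into a congruence modulo $3$, and keeping track of which halvings produce integers. Both are guaranteed by the oddness of $m$ (equivalently of $r$), so the hypothesis $\gcd(m,6)=3$ is used exactly to keep every quantity an integer.
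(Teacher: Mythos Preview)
Your proof is correct and follows essentially the same approach as the paper: both arguments reduce to the single elementary fact that $r-1$ is even when $r$ is odd, and then perform a short chain of divisibility manipulations. The paper builds the congruence forward from $9r(r-1)\equiv 0\pmod{18r}$ and then substitutes $m=3r$, whereas you first cancel the common factor $m$ to reduce to $3\mid\frac{m-3}{2}$ and then verify this directly; the underlying content is identical.
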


\begin{proof}
Since $r$ is odd we have $(r-1)\equiv 0 \mod 2$, thus
\begin{align*}
9r^2-9r&\equiv 0 \mod 18r,\\
9r^2-3r&\equiv 6r \mod 18r,\\
3r(3r-1)&\equiv 2(3r) \mod 6(3r),\\
m(m-1)&\equiv 2m \mod 6m\\
\frac{m(m-1)}{2}&\equiv m \mod 3m.\qedhere
\end{align*}
\end{proof}

\begin{proposition}
Let $m\in\mathbb{N}$ with $\gcd(m,6)=3$ and $m=3^{k-1}l$ with $l$ odd. The correspondence
\begin{equation*}
\tilde{\alpha}_q\mapsto (w,\frac{3m-1}{2}q),\quad \tilde{\varrho}_q\mapsto (v,\frac{3m-1}{2}q+\frac{3m+1}{2}),\quad \tilde{\varsigma}_q\mapsto (u,\frac{m-1}{2}q+1),
\end{equation*}
gives an isomorphism between the McKay quiver $Q_{\rho_{\mathrm{NAT}}}(P'_{8\cdot 3^k}\times\mathbf{C}_l)$ given by \eqref{mq:P83k.Cm} and the McKay quiver $Q_{\rho_{\mathrm{Nat}}}(\mathbb{T}_m)$ given by \eqref{eq:AR.MQ.Tm}.
\end{proposition}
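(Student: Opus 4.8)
The plan is to verify directly that the stated correspondence is a bijection on vertices which carries each defining arrow of \eqref{mq:P83k.Cm} to a defining arrow of \eqref{eq:AR.MQ.Tm}. Throughout I write $m=3r$ with $r$ odd (the content of $\gcd(m,6)=3$), so that both $m$ and $3m$ are odd and the half-integers below are integers; I abbreviate $a=\tfrac{3m-1}{2}$, $b=\tfrac{3m+1}{2}$ and $c=\tfrac{m-1}{2}$, so that the map reads $\tilde\alpha_q\mapsto(w,aq)$, $\tilde\varrho_q\mapsto(v,aq+b)$ and $\tilde\varsigma_q\mapsto(u,cq+1)$.

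First I would record the arithmetic facts that drive everything. Since $2a=3m-1\equiv-1\pmod{3m}$ and $2c=m-1\equiv-1\pmod m$, the residues $a$ and $c$ are units modulo $3m$ and modulo $m$ respectively (with inverse $-2$); hence $q\mapsto aq$ is a bijection of $\mathbb{Z}/3m\mathbb{Z}$ and $q\mapsto cq$ a bijection of $\mathbb{Z}/m\mathbb{Z}$. I also note $a+b=3m\equiv0\pmod{3m}$ and $b=a+1$, the relation $a-c=m$ (so that $a\equiv c+m\pmod{3m}$, $a\equiv c\pmod m$ and $b\equiv c+1\pmod m$), and finally
\begin{equation*}
am\equiv cm\equiv\frac{m(m-1)}{2}\equiv m\pmod{3m},
\end{equation*}
where the first congruence uses $m^2\equiv0\pmod{3m}$ and the last is exactly Lemma~\ref{lem:cong}.

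With these in hand, bijectivity on vertices is immediate: each of the three pieces is an affine map $q\mapsto aq+\mathrm{const}$ (mod $3m$) or $q\mapsto cq+1$ (mod $m$) with $a,c$ invertible, and the three families match the cardinalities $3m$, $3m$, $m$ of the $w$-, $v$- and $u$-vertices. It then remains to check the four arrow types of \eqref{mq:P83k.Cm}. For $\tilde\alpha_q\to\tilde\varrho_{q+1}$ the image is $(w,aq)\to(v,a(q+1)+b)=(v,aq)$ (using $a+b\equiv0$), a $w\to v$ arrow; for $\tilde\varrho_q\to\tilde\alpha_{q+1}$ the image is $(v,aq+b)\to(w,a(q+1))=(w,(aq+b)-1)$, a $v\to w$ arrow since $b=a+1$; for $\tilde\varrho_q\to\tilde\varsigma_{q+1}$ the image is $(v,aq+b)\to(u,c(q+1)+1)$, and reducing the $v$-index modulo $m$ gives $aq+b\equiv cq+(c+1)=c(q+1)+1\pmod m$, a $v\to u$ arrow. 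The three arrows out of $\tilde\varsigma_q$ map to $(u,cq+1)\to(v,aq+t\,am)$ for $t=0,1,2$, and using $am\equiv m\pmod{3m}$ together with $aq\equiv cq+mq\pmod{3m}$ these three targets form exactly the set $\{(v,cq),(v,cq+m),(v,cq+2m)\}$, i.e.\ the three $u\to v$ arrows emanating from $(u,cq+1)$.

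Finally, since both quivers are simple (at most one arrow between any ordered pair of vertices) and each has $12m$ arrows, the vertex bijection together with the arrow-to-arrow assignment just verified is automatically a bijection on arrows, hence a quiver isomorphism. I expect the main obstacle to be the purely arithmetic one of keeping the two moduli straight: the $u$-vertices are indexed modulo $m$ while the $v$- and $w$-vertices are indexed modulo $3m$, so the $v\to u$ check must be read modulo $m$, whereas the delicate identity $am\equiv m\pmod{3m}$—which is what makes the three $\varsigma$-arrows land on the three \emph{distinct} $u\to v$ arrows rather than collapsing—relies on Lemma~\ref{lem:cong} and on $m^2\equiv0\pmod{3m}$.
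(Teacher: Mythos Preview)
Your proof is correct and follows essentially the same approach as the paper's own proof: both verify the arrow correspondences directly, treat the three $\tilde\varsigma$-arrows by showing the two triples of targets coincide as sets, and invoke Lemma~\ref{lem:cong} at exactly the same point. Your version is in fact tidier---the abbreviations $a,b,c$ and the explicit checks of vertex bijectivity and of arrow bijectivity via simplicity and counting fill in details the paper leaves implicit.
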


\begin{proof}
First we prove that the following diagrams commute:
\begin{equation*}
\begin{tikzcd}
\tilde{\alpha}_{q} \arrow[r]\arrow[d]  & \tilde{\varrho}_{q+1}\arrow[d]\\
(w,i)\arrow[r] & (v,i)
\end{tikzcd}\quad\quad
\begin{tikzcd}
\tilde{\varrho}_{q} \arrow[r]\arrow[d]  & \tilde{\alpha}_{q+1}\arrow[d]\\
(v,i)\arrow[r] & (w,i-1)
\end{tikzcd}\quad\quad
\begin{tikzcd}
\tilde{\varrho}_{q} \arrow[r]\arrow[d]  & \tilde{\varsigma}_{q+1}\arrow[d]\\
(v,i)\arrow[r] & (u,i)
\end{tikzcd}
\end{equation*}
For the first one we have
\begin{align*}
\tilde{\alpha}_{q}&\mapsto (w,\frac{3m-1}{2}q)\to(v,\frac{3m-1}{2}q),\\
\tilde{\alpha}_{q}&\to\tilde{\varrho}_{q+1}\mapsto (v,\frac{3m-1}{2}(q+1)+\frac{3m+1}{2})=(v,\frac{3m-1}{2}q\mod 3m).
\end{align*}
For the second one we have
\begin{align*}
\tilde{\varrho}_{q}&\mapsto(v,\frac{3m-1}{2}q+\frac{3m+1}{2})\to(w,\frac{3m-1}{2}q+\frac{3m+1}{2}-1)=(w,\frac{3m-1}{2}(q+1)),\\
\tilde{\varrho}_{q}&\to \tilde{\alpha}_{q+1}\mapsto (w,\frac{3m-1}{2}(q+1)).
\end{align*}
For the third one we have
\begin{align*}
\tilde{\varrho}_{q}&\mapsto(v,\frac{3m-1}{2}q+\frac{3m+1}{2})\to(u,\frac{3m-1}{2}q+\frac{3m+1}{2}),\\
\tilde{\varrho}_{q}&\to\tilde{\varsigma}_{q+1}\mapsto(u,\frac{m-1}{2}(q+1)+1)=(u,\frac{3m-1}{2}q+\frac{3m+1}{2}).
\end{align*}
For the arrows
\begin{equation*}
\tilde{\varsigma}_{q}\Rrightarrow\{\tilde{\varrho}_{q+1},\tilde{\varrho}_{m+q+1},\tilde{\varrho}_{2m+q+1}\}
\end{equation*}
and
\begin{equation*}
(u,i)\Rrightarrow\{(v,i-1),(v,m+i-1),(v,2m+i-1)\}.
\end{equation*}
the diagrams not always commute one-by-one (only when $q\equiv 0\mod 3$), but we shall prove that one set of three vertices is sent to the other one.
We have
\begin{equation*}
\tilde{\varsigma}_{q}\mapsto (u,\frac{m-1}{2}q+1)\Rrightarrow\Bigl\{(v,\frac{m-1}{2}q),(v,\frac{m-1}{2}q+m),(v,\frac{m-1}{2}q+2m)\Bigr\}.
\end{equation*}
\begin{multline*}
\tilde{\varsigma}_{q}\Rrightarrow\{\tilde{\varrho}_{q+1},\tilde{\varrho}_{m+q+1},\tilde{\varrho}_{2m+q+1}\}\mapsto\\
\Bigl\{(v,\tfrac{3m-1}{2}(q+1)+\tfrac{3m+1}{2}),(v,\tfrac{3m-1}{2}(m+q+1)+\tfrac{3m+1}{2}),(v,\tfrac{3m-1}{2}(2m+q+1)+\tfrac{3m+1}{2})\Bigr\}
\end{multline*}
Thus, we need to prove that the set of vertices $\{(v,\frac{m-1}{2}q+jm)\}_{j=0}^{2}$ coincides with the set of vertices
$\{(v,\tfrac{3m-1}{2}(jm+q+1)+\tfrac{3m+1}{2})\}_{j=0}^{2}$.

We have $m=3^{k-1}l$, let $r=3^{k-2}l$, thus $m=3r$ with $r$ odd. Write $q=3s+t$ with $t=0,1,2$. Then we have
\begin{align*}
&\tfrac{3m-1}{2}(jm+3s+t+1)+\tfrac{3m+1}{2}=\tfrac{3m-1}{2}(jm+3s+t)+\tfrac{3m-1+3m+1}{2}\\
&=\tfrac{3m-1+m-m}{2}(jm+3s+t)+3m=\Bigl(\tfrac{m-1}{2}+m\Bigr)(jm+3s+t)+3m\\
%&=\Bigl(\tfrac{m-1}{2}+m\Bigr)(3s+t)+\tfrac{m-1}{2}jm+jm^2+3m\\
&=\tfrac{m-1}{2}(3s+t)+3ms+tm+\tfrac{m-1}{2}jm+jm^2+3m\\
\intertext{by Lemma~\ref{lem:cong} $\tfrac{m-1}{2}jm\equiv jm\mod 3m$ and since $m=3r$ we have $m^2=3mr$}
&\equiv\tfrac{m-1}{2}(3s+t)+(t+j)m\mod 3m,\quad j=0,1,2,\ t=0,1,2.
\end{align*}
Therefore the two sets of vertices coincide.
\end{proof}
\end{remark}

\paragraph{\textbf{Acknowledgments:}} The first author thanks Galatasaray Universitesi in Istanbul, for their kind hospitality during part of the writing of this article, and to José Antonio Arciniega-Nevarez and Agustín Romano-Velázquez for enlightening conversations.
The second author gratefully acknowledges the hospitality of the Institut des Hautes Études Scientifiques (IHES) in Paris during this work.


\begin{thebibliography}{10}

\bibitem{Arciniega-etal:CRMQSAPS}
Jos{\'e}~Antonio {Arciniega Nev{\'a}rez}, Jos{\'e}~Luis Cisneros-Molina, and
  Agust{\'\i}n {Romano Vel{\'a}zquez}.
\newblock Classification of indecomposable reflexive modules on quotient
  singularities through {A}tiyah--{P}atodi--{S}inger theory.
\newblock {\em arXiv e-prints arXiv:2504.21204}, 2025.

\bibitem{Artin-Verdier:RMORDP}
M.~Artin and J.-L. Verdier.
\newblock Reflexive modules over rational double points.
\newblock {\em Math. Ann.}, 270(1):79--82, 1985.

\bibitem{Aschenbrenner-etal:3MG}
Matthias Aschenbrenner, Stefan Friedl, and Henry Wilton.
\newblock {\em 3-manifold groups}.
\newblock EMS Series of Lectures in Mathematics. European Mathematical Society
  (EMS), Z{\"u}rich, 2015.

\bibitem{Aspinwall-Plesser:DBDTMC}
Paul~S. Aspinwall and M.~Ronen Plesser.
\newblock D-branes, discrete torsion and the {M}c{K}ay correspondence.
\newblock {\em J. High Energy Phys.}, (2):Paper 9, 26, 2001.

\bibitem{Auslander:RSASS}
Maurice Auslander.
\newblock Rational singularities and almost split sequences.
\newblock {\em Trans. Amer. Math. Soc.}, 293(2):511--531, 1986.

\bibitem{Auslander-Reiten:MQEDD}
Maurice Auslander and Idun Reiten.
\newblock Mc{K}ay quivers and extended {D}ynkin diagrams.
\newblock {\em Trans. Amer. Math. Soc.}, 293(1):293--301, 1986.

\bibitem{Bridgeland-etal:MCEC}
Tom Bridgeland, Alastair King, and Miles Reid.
\newblock The {M}c{K}ay correspondence as an equivalence of derived categories.
\newblock {\em J. Amer. Math. Soc.}, 14(3):535--554, 2001.

\bibitem{Brieskorn:RSKF}
Egbert Brieskorn.
\newblock Rationale {S}ingularit{\"a}ten komplexer {F}l{\"a}chen.
\newblock {\em Invent. Math.}, 4:336--358, 1967/68.

\bibitem{Browne:CPMQ}
Hazel Browne.
\newblock Connectivity properties of {M}c{K}ay quivers.
\newblock {\em Bull. Aust. Math. Soc.}, 103(2):182--194, 2021.

\bibitem{Cayley:Poly}
A.~Cayley.
\newblock Notes on polyhedra.
\newblock In {\em Collected Mathematical papers}, volume~V. 1866.

\bibitem{Cisneros:EITDOSG}
Jos{\'e}~Luis Cisneros-Molina.
\newblock The {$\eta$}-invariant of twisted {D}irac operators of
  {$S^3/\Gamma$}.
\newblock {\em Geom. Dedicata}, 84(1-3):207--228, 2001.

\bibitem{Cisneros-Romano:RMC}
Jos{\'e}~Luis {Cisneros-Molina} and Agust{\'\i}n {Romano Vel{\'a}zquez}.
\newblock A review on the {M}c{K}ay correspondence.
\newblock In {\em Handbook of geometry and topology of singularities. {VIII}}.
  Springer, Cham, 2025.

\bibitem{Conway-Smith:QOGAS}
John~H. Conway and Derek~A. Smith.
\newblock {\em On quaternions and octonions: their geometry, arithmetic, and
  symmetry}.
\newblock A K Peters, Ltd., Natick, MA, 2003.

\bibitem{Coxeter:RCP}
H.~S.~M. Coxeter.
\newblock {\em Regular complex polytopes}.
\newblock Cambridge University Press, Cambridge, second edition, 1991.

\bibitem{Crawley-Holland:NCDKS}
William Crawley-Boevey and Martin~P. Holland.
\newblock Noncommutative deformations of {K}leinian singularities.
\newblock {\em Duke Math. J.}, 92(3):605--635, 1998.

\bibitem{DuVal:HQR}
Patrick {Du Val}.
\newblock {\em Homographies, quaternions and rotations}.
\newblock Oxford Mathematical Monographs. Clarendon Press, Oxford, 1964.

\bibitem{Esnault:RMQSS}
H{\'e}l{\`e}ne Esnault.
\newblock Reflexive modules on quotient surface singularities.
\newblock {\em J. Reine Angew. Math.}, 362:63--71, 1985.

\bibitem{Esnault-Knorrer:RMORDP}
H{\'e}l{\`e}ne Esnault and Horst Kn{\"o}rrer.
\newblock Reflexive modules over rational double points.
\newblock {\em Math. Ann.}, 272(4):545--548, 1985.

\bibitem{Feng-etal:DTCGQD}
Bo~Feng, Amihay Hanany, Yang-Hui He, and Nikolaos Prezas.
\newblock Discrete torsion, covering groups and quiver diagrams.
\newblock {\em J. High Energy Phys.}, (4):Paper 37, 27, 2001.

\bibitem{Feng-etal:DTNAOSM}
Bo~Feng, Amihay Hanany, Yang-Hui He, and Nikolaos Prezas.
\newblock Discrete torsion, non-abelian orbifolds and the {S}chur multiplier.
\newblock {\em J. High Energy Phys.}, (1):Paper 33, 25, 2001.

\bibitem{FernandezdB-Romano:RMNGSSDM}
Javier {Fern{\'a}ndez de Bobadilla} and Agust{\'\i}n {Romano Vel{\'a}zquez}.
\newblock Reflexive modules on normal {G}orenstein {S}tein surfaces, their
  deformations and moduli.
\newblock {\em Mem. Amer. Math. Soc.}, 298(1493):v+94, 2024.

\bibitem{Gonzalez-Sprinberg-Verdier:McKay}
G.~Gonzalez-Sprinberg and J.-L. Verdier.
\newblock Construction g{\'e}om{\'e}trique de la correspondance de {M}c{K}ay.
\newblock {\em Ann. Sci. {\'E}cole Norm. Sup. (4)}, 16(3):409--449 (1984),
  1983.

\bibitem{Goursat:SOD}
Edouard Goursat.
\newblock Sur les substitutions orthogonales et les divisions
  r{\'e}guli{\`e}res de l'espace.
\newblock {\em Ann. Sci. {\'E}cole Norm. Sup. (3)}, 6:9--102, 1889.

\bibitem{Hanany-He:NAFGT}
Amihay Hanany and Yang-Hui He.
\newblock Non-abelian finite gauge theories.
\newblock {\em J. High Energy Phys.}, (2):Paper 13, 31, 1999.

\bibitem{Ito-Nakamura:HSSS}
Y.~Ito and I.~Nakamura.
\newblock Hilbert schemes and simple singularities.
\newblock In {\em New trends in algebraic geometry ({W}arwick, 1996)}, volume
  264 of {\em London Math. Soc. Lecture Note Ser.}, pages 151--233. Cambridge
  Univ. Press, Cambridge, 1999.

\bibitem{Ito-Nakamura:MCHS}
Yukari Ito and Iku Nakamura.
\newblock Mc{K}ay correspondence and {H}ilbert schemes.
\newblock {\em Proc. Japan Acad. Ser. A Math. Sci.}, 72(7):135--138, 1996.

\bibitem{Ito-Reid:MCFGSL3C}
Yukari Ito and Miles Reid.
\newblock The {M}c{K}ay correspondence for finite subgroups of {${\rm
  SL}(3,\mathbb{C})$}.
\newblock In {\em Higher-dimensional complex varieties ({T}rento, 1994)}, pages
  221--240. de Gruyter, Berlin, 1996.

\bibitem{James-Liebeck:RCG}
Gordon James and Martin Liebeck.
\newblock {\em Representations and Characters of groups}.
\newblock Cambridge University Press, second edition, 2001.

\bibitem{Khan:RMMES}
Constantin~P. Kahn.
\newblock Reflexive modules on minimally elliptic singularities.
\newblock {\em Math. Ann.}, 285(1):141--160, 1989.

\bibitem{Kapranov-Vasserot:KSDCHA}
M.~Kapranov and E.~Vasserot.
\newblock Kleinian singularities, derived categories and {H}all algebras.
\newblock {\em Math. Ann.}, 316(3):565--576, 2000.

\bibitem{Klein:Icosahedron}
Felix Klein.
\newblock {\em Lectures on the icosahedron and the solution of equations of the
  fifth degree}.
\newblock Dover, 1956.
\newblock English translation from the 1884 original.

\bibitem{Knorrer:GRRRDP}
Horst Kn{\"o}rrer.
\newblock Group representations and the resolution of rational double points.
\newblock In {\em Finite groups---coming of age ({M}ontreal, {Q}ue., 1982)},
  volume~45 of {\em Contemp. Math.}, pages 175--222. Amer. Math. Soc.,
  Providence, RI, 1985.

\bibitem{Lamotke:RSIS}
Klaus Lamotke.
\newblock {\em Regular solids and isolated singularities}.
\newblock Advanced Lectures in Mathematics. Friedr. Vieweg \& Sohn,
  Braunschweig, 1986.

\bibitem{Lawrence-etal:CFTFD}
Albion Lawrence, Nikita Nekrasov, and Cumrun Vafa.
\newblock On conformal field theories in four dimensions.
\newblock {\em Nuclear Phys. B}, 533(1-3):199--209, 1998.

\bibitem{Lindh:McKayCorr}
Max Lindh.
\newblock An introduction to the {M}c{K}ay correspondence : Master thesis in
  physics.
\newblock Master's thesis, Uppsala University, High Energy Physics, 2018.

\bibitem{McKay:GSFG}
John McKay.
\newblock Graphs, singularities, and finite groups.
\newblock In {\em The {S}anta {C}ruz {C}onference on {F}inite {G}roups},
  volume~37 of {\em Proc. Sympos. Pure Math.}, pages 183--186. Amer. Math.
  Soc., Providence, R.I., 1980.

\bibitem{Milnor:GASNWFP}
John Milnor.
\newblock Groups which act on {$S^n$} without fixed points.
\newblock {\em Amer. J. Math.}, 79:623--630, 1957.

\bibitem{Pe:ONPQSS}
Mar{\'\i}a {Pe Pereira}.
\newblock {\em On {N}ash Problem for Quotient Surface Singularities}.
\newblock PhD thesis, Universidad Complutense de Madrid, 2011.

\bibitem{Prill:LCQCMDG}
David Prill.
\newblock Local classification of quotients of complex manifolds by
  discontinuous groups.
\newblock {\em Duke Math. J.}, 34:375--386, 1967.

\bibitem{Reid:CM}
Miles Reid.
\newblock La correspondance de {M}c{K}ay.
\newblock Number 276, pages 53--72. 2002.
\newblock S{\'e}minaire Bourbaki, Vol. 1999/2000.

\bibitem{Riemenschneide:IEUGL2C}
Oswald Riemenschneider.
\newblock Die {I}nvarianten der endlichen {U}ntergruppen von {${\rm GL}(2,{\bf
  C})$}.
\newblock {\em Math. Z.}, 153(1):37--50, 1977.

\bibitem{Slodowy:SSSAG}
Peter Slodowy.
\newblock {\em Simple singularities and simple algebraic groups}, volume 815 of
  {\em Lecture Notes in Mathematics}.
\newblock Springer, Berlin, 1980.

\bibitem{Steinberg:FSGSU2DDACE}
Robert Steinberg.
\newblock Finite subgroups of {${\rm SU}_2$}, {D}ynkin diagrams and affine
  {C}oxeter elements.
\newblock {\em Pacific J. Math.}, 118(2):587--598, 1985.

\bibitem{Wunram:RMQSS}
J{\"u}rgen Wunram.
\newblock Reflexive modules on quotient surface singularities.
\newblock {\em Math. Ann.}, 279(4):583--598, 1988.

\end{thebibliography}
\end{document}